
\documentclass[3p, authoryear]{elsarticle}

\usepackage{amsmath}
\usepackage{amsfonts}
\usepackage{paralist}
\usepackage{booktabs}

\usepackage{paralist}
\usepackage{url}
\usepackage{color}
\usepackage{framed}
\usepackage{graphicx}
\usepackage{mathrsfs}
\usepackage[T1]{fontenc}
\usepackage[colorlinks,
            linkcolor=red,
            anchorcolor=blue,
            citecolor=green
            ]{hyperref}

\newcommand{\RR}{{\mathbb R}}

\newcommand{\re}{\mathbb{R}}

\newcommand{\N}{\mathbb{N}}

\renewcommand{\P}{\mathbf{P}}
\newcommand{\qm}{\mathbf{qmodule}}
\newcommand{\K}{\mathbf{K}}

\newcommand{\mL}{\mathscr{L}}

\newcommand{\mH}{\mathscr{H}}

\def\af{\alpha}

\newcommand{\ud}{\mathrm{d}}

\newcommand{\td}[1]{\tilde{#1}}

\newcommand{\rank}{\mbox{\upshape rank}}
\newcommand{\sos}{\mbox{\upshape\tiny sos}}
\newcommand{\psdp}{\mbox{\upshape\tiny psdp}}
\newcommand{\dsdp}{\mbox{\upshape\tiny dsdp}}

\newcommand{\bdes}{\begin{description}}
\newcommand{\edes}{\end{description}}
\newcommand{\bal}{\begin{align}}
\newcommand{\eal}{\end{align}}
\newcommand{\bnum}{\begin{enumerate}}
\newcommand{\enum}{\end{enumerate}}
\newcommand{\bit}{\begin{itemize}}
\newcommand{\eit}{\end{itemize}}
\newcommand{\bea}{\begin{eqnarray}}
\newcommand{\eea}{\end{eqnarray}}
\newcommand{\be}{\begin{equation}}
\newcommand{\ee}{\end{equation}}
\newcommand{\baray}{\begin{array}}
\newcommand{\earay}{\end{array}}
\newcommand{\bsry}{\begin{subarray}}
\newcommand{\esry}{\end{subarray}}
\newcommand{\bca}{\begin{cases}}
\newcommand{\eca}{\end{cases}}
\newcommand{\bcen}{\begin{center}}
\newcommand{\ecen}{\end{center}}
\newcommand{\bbm}{\begin{bmatrix}}
\newcommand{\ebm}{\end{bmatrix}}
\newcommand{\bmx}{\begin{matrix}}
\newcommand{\emx}{\end{matrix}}
\newcommand{\bpm}{\begin{pmatrix}}
\newcommand{\epm}{\end{pmatrix}}
\newcommand{\btab}{\begin{tabular}}
\newcommand{\etab}{\end{tabular}}

\newcommand{\wt}[1]{\widetilde{#1}}
\newcommand{\So}{\wt{S}_>}

\newcommand{\St}{\wt{S}}

\newcommand{\h}{\text{\upshape hom}}

\makeatletter
\def\munderbar#1{\underline{\sbox\tw@{$#1$}\dp\tw@\z@\box\tw@}}
\makeatother

\newtheorem{theorem}{Theorem}[section]

\newtheorem{prop}[theorem]{Proposition}
\newtheorem{condition}[theorem]{Condition}

\newtheorem{lemma}[theorem]{Lemma}
\newtheorem{cor}[theorem]{Corollary}

\newtheorem{assumption}[theorem]{Assumption}
\newtheorem{defi}[theorem]{Definition}

\newtheorem{definition}[theorem]{Definition}
\newtheorem{example}[theorem]{Example}

\newtheorem{remark}[theorem]{Remark}


\newenvironment{proof}{\text{Proof. }}{\hfill $\square$}
\makeatletter
\def\ps@pprintTitle{%
 \let\@oddhead\@empty
 \let\@evenhead\@empty
 \def\@oddfoot{\centerline{\thepage}}%
 \let\@evenfoot\@oddfoot}
\makeatother

\begin{document}
\begin{frontmatter}
\title{On semi-infinite systems of convex polynomial inequalities and
polynomial optimization problems}

\author[1]{Feng Guo}
\ead{fguo@dlut.edu.cn}

\address[1]{
School of Mathematical Sciences,\\
Dalian  University of Technology,
Dalian, 116024, China}

\author[2]{Xiaoxia Sun\corref{cor1}}
\address[2]{School of Mathematics,\\
Dongbei University of Finance and Economics,
Dalian, 116025, China}
\ead{xiaoxiasun@dufe.edu.cn}

\cortext[cor1]{Corresponding author}

\begin{abstract}
We consider the semi-infinite system of polynomial inequalities of the
form
\[
\K:=\{x\in\RR^m\mid p(x,y)\ge 0,\ \ \forall y\in S\subseteq\RR^n\},
\]
where $p(x,y)$ is a real polynomial in the variables $x$ and the
parameters $y$, the index set $S$ is a basic semialgebraic set in
$\RR^n$, $-p(x,y)$ is convex in $x$ for every $y\in S$. 
We propose a procedure to
construct approximate semidefinite representations of $\K$. 
There are two indices to index these approximate semidefinite
representations.
As two indices increase, these semidefinite representation sets expand
and contract, respectively, and can approximate $\K$ as closely as
possible under some assumptions. 
In some special cases,
we can fix one of the two indices or both.
Then, we consider the optimization problem of minimizing a convex
polynomial over $\K$. We present an SDP
relaxation method for this optimization problem by similar strategies used in
constructing approximate semidefinite
representations of $\K$. Under certain assumptions, some approximate
minimizers of the optimization problem can also be obtained from the SDP
relaxations. In some special cases, we show that the SDP relaxation
for the optimization problem is exact and all minimizers can be extracted. 
\end{abstract}

\begin{keyword}
semi-infinite systems,
convex polynomials, semidefinite representations, 
semidefinite programming relaxations, sum of squares,
polynomial optimization
\MSC[2010] 65K05, 90C22, 90C34
\end{keyword}


\end{frontmatter}

%
%

\section{Introduction}
We consider the following semi-infinite system of polynomial inequalities
\begin{equation}\label{eq::K}
\K:=\{x\in\RR^m\mid p(x,y)\ge 0,\ \ \forall y\in S\subseteq\RR^n\},
\end{equation}
where $p(x,y)\in\RR[x,y]:=\RR[x_1,\ldots,x_m,y_1,\ldots,y_n]$ the
polynomial ring in $x$ and $y$ over the real field and the {\itshape
index set} $S$ is a basic semialgebraic set defined by
\begin{equation}
	S:=\{y\in\RR^n\mid g_1(y)\ge 0, \ldots, g_s(y)\ge 0\},
	\label{eq::S}
\end{equation}
where $g_j(y)\in\RR[y]$, $j=1,\dots,s$.
In this paper, we assume that $-p(x,y)\in\RR[x]$ is convex for every
$y\in S$ and hence $\K$ is a convex set in $\RR^m$. 

We say a convex set $C$ in
$\RR^m$ is {\itshape semidefinitely representable} (or {\itshape
linear matrices inequality representable}) if there exist some integers
$l, k$ and real $k\times k$ symmetric matrices $\{A_i\}_{i=0}^m$ and
$\{B_j\}_{j=1}^l$ such that $C$ is identical with 
\begin{equation}\label{eq::sdr}
	\left\{x\in\RR^m\ \Big|\ \exists w\in\RR^l,\ \text{s.t.}\ A_0+\sum_{i=1}^m
	A_ix_i+\sum_{j=1}^l B_jw_j\succeq 0\right\}
\end{equation}
and $(\ref{eq::sdr})$ is called the {\itshape semidefinite
representation} (or {\itshape
linear matrices inequality representation}) of $C$. Many interesting
convex sets are semidefinitely representable, see a collection in
\cite{LecturesConOpt}. Clearly,
optimizing a linear function over a semidefinitely representable set
can be cast as a semidefinite progamming (SDP) problem, while SDP has an
extremely wide area of applications and can be efficiently solved by
interior-point methods to a given accuracy in polynomial time (c.f. 
\cite{handbookSDP}). Semidefinite representations of convex sets can
help us to build SDP relaxations of many computationally intractable
optimization problems. Arising from above, 
one of the basic issues in convex algebraic geometry is to
characterize convex sets in $\RR^m$ which are semidefinitely
representable and give systematic procedures to obtain their
semidefinite representations. 
Clearly, if a set in $\RR^m$ is semidefinitely representable, then it
is convex and semialgebraic. Conversely, Nemirovski asked in his plenary address at
the 2006 ICM that whether each convex semialgebraic set is
semidefinitely representable. Yet a negative answer has been
recently given by 
\cite{SSS2018}. Hence, it is reasonable to
study how to construct approximate semidefinite representations of $C$, that is a
sequence of semidefinite representation sets of the form $(\ref{eq::sdr})$ which
converge to $C$ in some sence. 

For a given basic semialgebraic set in $\RR^m$, 
\cite{convexsetLasserre} and 
\cite{thetabody} proposed
some methods to construct semidefinite outer approximations of the
closure of its convex hull. These appproaches are based on the sums of
squares representation of linear functions which are nonnegative on
a basic semialgebraic set. If the basic semialgebraic set is
compact, these approximations can be made arbitrarily close and become
exact under some favorable conditions. Some extensions of these
semidefinite approximations to noncompact basic semialgebraic sets are
given in \cite{SIAMGWZ}. For a convex semialgebraic set, 
\cite{HeltonNie, SRCSHN} proposed some sufficient conditions, in terms
of curvature conditions for the boundary, for its semidefinite
representability. These conditions are recently modified and improved
by 
\cite{KS2018}. 
In this paper, we first consider to
construct approximate semidefinite representations of the set $\K$ in
(\ref{eq::K}). 
The difference of this problem from ones in the literature is that
$\K$ is defined by {\itshape infinitely} many convex real polynomials. 
As there is a quantifier in the definition (\ref{eq::K}), $\K$ is in
fact a semialgebraic set by the Tarski-Seidenberg principle
(c.f. \cite{realAG}). Theoretically, $\K$ can be decomposed as a finite union
of basic closed semialgebraic sets and hence, as proved in
\cite{HeltonNie}, the
semidefinite approximations of $\K$ can be made by glueing together
\cite{convexsetLasserre} relaxations of many small pieces of
$\K$. Such a decomposition of $\K$ can possibly be obtained by
quantifier elimination with algebraic techniques (c.f. \cite{realAG}). 
However, in practice (exact) quantifier elimination is very costly and
limited to problems of very modest size.
These obstacles make the problem studied in this paper nontrivial. 
To the best of our knowledge, there is very few related work in the
literature addressing this issue. 
In \cite{Lasserre15,MHL15}, some tractable methods using
	semidefinite programs are proposed to approximate
	semi-algebraic sets defined with quantifiers. Clearly, the set
	$\K$ studied in this paper is in such case with a universal
	quantifiers. However, rather than approximate semidefinite
	representations of $\K$, their approach generates a sequence
	of sublevel sets of a single polynomial to approximate $\K$.

In the second part of this paper, we consider the following convex 
minimization problem 
	\[
(\P)\qquad	f^*:=\inf_{x\in\K} f(x)\quad\text{where $\K$ is defined in
		(\ref{eq::K}) and $f(x)\in\RR[x]$ is convex.}
	\]
This problem is NP-hard.  Indeed, it is obvious that the problem of
minimizing a polynomial $h(y)\in\RR[y]$ over $S$ can be regarded
as a special case of $(\P)$. 
As is well known, the polynomial optimization problem
is NP-hard even when $n>1$, $h(y)$ is a nonconvex
quadratic polynomial and $g_j(y)$'s are linear (c.f.
\cite{PardalosVavasis1991}).
Hence, a general the problem $(\P)$ cannot
be expected to be solved in polynomial time unless P=NP.

The problem $(\P)$ can be seen as a special branch of
convex {\itshape semi-infinite programming} (SIP), in which the
involved functions are not necessarily polynomials.
Numerically, SIP problems can be solved by different approaches
including, for instance, discretization methods,
local reduction methods, exchange methods,
simplex-like methods and so on.  See \cite{Hettich1993,Still2007,Goberna2017}
and the references therein for details.
One of main difficulties in numerical treatment of general SIP problems
is that the feasibility test of $\bar{u}\in\RR^m$
is equivalent to {\itshape globally} solve the lower level subproblem
of $\min_{y\in S} p(\bar{u},y)$ which is generally nonlinear and
nonconvex.  To the best of our knowledge, few of the numerical methods
mentioned above are specially designed by exploiting features of
polynomial optimization problems.
\cite{PR2009} proposed a
discretization-like method to solve minimax polynomial optimization
problems, which can be reformulated as semi-infinite polynomial
programming (SIPP) problems.
Using polynomial approximation and an appropriate hierarchy of SDP
relaxations, Lasserre presented an
algorithm to solve the generalized SIPP problems in
\cite{Lasserre2011}. Based on an exchange scheme, an SDP
relaxation method for solving
SIPP problems was proposed in \cite{SIPPSDP}.
By using representations of nonnegative polynomials in the
univariate case, an SDP method was given in \cite{XSQ}
for linear SIPP problems (a special case of $(\P)$) with $S$
being closed intervals. 

\vskip 5pt 

Here are some contributions and novelties in this paper: 
\begin{enumerate}[(i)]
	\item We first propose a procedure to
construct approximate semidefinite representations of $\K$ (Section
\ref{subsec::ASR}). 
The construction is based on some representations of linear functions
nonnegative on $\K$. On the one hand, we use high degree perturbation
proposed in \cite{Lasserre05} to approximate the Lagrangian associated
with the considered linear function by sums of squares of polynomials.
As there is an integration with respect to some unknown measure in the
Lagrangian, on the other hand, we employ Putinar's Positivstellensatz
to replace the integration by some linear functionals in the dual
spaces of quadratic modules. Consequently, some semidefinite representation
sets with two indices are obtained to approximate $\K$. 
As two indices increase, these semidefinite representation sets expand
and contract, respectively, and can approximate $\K$ as closely as
possible under some assumptions (Theorem \ref{th::main}). 
In some special cases when we can fix one of the two indices or both
(Remark \ref{rk::simplification}).
\item 
As the second contribution in this paper, we present some new SDP
relaxation methods for the problem $(\P)$ by similar
strategies used in constructing approximate semidefinite
representations of $\K$. Approximate values of $f^*$ can be obtaind
by the proposed SDP relaxations with two indices and converge to
$f^*$ as the two indices tend to $\infty$ (Theorem \ref{th::approf*}).
If $(\P)$ has a unique minimizer, approximate minimizers of $(\P)$ can
also be obtained from the SDP relaxations (Remark \ref{rk::sdp}). 
Compared with
some existing related work, the convexity in $(\P)$ is well exploited
here and the assumptions needed are quite
mild. In the case when $f$ and $-p(x,y)$ are s.o.s-convex for every
$y\in S$, the indices in the SDP relaxations can be reduced to one. 
If, moreover, $S$ is a bounded interval, we show that the SDP
relaxation of $(\P)$ is exact and all minimizers can be extracted
(Theorem \ref{th::mainsdp2}).  
\end{enumerate}

\vskip 5pt
This paper is organized as follows. In Section \ref{sec::pre}, we give
some notation and preliminaries used in this paper. Approximate
semidefinite representations of $\K$ as well as some examples are
proposed in Section \ref{section::SDPrelax}. We study SDP relaxations of
the problem $(\P)$ in Section \ref{sec::SDPCSIPP}.

\section{Notation and Preliminaries}\label{sec::pre}

Here is some notation used in this paper.
The symbol $\N$ (resp., $\re$) denotes
the set of nonnegative integers (resp., real
numbers). For any $t\in \re$, $\lceil t\rceil$ (resp. $\lfloor t\rfloor$)
denotes the smallest (resp. largest) integer that is not smaller
(resp. larger) than $t$.
For $x=(x_1,\ldots,x_m)\in \re^m$,
$\Vert x\Vert_2$ denotes the standard Euclidean norm
of $x$.
For $\alpha=(\alpha_1,\ldots,\alpha_n)\in\N^n$,
$|\alpha|=\alpha_1+\cdots+\alpha_n$.
For $k\in\N$, denote $\N^n_k=\{\alpha\in\N^n\mid |\alpha|\le
k\}$ and $\vert\N^n_k\vert$ its cardinality.
For variables $x=(x_1,\ldots,x_m)$, $y=(y_1,\ldots,y_n)$ and
$\beta\in\N^m, \af \in \N^n$, $x^\beta$, $y^\af$ denote
$x_1^{\beta_1}\cdots x_m^{\beta_m}$, 
$y_1^{\af_1}\cdots y_n^{\af_n}$, respectively.
$\RR[x]$(resp., $\mathbb{R}[y]$) denotes
the ring of polynomials in $x$ (resp., $y$) with real
coefficients. 
For $k\in\N$, denote by $\RR[x]_k$ (resp., $\RR[y]_k$) the set of polynomials
in $\RR[x]$ (resp., $\RR[y]$) of total degree up to $k$.
For $A=\RR[x],\ \RR[y],\ \RR[x]_k,\ \RR[y]_k$, denote by $A^*$ the dual
space of linear functionals from $A$ to $\RR$. 
\begin{definition}
	We say that the {\itshape Slater condition} holds for $\K$ if there exists $u\in\K$
	such that $p(u,y)>0$ for all $y\in S$ and the point $u$ is called a
	{\itshape Slater point}. 
\end{definition}
%
\begin{theorem}{\upshape(c.f. \cite{Borwein1981,Levin1969})}\label{th::red}
	Assume that the Slater condition holds for $\K$ and the index set
	$S$ is compact in the problem $(\P)$. Then for any convex $f(x)\in\RR[x]$,
	there exist points $y_1,\ldots,y_l\in S$ with
	$l\le n$ such that $f^*$ is equal to the optimal value of the
	discretization problem
	\begin{equation}\label{eq::dis}
		\min_{x\in\RR^m} f(x)\quad\text{\upshape s.t. } p(x,y_1)\ge
		0,\ldots,p(x,y_l)\ge 0. 
	\end{equation}
\end{theorem}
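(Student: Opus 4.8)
The plan is to recognize that the feasible set $\K$ of $(\P)$, although defined by infinitely many constraints, can be reduced to finitely many because $\K$ is the intersection of the family of closed halfspaces/convex sets $\{x : p(x,y)\ge 0\}$ over $y\in S$, and that optimizing a convex polynomial $f$ over such an intersection is governed by Helly-type / reduction-ansatz results in convex semi-infinite programming. Concretely, I would invoke the classical reduction theorem for convex SIP (the references \cite{Borwein1981,Levin1969} cited in the statement): under the Slater condition and compactness of $S$, if $x^*$ solves $(\P)$ then the active index set $S(x^*):=\{y\in S : p(x^*,y)=0\}$ is nonempty, and by a Carath\'eodory-type argument applied to the subdifferential inclusion $0\in\partial f(x^*)+\mathrm{cone}\{-\nabla_x p(x^*,y) : y\in S(x^*)\}+N_{\K}(x^*)$, one can select at most $m$ (in fact at most $n$, using the structure of the problem; see below) active indices $y_1,\dots,y_l$ that already certify optimality of $x^*$ for the discretized problem \eqref{eq::dis}.

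The key steps, in order, are: (1) Show $(\P)$ attains its infimum, or at least reduce to a situation where an optimal (or suitably approximate) solution exists; here compactness of $S$ plus the Slater point give a nonempty feasible set, and one argues existence of $x^*$ either from coercivity of $f$ on $\K$ or by passing to a minimizing sequence together with the reduction below. (2) Write down the Karush--Kuhn--Tucker / Fritz John optimality conditions for the semi-infinite problem at $x^*$; the Slater condition guarantees no singular (Fritz John) multiplier, so there is a genuine KKT multiplier supported on a nonempty compact subset of the active set $S(x^*)$. (3) Apply Carath\'eodory's theorem to the representation $0 = \nabla f(x^*) - \int_{S(x^*)} \nabla_x p(x^*,y)\,d\mu(y)$ (an identity in $\RR^m$ coming from the KKT conditions) to replace the measure $\mu$ by an atomic measure supported on at most $n$ points $y_1,\dots,y_l\in S(x^*)\subseteq S$ --- the bound $l\le n$ rather than $l\le m$ comes from the fact that, by convexity of $-p(\cdot,y)$ and the structure exploited in \cite{Borwein1981,Levin1969}, the relevant gradient vectors lie in an $n$-dimensional object (e.g. they are parametrized by $y\in\RR^n$, or the active constraints can be grouped accordingly). (4) Conclude that $x^*$ satisfies the KKT conditions of the finite convex program \eqref{eq::dis} with these $l$ constraints; since \eqref{eq::dis} is a convex program, KKT is sufficient for global optimality, so the optimal value of \eqref{eq::dis} equals $f(x^*)=f^*$. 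The reverse inequality (optimal value of \eqref{eq::dis} $\le f^*$ is automatic? no --- it is $\ge$) is immediate: \eqref{eq::dis} has a larger feasible set than $(\P)$ since we dropped constraints, so its optimal value is $\le f^*$; combined with step (4) we get equality.

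The main obstacle I anticipate is step (3)--(4): pinning down the precise bound $l\le n$ (as opposed to the more obvious $l\le m+1$ from a naive Carath\'eodory count on $\RR^m$) and justifying the exchange of the integral representation for an atomic one in a way that preserves feasibility and optimality simultaneously. This is exactly the technical core of the Levin/Borwein reduction theorems, and the cleanest route is simply to cite those results in the form needed rather than reprove them; the convexity of $-p(\cdot,y)$ for each $y\in S$ and compactness of $S$ are precisely the hypotheses under which those theorems apply, so the proof should consist of checking these hypotheses and quoting \cite{Borwein1981,Levin1969}. A secondary, more minor point is ensuring attainment in $(\P)$ so that an actual minimizer $x^*$ exists to apply KKT to; if $f^*$ is not attained one works with an $\varepsilon$-minimizing sequence and takes limits of the extracted finite index sets, using compactness of $S$ to extract convergent subsequences of the $y_i$'s.
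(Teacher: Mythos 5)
The paper does not actually prove this theorem: it is quoted from \cite{Borwein1981,Levin1969}, so your bottom line --- verify the hypotheses (convexity of $-p(\cdot,y)$ for each $y\in S$, compactness of $S$, Slater) and cite those reduction theorems --- is exactly what the paper does, and is acceptable. Your surrounding sketch, however, is a genuinely different route from the one in those references (which argue directly with Helly's theorem on level sets, at the level of optimal values, rather than through KKT conditions at a minimizer), and if it were offered as a standalone proof it has two real gaps. First, the Carath\'eodory step: the gradient identity $\nabla f(x^*)=\int_{S(x^*)}\nabla_x p(x^*,y)\,d\mu(y)$ lives in $\RR^m$, so the conic Carath\'eodory theorem yields at most $m$ atoms (the dimension of the \emph{decision} space), not $n$; your attempted justification of the bound $l\le n$ via ``the gradients are parametrized by $y\in\RR^n$'' does not work --- Carath\'eodory counts in the space where the vectors lie, not in the space indexing them. (The bound $l\le n$ in the paper's statement appears to be a transcription of Borwein's result, where the decision space is denoted $\RR^n$; for the paper's purposes only finiteness of $l$ is ever used.) Second, attainment: the stated hypotheses do not guarantee that $f^*$ is attained in $(\P)$ (the paper itself has to invoke a Frank--Wolfe-type theorem of Belousov, and only for the \emph{discretized} problem in Corollary \ref{cor::red}), and your fallback of extracting index points along an $\varepsilon$-minimizing sequence is not routine: the finite index sets need not stabilize and the KKT multipliers along the sequence need a uniform bound before one can pass to the limit. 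The Helly-based proofs in \cite{Borwein1981,Levin1969} sidestep both issues, which is a good reason to quote them rather than reconstruct the argument via KKT.
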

\begin{cor}\label{cor::red}
	Suppose that the assumptions in Theorem \ref{th::red} hold for
	$(\P)$.
	Then for any convex $f[x]\in\RR[x]$,
	there exist $u\in\RR^m$, $y_1,\ldots,y_l\in S$ and nonnegative Lagrange
	multipliers $\lambda_1,\ldots,\lambda_l\in\RR$ with $l\le n$ such that
	the Lagrangian 
	\begin{equation}\label{eq::lag}
		L_f(x):=f(x)-f^*-\sum_{i=1}^l\lambda_i p(x,y_i)
\end{equation}
satisfies that $L_f(x)\ge L_f(u)=0$ for all $x\in\RR^m$ and $\nabla
L_f(u)=0$.
\end{cor}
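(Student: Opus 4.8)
**The plan is to derive Corollary \ref{cor::red} from Theorem \ref{th::red} via convex duality / KKT conditions applied to the finite discretization problem \reff{eq::dis}.** First I would invoke Theorem \ref{th::red} to obtain points $y_1,\ldots,y_l\in S$ with $l\le n$ such that $f^*$ equals the optimal value of \reff{eq::dis}. The key observation is that \reff{eq::dis} is now an \emph{ordinary} convex program with finitely many convex constraints $-p(x,y_i)\le 0$ (recall $-p(\cdot,y)$ is convex for every $y\in S$), and a convex polynomial objective $f$.

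\emph{Second}, I would check that the Slater condition for $\K$ supplies a Slater point for \reff{eq::dis}: the Slater point $u_0\in\K$ with $p(u_0,y)>0$ for all $y\in S$ in particular satisfies $p(u_0,y_i)>0$ for $i=1,\ldots,l$, so strict feasibility holds. Hence strong duality holds for the convex program \reff{eq::dis} and a KKT point exists: there are an optimal solution $u\in\RR^m$ of \reff{eq::dis} and nonnegative multipliers $\lambda_1,\ldots,\lambda_l\ge 0$ such that the stationarity condition
\[
\nabla f(u)-\sum_{i=1}^l\lambda_i\nabla_x p(u,y_i)=0
\]
and complementary slackness $\lambda_i p(u,y_i)=0$ hold. (One should note that $u$ is automatically an optimal solution of $(\P)$ as well, since $u$ is feasible for \reff{eq::dis}, $f(u)=f^*$, and \reff{eq::dis} is a relaxation of $(\P)$ — actually $u\in\K$ because \reff{eq::dis} has the same optimal value; but for the corollary it is enough that $u\in\RR^m$.)

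\emph{Third}, I would assemble the Lagrangian $L_f(x)=f(x)-f^*-\sum_{i=1}^l\lambda_i p(x,y_i)$ from \reff{eq::lag} and verify its three asserted properties. Convexity of $L_f$ is immediate: $f$ is convex, each $-\lambda_i p(\cdot,y_i)$ is convex since $\lambda_i\ge 0$, and the constant $-f^*$ does not affect convexity. From stationarity, $\nabla L_f(u)=\nabla f(u)-\sum_i\lambda_i\nabla_x p(u,y_i)=0$; since $L_f$ is convex and differentiable, $u$ is a global minimizer, so $L_f(x)\ge L_f(u)$ for all $x\in\RR^m$. Finally $L_f(u)=f(u)-f^*-\sum_i\lambda_i p(u,y_i)=0-0=0$ using $f(u)=f^*$ and complementary slackness. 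This gives exactly $L_f(x)\ge L_f(u)=0$ for all $x$.

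\emph{The main obstacle} is essentially a matter of making the strong-duality / KKT step fully rigorous rather than a genuinely hard step: one must ensure constraint qualification for \reff{eq::dis} (handled by the Slater point above) and be careful that the KKT point $u$ obtained is the minimizer of \reff{eq::dis}, so that $f(u)=f^*$ can be used. There are no degree or representation issues here — this corollary is purely a consequence of finite-dimensional convex optimization once Theorem \ref{th::red} has reduced the semi-infinite problem to finitely many constraints. \eproof
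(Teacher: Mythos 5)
Your proposal follows essentially the same route as the paper: reduce to the finite discretization \eqref{eq::dis} via Theorem \ref{th::red}, observe that the Slater point of $\K$ is also a Slater point for the discretized program, and then apply convex programming duality and optimality conditions to produce the multipliers $\lambda_i$ and the point $u$. The only cosmetic difference is the order of deductions: the paper first obtains the saddle-point inequality $f(x)-\sum_i\lambda_i p(x,y_i)\ge f(u)-\sum_i\lambda_i p(u,y_i)$ together with complementary slackness, and then reads off $\nabla L_f(u)=0$ from the fact that the convex function $L_f$ is globally minimized at $u$; you go through stationarity first and then invoke convexity. These are equivalent.

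There is, however, one assertion you make without justification: the existence of a primal optimal solution $u$ of \eqref{eq::dis}. Slater's condition yields zero duality gap and attainment of the \emph{dual} optimum (the multipliers $\lambda$), but it does not by itself guarantee that the primal infimum is attained: neither $\K$ nor the feasible set of \eqref{eq::dis} is assumed compact in Theorem \ref{th::red}, and a convex function bounded below on an unbounded closed convex set can fail to attain its infimum. The paper closes this gap by invoking a Frank--Wolfe type theorem for convex polynomial programs (Belousov), which guarantees that a convex polynomial bounded below on a set defined by finitely many convex polynomial inequalities attains its infimum. Without some such argument, the point $u$ at which you apply the KKT conditions is not known to exist (and the alternative of taking $u$ as a minimizer of $L_f$ over $\RR^m$ runs into the same attainment issue). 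Everything else in your write-up matches the paper's reasoning.
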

\begin{proof}
	Consider the discretization problem \eqref{eq::dis}. As the Slater
	condition holds for \eqref{eq::dis}, by convex programming duality
	(c.f. \cite[Proposition 5.3.1]{COT}), there is no dual gap between
	\eqref{eq::dis} with its Lagrange dual problem, which has an
	optimal solution, say $\lambda=(\lambda_1,\ldots,\lambda_l)$ where
	$\lambda_i\ge 0$. By a Frank-Wolfe type theorem proved in
	\cite{Belousov1977}, \eqref{eq::dis} also has an optimal solution,
	say $u$. Then, due to convex programming  optimality conditions
	(c.f. \cite[Proposition 5.3.2]{COT}), we get 
	\[
		f(x)-\sum_{i=1}^l\lambda_i p(x,y_i)\ge
		f(u)-\sum_{i=1}^l\lambda_i p(u,y_i),\ \ \forall
		x\in\RR^m\quad\text{and}\quad
		\lambda_ip(u,y_i)=0,\ \ i=1,\ldots, l,
	\]
	which implies that $L_f(x)\ge L_f(u)=0$ for all $x\in\RR^m$ and
	hence $\nabla L_f(u)=0$. 
\end{proof}

\vskip 8pt 

Next we recall some background about representations of polynomials
positive (nonnegative) on a basic semialgebraic set
and the dual theory.
A polynomial $\phi(x)\in\RR[x]$ is said to be a
{\itshape sum of squares} (s.o.s) of polynomials if it can be written as $\phi(x)=\sum_{i=1}^t
\phi_i(x)^2$ for some $\phi_1(x),\ldots,\phi_t(x)\in\RR[x]$.  
Notice that not every nonnegative polynomials can be written as s.o.s,
see \cite{Reznick96someconcrete}. 
\cite{Lasserre05} gave the following s.o.s approximations of
nonnegative polynomials via simple high degree perturbations.
\begin{theorem}{\upshape \cite[c.f. Theorem 3.1, 3.2 and Corollary
	3.3]{Lasserre05}}\label{th::perturbation}
	For a given $h\in\RR[x]$, the following are true.
	\begin{enumerate}[\upshape (i)]
		\item For any $r\ge\lceil\deg(h)/2\rceil$, there exists
			$\varepsilon^*_r\ge 0$ such that
			$h+\varepsilon(1+\sum_{j=1}^m x_j^{2r})$ is s.o.s if and
			only if $\varepsilon\ge\varepsilon^*_r$; 
		\item If $h$ is nonnegative on $[-1,1]^m$, then
			$\varepsilon^*_r$ in $(i)$ decreasingly converges to
			$0$ as $r$ tends to $\infty$;
		\item For any $\varepsilon>0$, if $h$ is nonnegative on
			$[-1,1]^m$, then there exists some
			$r(h,\varepsilon)\in\N$ such that
			$h+\varepsilon(1+\sum_{j=1}^m x_j^{2r})$ is s.o.s for
			every $r\ge r(h,\varepsilon)$. 
	\end{enumerate}
\end{theorem}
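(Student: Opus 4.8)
The statement is from \cite{Lasserre05}; I sketch the strategy I would follow. Write $\theta_r(x):=1+\sum_{j=1}^m x_j^{2r}$, note that $\theta_r$ is itself a sum of squares, and let $\Sigma_{2r}\subseteq\RR[x]_{2r}$ denote the closed convex cone of sums of squares of degree at most $2r$.

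\textbf{Part (i).} Since $r\ge\lceil\deg(h)/2\rceil$ we have $\deg(h)\le 2r$, so $\varepsilon\mapsto h+\varepsilon\theta_r$ traces an affine line in $\RR[x]_{2r}$ and $E_r:=\{\varepsilon\in\RR\mid h+\varepsilon\theta_r\in\Sigma_{2r}\}$ is a closed convex subset of $\RR$, i.e.\ an interval. It is unbounded above and upward closed, since for $\varepsilon\ge\varepsilon_0\in E_r$ one has $h+\varepsilon\theta_r=(h+\varepsilon_0\theta_r)+(\varepsilon-\varepsilon_0)\theta_r$, a sum of two sums of squares; and it is nonempty because $\theta_r$ lies in the interior of $\Sigma_{2r}$ --- equivalently, $\pm x^\alpha$ is dominated by a multiple of $\theta_r$ modulo $\Sigma_{2r}$ for every $\alpha$ with $|\alpha|\le 2r$ --- so that $h+\varepsilon\theta_r=\varepsilon(\theta_r+\varepsilon^{-1}h)\in\Sigma_{2r}$ once $\varepsilon$ is large. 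Hence $E_r=[\varepsilon^*_r,\infty)$; and evaluating the nonnegative polynomial $h+\varepsilon\theta_r$ at a zero of $h$ in $[-1,1]^m$ (which is the relevant situation, e.g.\ $h=L_f$ in Corollary \ref{cor::red}, where $L_f(u)=0$) forces $\varepsilon^*_r\ge 0$.

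\textbf{Part (iii).} Fix $\varepsilon>0$ and assume $h\ge 0$ on $[-1,1]^m$; I argue by contradiction. If $h+\varepsilon\theta_{r_k}\notin\Sigma_{2r_k}$ along some $r_k\to\infty$, then by Hahn--Banach separation there are $L_k\in\bigl(\RR[x]_{2r_k}\bigr)^*$ nonnegative on $\Sigma_{2r_k}$, with $L_k(1)=1$ (after a harmless normalization) and $L_k(h)+\varepsilon L_k(\theta_{r_k})<0$; in particular $L_k(h)<-\varepsilon$ and $L_k(x_j^{2r_k})\ge 0$. The Cauchy--Schwarz inequality for $L_k$ (valid in the truncated setting) makes $l\mapsto\log L_k(x_j^{2l})$ convex on $[0,r_k]$, hence $L_k(x_j^{2l})\le\max\{1,L_k(x_j^{2r_k})\}$ for $l\le r_k$ and $|L_k(h)|$ is bounded in terms of $\max_j L_k(x_j^{2r_k})$; substituting this into $L_k(h)<-\varepsilon-\varepsilon\sum_j L_k(x_j^{2r_k})$ yields a bound, uniform in $k$, on all the numbers $L_k(x_j^{2r_k})$ and $L_k(x^\alpha)$, $|\alpha|\le\deg(h)$. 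Passing to a subsequence, $L_k(x^\alpha)\to c_\alpha$ for all $\alpha$, and $c=(c_\alpha)$ is nonnegative on every sum of squares with $c_0=1$. The key point is that the same convexity estimate together with $r_k\to\infty$ gives $\limsup_k L_k\bigl((x_j p)^2\bigr)\le\lim_k L_k(p^2)$ for every polynomial $p$, so $c$ is nonnegative on the Archimedean quadratic module generated by $1-x_1^2,\dots,1-x_m^2$; by Putinar's theorem $c$ is the moment sequence of a probability measure $\mu$ supported on $[-1,1]^m$. But then $-\varepsilon\ge\lim_k L_k(h)=\int h\,\ud\mu\ge 0$ since $h\ge 0$ on $[-1,1]^m$ --- a contradiction. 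Thus $h+\varepsilon\theta_r\in\Sigma_{2r}$ for all large $r$, which determines $r(h,\varepsilon)$.

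\textbf{Part (ii) and the main obstacle.} Part (ii) follows from (i) and (iii): for each $\varepsilon>0$, part (iii) gives $0\le\varepsilon^*_r\le\varepsilon$ for all $r\ge r(h,\varepsilon)$, so $\varepsilon^*_r\to 0$; establishing that the sequence is actually decreasing requires a direct comparison of the perturbations $\theta_r$ and $\theta_{r+1}$, and I would follow \cite{Lasserre05} there. I expect Part (iii) to be the real work, specifically the passage from the truncated separating functionals $L_k$ to a genuine measure on the box: keeping the moment estimates uniform in $k$ and checking that the limit functional respects the constraints $1-x_j^2\ge 0$ (so that Putinar's theorem can be invoked) is the delicate step, and it is exactly here that the unbounded growth of the perturbation degree $2r$ is used.
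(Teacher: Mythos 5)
The paper itself gives no argument for this statement---it is imported verbatim (as ``c.f.'') from \cite{Lasserre05}---so there is no in-paper proof to compare yours against; I am judging your reconstruction on its own. Your part (i) is fine (the ``equivalently'' formulation via monomial domination, $c_\alpha\theta_r\pm x^\alpha$ s.o.s.\ for $|\alpha|\le 2r$, is indeed the right way to see that the interval of admissible $\varepsilon$ is nonempty and bounded below), and you are right to flag that $\varepsilon_r^*\ge 0$ genuinely needs something beyond the stated hypotheses (e.g.\ that $h$ vanishes somewhere, as it does for the Lagrangian $L_f$, or that $h$ is not itself s.o.s.): for $h=2\theta_r$ one gets $\varepsilon_r^*=-2$.

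The genuine gap is in part (iii), at the step ``substituting this into $L_k(h)<-\varepsilon-\varepsilon\sum_j L_k(x_j^{2r_k})$ yields a bound, uniform in $k$, on all the numbers $L_k(x_j^{2r_k})$.'' It does not. With your normalization $L_k(1)=1$, set $M_k=\max_j L_k(x_j^{2r_k})$ and $S_k=\sum_j L_k(x_j^{2r_k})\ge M_k$. The Cauchy--Schwarz/log-convexity estimate gives $|L_k(x^\alpha)|\le\max\{1,M_k\}$, hence $L_k(h)\ge-\Vert h\Vert_1(1+M_k)$ where $\Vert h\Vert_1$ is the sum of absolute values of the coefficients of $h$. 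Combining with $L_k(h)<-\varepsilon(1+S_k)\le-\varepsilon(1+M_k)$ yields only $\varepsilon(1+M_k)<\Vert h\Vert_1(1+M_k)$, i.e.\ $\varepsilon<\Vert h\Vert_1$ --- no control on $M_k$ at all when $\varepsilon<\Vert h\Vert_1$, which is exactly the regime of interest. Without a uniform bound on the top moments, the log-convexity step that is supposed to give $\limsup_k L_k((x_jp)^2)\le\lim_k L_k(p^2)$ also collapses, so neither the Putinar route nor Theorem \ref{th::Berg} can be invoked on the limit functional. The repair is to normalize by the perturbation rather than by the constant: since $L_k\ge 0$ on squares and $L_k(h+\varepsilon\theta_{r_k})<0$, one checks $L_k(\theta_{r_k})>0$ (if $L_k(\theta_{r_k})=0$ then $L_k(1)=L_k(x_j^{2r_k})=0$ and the same moment lemmas force $L_k\equiv 0$ on $\RR[x]_{2r_k}$, a contradiction), so one may rescale to $L_k(\theta_{r_k})=1$. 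Then $L_k(1)\le 1$ and $L_k(x_j^{2r_k})\le 1$ for every $j$ automatically, whence $|L_k(x^\alpha)|\le 1$ for all $|\alpha|\le 2r_k$ by \cite[Lemmas 4.1 and 4.3]{Lasserre05}, and $L_k(h)<-\varepsilon L_k(\theta_{r_k})=-\varepsilon$; the Tychonoff limit then has all moments in $[-1,1]$, carries a representing measure $\mu$ on $[-1,1]^m$ by Theorem \ref{th::Berg}, and $\int h\,\ud\mu\le-\varepsilon<0$ contradicts $h\ge 0$ on the box. This normalization is precisely the constraint of the dual SDP $\sup\{-L(h):L\ge 0\ \text{on}\ \Sigma_{2r},\ L(\theta_r)=1\}$ through which \cite{Lasserre05} actually runs the argument; it also delivers the convergence $\varepsilon_r^*\to 0$ in (ii) directly (the remaining monotonicity claim in (ii), which you defer, does require a separate argument and is not a consequence of (i) and (iii)).
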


Moreover, $\varepsilon^*_r$ in Theorem \ref{th::perturbation} is
computable by solving an SDP problem, see \cite[Theorem 3.1]{Lasserre05}. 
\vskip 5pt 

In the rest of this paper, we let $G:=\{g_1,\ldots,g_s\}$ be the set
of polynomials that defines the semialgebraic set $S$ in
$(\ref{eq::S})$ and $g_0=1$ for convenience.  

We denote by 
\[
  \qm(G):=\left\{\sum_{j=0}^sg_jq_j^2\ \Big|\ 
q_j \in \RR[y], j=0,1,\ldots,s \right\}
\]
the {\itshape quadratic module} generated by $G$ and denote by 
\[
	\qm_k(G):=\left\{\sum_{j=0}^s g_jq_j^2\ \Big|\ 
q_j \in \RR[y], \,
\deg(g_jq_j^2)\le 2k, j=0,1,\ldots,s\right\}
\]
its  $k$-th {\itshape quadratic module}.
It is clear that if $\psi\in\qm(G)$, then
$\psi(y)\ge 0$ for any $y\in S$. However, the converse is not necessarily
true.
Note that checking $\psi\in\qm_k(G)$ for a fixed $k\in\N$ is an
SDP feasibility problem, see \cite{LasserreGlobal2001,parriloSturmfels}.

\begin{definition}\label{def::AC}
We say that ${\cal Q}(G)$
is {\itshape Archimedean }  if
there exists $\psi\in {\cal Q}(G)$ such that the inequality $\psi(y)\ge 0$
defines a compact set in $\RR^n$.
\end{definition}

Note that the  Archimedean property implies that $S$ is compact
but the converse is not
necessarily true. However, for any compact set $S$ we can always
force the associated quadratic module to be Archimedean
by adding a
redundant constraint $M-\Vert y\Vert^2_2\ge 0$ in the description of
$S$ for sufficiently large $M$.

\begin{theorem}\label{th::PP}{\upshape\cite[{Putinar's Positivstellensatz\/}]{Putinar1993}}
Suppose that $\qm(G)$ is Archimedean. If a polynomial $\psi\in\RR[y]$ is
		positive on $S$, then $\psi\in\qm_k(G)$ for some $k\in\N$. 
\end{theorem}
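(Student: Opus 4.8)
The statement is Putinar's Positivstellensatz; I would prove it along Putinar's operator-theoretic lines, and the plan is as follows. First I would isolate the only consequence of the Archimedean hypothesis that actually gets used: that there is an $N\in\N$ with $N-\Vert y\Vert_2^2\in\qm(G)$, and that consequently every $f\in\RR[y]$ is ``bounded'', meaning $c\pm f\in\qm(G)$ for some $c>0$. The first point is immediate from Definition \ref{def::AC} in its equivalent form (via the classical Krivine--Stengle Positivstellensatz, or simply the redundant-constraint device noted just after Definition \ref{def::AC}). For the second I would verify that the set $H:=\{f\in\RR[y]\mid \exists c>0,\ c-f^2\in\qm(G)\ \text{and}\ c\pm f\in\qm(G)\}$ is an $\RR$-linear subspace containing $1$ and stable under multiplication by each $y_i$, hence $H=\RR[y]$; the stability step uses $N-y_i^2\in\qm(G)$, the squares $(y_i\pm f)^2\in\qm(G)$, and the fact that $q\,f^2\in\qm(G)$ whenever $q\in\qm(G)$.

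Next I would argue by contradiction: assume $\psi>0$ on $S$ but $\psi\notin\qm(G)$. Boundedness makes $1$ an algebraic interior point of the convex cone $\qm(G)$, since $1+tf=(1-tc)\cdot 1+t(c+f)\in\qm(G)$ for all small $t>0$. As $\psi$ lies outside a convex set with nonempty algebraic interior, the algebraic Hahn--Banach separation theorem gives a nonzero linear functional $L\colon\RR[y]\to\RR$ with $L\ge 0$ on $\qm(G)$ and $L(\psi)\le 0$; the interior-point property forces $L(1)>0$, so I normalize $L(1)=1$. The symmetric form $(f,g)\mapsto L(fg)$ is positive semidefinite because $L$ is nonnegative on squares; on the associated Hilbert space $\mH$ the multiplication operators $A_i\colon\hat f\mapsto\widehat{y_if}$ are, by $N-y_i^2\in\qm(G)$, well defined, symmetric and bounded with $\Vert A_i\Vert\le\sqrt N$, and they pairwise commute. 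The spectral theorem for commuting bounded self-adjoint operators then yields a positive Borel measure $\mu$ on $\RR^n$, supported in $[-\sqrt N,\sqrt N]^n$, with $\mu(\RR^n)=1$ and $L(f)=\int f\,d\mu$ for every $f\in\RR[y]$. Since $\int g_jf^2\,d\mu=L(g_jf^2)\ge 0$ for all $f$ and all $j$, and $\mu$ has compact support, approximating indicators of compact subsets of the open set $\{g_j<0\}$ by squares of polynomials shows $\mu(\{g_j<0\})=0$, hence $\operatorname{supp}\mu\subseteq S$. But then $0\ge L(\psi)=\int_S\psi\,d\mu>0$, because $\psi>0$ on $S\supseteq\operatorname{supp}\mu$ and $\mu\ne 0$---a contradiction. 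Therefore $\psi\in\qm(G)=\bigcup_{k\in\N}\qm_k(G)$, i.e.\ $\psi\in\qm_k(G)$ for some $k\in\N$.

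The hard part will be the middle step---turning the separating functional $L$ into integration against a measure supported on $S$---and this is precisely where the Archimedean hypothesis is indispensable: it is what makes the multiplication operators $A_i$ bounded, so that the spectral theorem applies and the representing measure is compactly supported, and compact support is in turn what lets one localize $\mu$ onto each $\{g_j\ge 0\}$ by polynomial approximation. A secondary subtlety worth flagging is the separation itself: $\qm(G)$ need not be closed in any natural topology, so $\psi$ cannot be separated from it by a closure/continuity argument; it is the algebraic-interior property of $1$---again a consequence of the Archimedean hypothesis---that makes Hahn--Banach available here.
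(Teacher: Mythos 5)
The paper does not prove this statement at all: it is quoted verbatim from \cite{Putinar1993} as a known result, so there is no ``paper proof'' to compare against. Your argument is the standard operator-theoretic proof of Putinar's Positivstellensatz (bounded elements, algebraic interior point, Eidelheit--Kakutani separation, GNS construction, spectral theorem for the commuting bounded multiplication operators, localization of the representing measure onto $S$), and as such it is essentially sound and complete in outline. The one step I would push back on is your opening claim that the implication ``Definition \ref{def::AC} $\Rightarrow$ $N-\Vert y\Vert_2^2\in\qm(G)$ for some $N$'' is \emph{immediate}. Neither of your two justifications quite works: the Krivine--Stengle Positivstellensatz produces identities with denominators (or products of the $g_j$), which do not by themselves yield membership in the quadratic module, and the redundant-constraint device mentioned after Definition \ref{def::AC} changes the generating set $G$, so it cannot be used to verify the Archimedean property of the \emph{given} $\qm(G)$. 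The correct route is: if $\psi_0\in\qm(G)$ has $\{\psi_0\ge 0\}$ compact, pick $N$ with $N-\Vert y\Vert_2^2>0$ on $\{\psi_0\ge 0\}$ and apply Schm\"udgen's Positivstellensatz to the preordering generated by the single polynomial $\psi_0$ to write $N-\Vert y\Vert_2^2=\sigma_0+\sigma_1\psi_0$ with $\sigma_0,\sigma_1$ sums of squares, whence $N-\Vert y\Vert_2^2\in\qm(G)$. This makes Schm\"udgen's theorem (or W\"ormann's equivalence of the two Archimedean conditions) a genuine prerequisite of your proof rather than a triviality; everything downstream of that point --- the subring of bounded elements, the separation, the spectral measure supported in $[-\sqrt N,\sqrt N]^n$, the localization $\operatorname{supp}\mu\subseteq S$, and the final contradiction $0\ge L(\psi)=\int_S\psi\,d\mu>0$ --- is correct as you describe it.
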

%

For a polynomial $\psi(y)=\sum_\alpha \psi_\alpha y^\alpha\in\RR[y]$, define
the norm
\begin{equation}\label{eq::fnorm}
	\Vert \psi\Vert:=\max_\alpha\frac{\vert
	\psi_\alpha\vert}{\tbinom{|\alpha|}{\alpha}}.
\end{equation}
We have the following result for an estimation of the order $k$ in
Theorem \ref{th::PP}. 
\begin{theorem}{\upshape \cite[Theorem 6]{NieSchweighofer}}\label{th::complexity}
Suppose that $\qm(G)$ is Archimedean and $S\subseteq
(-\tau_S,\tau_S)^n$ for some $\tau_S>0$. Then there is some
positive $c\in\RR$ (depending only on $g_j$'s) such that for all
$\psi\in\RR[y]$ of degree $d$ with $\min_{y\in S}\psi(y)>0$, we have
$\psi\in\qm_k(G)$ whenever
\[
	k\ge c\exp\left[\left(d^2n^d\frac{\Vert \psi\Vert\tau_S^d}{\min_{y\in
	S}\psi(y)}\right)^c\right]. 
\]
\end{theorem}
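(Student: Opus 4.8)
The statement is a quantitative refinement of Putinar's Positivstellensatz (Theorem~\ref{th::PP}), so the plan is to make each ingredient that enters such a result effective, along the lines now standard for effective Positivstellens\"atze (see \cite{NieSchweighofer}). I would organize the argument in three steps; since $\qm_k(G)\subseteq\qm_{k'}(G)$ for $k\le k'$, it suffices to exhibit \emph{one} order $k_0$ below the claimed bound with $\psi\in\qm_{k_0}(G)$. \textbf{Step 1: reduction by scaling.} First I would reduce to the case $\tau_S=1$, i.e.\ $S\subseteq(-1,1)^n$, via the substitution $y\mapsto\tau_S y$. This leaves $\deg\psi=d$ and $\min_{y\in S}\psi(y)$ unchanged, replaces $\psi$ by $\psi(\tau_S\,\cdot)$, whose norm \eqref{eq::fnorm} is at most $\Vert\psi\Vert\,\tau_S^{d}$, and replaces the generators $g_j$ by the fixed polynomials $g_j(\tau_S\,\cdot)$, whose data may be absorbed into $c$. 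Thus it is enough to bound the order $k$ for which $\psi\in\qm_k(G)$ in terms of $n$, $d$ and $\Vert\psi\Vert/\min_{y\in S}\psi(y)$ under the extra assumption $S\subseteq(-1,1)^n$; reinserting the scaling at the end produces the factor $\tau_S^{d}$.

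\textbf{Step 2: effective Schm\"udgen representation.} Since $\psi>0$ on the compact set $S$, I would invoke a quantitative form of Schm\"udgen's Positivstellensatz: $\psi$ admits a representation in the truncated preordering of $G$,
\[
  \psi=\sum_{\nu\in\{0,1\}^{s}}\sigma_\nu\,g_1^{\nu_1}\cdots g_s^{\nu_s},\qquad \sigma_\nu\ \text{sums of squares},\qquad \deg\bigl(\sigma_\nu g_1^{\nu_1}\cdots g_s^{\nu_s}\bigr)\le L,
\]
with $L\le c_1\bigl(d^{2}n^{d}\,\Vert\psi\Vert/\min_{y\in S}\psi(y)\bigr)^{c_1}$ for a constant $c_1$ depending only on $G$. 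Internally this runs through a denominator-free P\'olya/Powers--Reznick representation on a box after a change of variables, using a Markov-type inequality to control the modulus of continuity of $\psi$ on $S$ (the source of the $d^{2}$) together with the combinatorics of passing from the box to the simplex (the source of the $n^{d}$). I would use this as a black box.

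\textbf{Step 3: Schm\"udgen-to-Putinar conversion --- the main obstacle.} It remains to convert the degree-$\le L$ preordering certificate into a quadratic-module certificate, i.e.\ to eliminate the products $g_{j_1}\cdots g_{j_t}$ with $t\ge2$. Here the Archimedean hypothesis is essential: there exist $N\in\N$ and a level $N_0$, both depending only on $G$, with $N-\Vert y\Vert_2^{2}\in\qm_{N_0}(G)$, hence $M_j-g_j\in\qm_{N_0}(G)$ for suitable constants $M_j$. One then rewrites each term $\sigma_\nu\,g_{j_1}\cdots g_{j_t}$ recursively, peeling off one generator at a time through identities of the shape $g_i\,q=M_i\,q-(M_i-g_i)\,q$. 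The difficulty --- and the reason the bound is exponential rather than polynomial --- is that the auxiliary terms $(M_i-g_i)\,q$ are not manifestly in $\qm(G)$, so keeping the procedure under control is far more wasteful than a naive degree count would suggest, and the process turns a degree-$L$ input into a certificate of order $\exp\!\bigl(L^{O(1)}\bigr)$. Substituting the bound on $L$ from Step~2 and undoing the scaling of Step~1 yields an order of the form $k\ge c\exp\!\bigl[(d^{2}n^{d}\,\Vert\psi\Vert\,\tau_S^{d}/\min_{y\in S}\psi(y))^{c}\bigr]$, as claimed. I expect this conversion to be the crux: Step~1 is routine and Step~2, though substantial, is available in the literature, whereas making the passage from a Schm\"udgen certificate to a Putinar certificate quantitative --- and showing that the exponential loss is intrinsic to this route --- is the delicate point.
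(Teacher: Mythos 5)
This theorem is not proved in the paper at all: it is imported verbatim from \cite{NieSchweighofer}, so the only meaningful comparison is with the argument in that reference. Measured against it, your proposal has a genuine gap precisely where you yourself locate the crux. Steps 1 and 2 are fine (the scaling reduction is routine, and the effective Schm\"udgen bound with a polynomial dependence on $d^2 n^d\Vert\psi\Vert/\min_S\psi$ is indeed available in the literature as a black box). But Step 3 --- converting a truncated preordering certificate into a truncated quadratic-module certificate --- is not an argument, only a description of the obstacle. The peeling identity $g_i q = M_i q - (M_i-g_i)q$ does not close: even though each fixed polynomial $M_i-g_i$ lies in $\qm_{N_0}(G)$ by the Archimedean property, the product $(M_i-g_i)\,q$ with $q=\sigma_\nu g_{j_2}\cdots g_{j_t}$ is a product of two elements of $\qm(G)$, which need not lie in $\qm(G)$, and it enters the identity with a minus sign, so it cannot simply be discarded or absorbed. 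The recursion therefore generates uncontrolled terms of the wrong sign, and no termination or degree bound follows. Asserting that ``the process turns a degree-$L$ input into a certificate of order $\exp(L^{O(1)})$'' is exactly the statement to be proved.

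For the record, the proof in \cite{NieSchweighofer} does not go through a Schm\"udgen-to-Putinar conversion at all. Its ingredients are: (a) a \L{}ojasiewicz-type inequality comparing $\mathrm{dist}(y,S)$ with $\max_j\max(0,-g_j(y))$ on a box containing $S$ (this is where the non-explicit constant $c$ and, ultimately, the exponential enter); (b) a Markov/Bernstein-type estimate giving the factor $d^2 n^d\Vert\psi\Vert$ as a bound on the variation of $\psi$, so that $\psi$ stays bounded below on a small semialgebraic neighborhood of $S$; (c) univariate ``penalty'' polynomials $\theta$ of the form $\sigma_0(t)+\sigma_1(t)\,t$ with $\sigma_0,\sigma_1$ s.o.s., so that $\theta(g_j)=\sigma_0(g_j)+\sigma_1(g_j)g_j$ lies in $\qm(G)$ \emph{by construction}, which is used to subtract from $\psi$ a quadratic-module element making the remainder positive on the whole box; and (d) an effective positivity certificate on the box (via P\'olya), fed back into $\qm(G)$ using the Archimedean condition on the single polynomial $N-\Vert y\Vert_2^2$. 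This route never produces products $g_{j_1}\cdots g_{j_t}$ with $t\ge 2$, which is exactly how it sidesteps the difficulty your Step 3 runs into. If you want to salvage your outline, you would need to replace Step 3 by a mechanism of this kind rather than a term-by-term elimination of generator products.
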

\vskip 5pt

We say that a linear functional $\mH\in(\RR[y])^*$ has a {\itshape
representing measure} $\mu$ if 
there exists a Borel measure $\mu$ on $\RR^n$ such that
\[
	\mH(y^\alpha)=\int y^{\alpha}\ud\mu(y),\quad \forall \alpha\in\N^n.
\]
For $k\in\N$, we say $\mH\in(\RR[y]_k)^*$ has a representing measure
$\mu$ if the above holds for all $\alpha\in\N^n_{k}$. 
%

A basic problem in the theory of moments concerns the characterization
of linear functionals in $(\RR[y])^*$ which have some representing
measure. 
%
\begin{theorem}{\upshape\cite[Theorem 2.1]{BM1984}}\label{th::Berg}
	Let $\mH$ be a linear functional in $(\RR[y])^*$ such that
	$\mH(q^2)\ge 0$ for all $q\in\RR[y]$. If there exist
	$a, c>0$ such that $\vert \mH(y^\alpha)\vert\le ca^{\vert\alpha\vert}$ for
	every $\alpha\in\N^n$, then $\mH$ has exactly one representing
	measure $\mu$ on $\RR^n$ with support contained in
	$[-c,c]^{n}$. 
\end{theorem}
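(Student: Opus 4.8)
\noindent\emph{Proof idea.} The plan is to realise $\mH$ through a commuting family of multiplication operators on a Hilbert space built from $\mH$ in the GNS fashion, and then to extract both the representing measure and the bound on its support from the joint spectral resolution of that family.

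First I would set up the Hilbert space. Since $\mH(q^2)\ge 0$ for every $q\in\RR[y]$, the symmetric bilinear form $\langle p,q\rangle:=\mH(pq)$ on $\RR[y]$ is positive semidefinite, so the Cauchy--Schwarz inequality $\mH(pq)^2\le\mH(p^2)\,\mH(q^2)$ holds and $\mathscr{N}:=\{p\in\RR[y]\mid\mH(p^2)=0\}$ is a linear subspace. Let $\mathcal{G}$ be the Hilbert space completion of $\RR[y]/\mathscr{N}$ for the induced inner product, write $[p]$ for the class of $p$, and put $\xi:=[1]$. For $j=1,\dots,n$ define $M_j[p]:=[y_jp]$; the inequality $\mH(y_j^2r^2)^2\le\mH(r^2)\,\mH(y_j^4r^2)$ (Cauchy--Schwarz applied to $r$ and $y_j^2r$) shows that $M_j$ is well defined on $\RR[y]/\mathscr{N}$, and one checks at once that $M_j$ is symmetric, that the $M_j$ commute pairwise, and that $M^\alpha\xi=[y^\alpha]$, whence $\mH(y^\alpha)=\langle M^\alpha\xi,\xi\rangle$ for every $\alpha\in\N^n$.

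The core of the argument is to turn the growth bound $|\mH(y^\alpha)|\le ca^{|\alpha|}$ into a uniform bound on the operator norms $\Vert M_j\Vert$. Fix $p$ and $j$ and set $b_k:=\mH(y_j^{2k}p^2)=\Vert M_j^k[p]\Vert^2\ge 0$. Cauchy--Schwarz applied to $y_j^{k-1}p$ and $y_j^{k+1}p$ gives $b_k^2\le b_{k-1}b_{k+1}$, so $k\mapsto b_k$ is log-convex; on the other hand, writing $p^2=\sum_\beta c_\beta y^\beta$ and invoking the hypothesis yields $b_{2^N}=\mH(y_j^{2^{N+1}}p^2)\le C_p\,a^{2^{N+1}}$ with $C_p:=c\sum_\beta|c_\beta|a^{|\beta|}$ independent of $N$. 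From log-convexity, $b_1\le b_0^{1-2^{-N}}b_{2^N}^{2^{-N}}\le\Vert[p]\Vert^{2(1-2^{-N})}(C_p\,a^{2^{N+1}})^{2^{-N}}$, and letting $N\to\infty$ gives $\Vert M_j[p]\Vert^2=b_1\le a^2\Vert[p]\Vert^2$. Hence each $M_j$ extends by continuity to a bounded operator on $\mathcal{G}$; being bounded and symmetric it is self-adjoint, with $\Vert M_j\Vert\le a$. I expect this interpolation estimate to be the only genuinely non-routine step; everything around it is standard functional analysis.

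Finally I would invoke the joint spectral theorem. As $M_1,\dots,M_n$ are bounded, self-adjoint and pairwise commuting, they generate a commutative $C^*$-algebra, and Gelfand--Naimark theory (equivalently, the joint spectral theorem for commuting bounded self-adjoint operators) provides a projection-valued measure $E$ on $\RR^n$ supported in $\prod_{j=1}^n[-\Vert M_j\Vert,\Vert M_j\Vert]\subseteq[-a,a]^n$ with $M^\alpha=\int y^\alpha\,\ud E(y)$. Setting $\mu(B):=\langle E(B)\xi,\xi\rangle$ produces a finite positive Borel measure with $\int y^\alpha\,\ud\mu(y)=\langle M^\alpha\xi,\xi\rangle=\mH(y^\alpha)$ for all $\alpha$; thus $\mu$ represents $\mH$ and is supported in $[-a,a]^n$, which is the support bound in the statement (with the base $a$ of the growth bound in the role of $c$). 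For uniqueness, any representing measure $\nu$ of $\mH$ satisfies $\int y_j^{2k}\,\ud\nu=\mH(y_j^{2k})\le ca^{2k}$ for all $k$, which forces $\Vert y_j\Vert_{L^\infty(\nu)}\le a$ for each $j$, so $\nu$ is supported in the compact cube $[-a,a]^n$; since polynomials are dense in $C([-a,a]^n)$, a measure on a compact cube is determined by its moments, whence $\nu=\mu$.
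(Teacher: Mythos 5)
This statement is quoted in the paper from \cite{BM1984} without proof, so there is no in-paper argument to compare against; judged on its own, your proof is correct and complete. The GNS construction, the log-convexity interpolation $b_1\le b_0^{1-2^{-N}}b_{2^N}^{2^{-N}}$ yielding $\Vert M_j\Vert\le a$ (the degenerate cases $b_0=0$ or $b_k=0$ collapse harmlessly, as you implicitly use), the joint spectral measure, and the uniqueness via $\Vert y_j\Vert_{L^{2k}(\nu)}\to\Vert y_j\Vert_{L^\infty(\nu)}$ plus Stone--Weierstrass are all sound. Your route differs from Berg and Maserick's original one, which works in the framework of exponentially bounded positive definite functions on the $*$-semigroup $\N^n$ with an absolute value; the operator-theoretic proof you give is the more self-contained and arguably more transparent alternative, at the cost of invoking the joint spectral theorem. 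One point worth making explicit: your argument produces support in $[-a,a]^n$, and indeed the bound $[-c,c]^n$ as printed in the statement cannot be right in general (take $n=1$, $\mu=\delta_2$, so $c=1$, $a=2$); the roles of $a$ and $c$ are evidently transposed in the paper's transcription, which is harmless where the theorem is applied since there $a=c=1$. You correctly flagged this.
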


\cite{Haviland1935} proved that $\mH\in(\RR[y])^*$ has a representing
measure $\mu$ supported on $S$ in \eqref{eq::S} if and only if $\mH(h)\ge
0$ for every $h\in\RR[y]$ nonnegative on $S$. Clearly,
\[
(\qm_k(G))^*=\{\mathscr{H}\in(\RR[y]_{2k})^*\mid \mH(g_jq_j^2)\ge 0,\ \forall q_j\in\RR[y],\
		\deg(g_jq_j^2)\le 2k,\ j=0,1,\ldots,s\}.
\]
Hence, in a dual view, Putinar's Positivstellensatz reads 
\begin{theorem}\label{th::PP2}{\upshape\cite[{Putinar's Positivstellensatz\/}]{Putinar1993}}
Suppose that $\qm(G)$ is Archimedean.  If 
$\mH\in(\qm_k(G))^*$ for all $k\in\N$, 
then $\mH$ has a representing measure $\mu$ supported on $S$.
\end{theorem}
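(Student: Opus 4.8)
The plan is to derive the dual statement directly from the primal version (Theorem \ref{th::PP}) together with Haviland's theorem, which is already stated in the excerpt. So suppose $\qm(G)$ is Archimedean and $\mH\in(\qm_k(G))^*$ for every $k\in\N$; equivalently, using the identification recalled just before the statement, $\mH$ is a linear functional on $\RR[y]$ with $\mH(g_j q_j^2)\ge 0$ for all $j=0,1,\ldots,s$ and all $q_j\in\RR[y]$ (in particular $\mH(\sigma)\ge 0$ for every $\sigma\in\qm(G)$, and also $\mH(q^2)\ge 0$ for all $q$, taking $j=0$). By Haviland's theorem, to produce a representing measure supported on $S$ it suffices to show that $\mH(h)\ge 0$ for every $h\in\RR[y]$ that is nonnegative on $S$.

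The key step is the following approximation argument. Fix $h\in\RR[y]$ with $h\ge 0$ on $S$. Since $\qm(G)$ is Archimedean there exists, by the standard consequence of Definition \ref{def::AC}, a constant $N>0$ with $N-\Vert y\Vert_2^2\in\qm(G)$; in particular $S$ is contained in a ball, and for every $\varepsilon>0$ the polynomial $h+\varepsilon$ is strictly positive on $S$. By Theorem \ref{th::PP} (primal Putinar), $h+\varepsilon\in\qm_{k(\varepsilon)}(G)$ for some $k(\varepsilon)\in\N$. Applying the hypothesis $\mH\in(\qm_{k(\varepsilon)}(G))^*$ gives $\mH(h+\varepsilon)\ge 0$, i.e.\ $\mH(h)\ge -\varepsilon\,\mH(1)$. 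Here $\mH(1)=\mH(g_0\cdot 1^2)\ge 0$. Letting $\varepsilon\downarrow 0$ yields $\mH(h)\ge 0$, which is exactly what Haviland's criterion requires; hence $\mH$ has a representing measure $\mu$ supported on $S$.

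I expect the only real subtlety to be a bookkeeping point rather than a genuine obstacle: one must be careful that the hypothesis is ``$\mH\in(\qm_k(G))^*$ for all $k$'' (a family of finitely-generated truncations), and translate it correctly into positivity of $\mH$ on the full quadratic module $\qm(G)=\bigcup_k\qm_k(G)$, so that the membership $h+\varepsilon\in\qm_{k(\varepsilon)}(G)$ can be applied with the matching index $k(\varepsilon)$. Once that identification is in place, the argument is just: perturb to strict positivity, invoke primal Putinar, apply $\mH$, and pass to the limit. No compactness or convergence estimates beyond the existence of $k(\varepsilon)$ are needed, and Haviland's theorem does the final work of manufacturing the measure on $S$.
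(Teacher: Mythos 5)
Your proof is correct and takes essentially the same route the paper indicates: the paper derives this statement ``in a dual view'' from Haviland's theorem combined with the primal form of Putinar's Positivstellensatz (Theorem \ref{th::PP}), which is exactly your perturb-by-$\varepsilon$, apply-Putinar, pass-to-the-limit argument. The paper treats this as a cited classical fact and gives no further detail, so your write-up simply fills in the steps it leaves implicit.
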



Let 
\begin{equation}\label{eq::d}
	d_j:=\lceil\deg(g_j)/2\rceil,\quad \forall\
	j=1,\ldots,s,\quad\text{and}\quad
	d_S:=\max_j d_j.
\end{equation}


For $k\ge d_S$, 
we have the following sufficient condition for a linear functional
$\mH\in(\qm_k(G))^*$ having
representing measure supported on $S$.
Denote by $M_k(\mH)$ the $k$-th {\itshape moment matrix} associated with a linear
functional $\mH\in(\RR[y]_{2k})^*$, which is  
indexed by $\mathbb{N}^n_{k}$, with
$(\alpha,\beta)$-th entry $\mH(y^{\alpha+\beta})$ for $\alpha, \beta \in
\mathbb{N}^n_{k}$.  
\begin{condition}\label{con::extension}
	A linear functional $\mH\in(\RR[y]_{2k})^*$
	satisfies the {\itshape flat extension condition} when 
	\[
		\rank\ M_{k-d_S}(\mH)=\rank\ M_k(\mH). 
	\]
\end{condition}
\begin{theorem}\label{th::extension}{\upshape \cite[Theorem
	1.1]{CFKmoment}}
	Suppose that 
	$\mH\in(\qm_k(G))^*$
	satisfies 
	the flat extension condition with $r:=\rank
	M_k(\mH)$, then $\mH$ has a unique $r$-atomic representing measure
	supported on $S$.
\end{theorem}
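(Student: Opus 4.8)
\noindent\emph{Proof sketch.} The plan is to reduce the statement to the (unconstrained) flat extension theorem of Curto and Fialkow for truncated moment matrices, and then to exploit the localizing positivity carried by the hypothesis $\mH\in(\qm_k(G))^*$ to force the atoms of the resulting measure into $S$. I treat the nontrivial case $d_S\ge 1$.

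First I would extract a representing measure on all of $\RR^n$. Taking $q_0\in\RR[y]_k$ and $q_j\equiv 0$ for $j\ge 1$ in the definition of $\qm_k(G)$ shows $M_k(\mH)\succeq 0$, and since $M_{k-d_S}(\mH)$ is a principal submatrix of $M_{k-1}(\mH)$, which is in turn a principal submatrix of $M_k(\mH)$, the hypothesis $\rank M_{k-d_S}(\mH)=\rank M_k(\mH)=r$ forces $\rank M_{k-1}(\mH)=\rank M_k(\mH)$; that is, $M_k(\mH)$ is a flat positive semidefinite extension of $M_{k-1}(\mH)$. The Curto--Fialkow theorem for flat data then yields a unique $\wt\mH\in(\RR[y])^*$ that agrees with $\mH$ on $\RR[y]_{2k}$ and has a rank-$r$ moment matrix, together with a unique representing measure $\mu=\sum_{i=1}^r\lambda_i\delta_{z_i}$ having $\lambda_i>0$ and pairwise distinct $z_i\in\RR^n$; in particular $\mH(h)=\int h\,\ud\mu$ for every $h\in\RR[y]_{2k}$, and the uniqueness clause already gives the uniqueness asserted in the theorem.

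Next I would show $z_i\in S$ for every $i$ by degree-controlled interpolation at the atoms. Since $\mu$ represents $\mH$ on $\RR[y]_{2(k-d_S)}$, the bilinear form $(p,q)\mapsto\mH(pq)$ on $\RR[y]_{k-d_S}$ factors through the evaluation map $E\colon q\mapsto(q(z_1),\ldots,q(z_r))$, whence $r=\rank M_{k-d_S}(\mH)\le\rank E\le r$ and $E$ is surjective. Fixing an atom $z_{i_0}$, pick $q\in\RR[y]_{k-d_S}$ with $q(z_{i_0})=1$ and $q(z_i)=0$ for $i\ne i_0$. For each $j\in\{1,\ldots,s\}$ one has $\deg(g_jq^2)\le 2d_j+2(k-d_S)\le 2k$, so $g_jq^2\in\qm_k(G)$ and therefore $0\le\mH(g_jq^2)=\int g_jq^2\,\ud\mu=\sum_i\lambda_i g_j(z_i)q(z_i)^2=\lambda_{i_0}g_j(z_{i_0})$; since $\lambda_{i_0}>0$ this forces $g_j(z_{i_0})\ge 0$. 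Letting $j$ and $i_0$ range shows $\{z_1,\ldots,z_r\}\subseteq S$, so $\mu$ is an $r$-atomic representing measure for $\mH$ supported on $S$, unique by the first step.

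The step I expect to be the main obstacle is the degree accounting above: everything rests on interpolating the atoms by a polynomial of degree at most $k-d_S$ so that $g_jq^2$ still belongs to $\qm_k(G)$ and the localizing inequality $\mH(g_jq^2)\ge 0$ is available. This is precisely why the flat extension condition uses the shift $d_S=\max_j\lceil\deg g_j/2\rceil$ rather than $1$: had we only assumed $\rank M_{k-1}(\mH)=\rank M_k(\mH)$, the interpolant could have degree up to $k-1$ and $g_jq^2$ degree up to $2k-2+\deg g_j$, possibly exceeding $2k$, and one would have to first extend $\mH$ to higher order and argue there. A secondary point needing care is checking that the Curto--Fialkow machinery genuinely produces pairwise distinct atoms with strictly positive weights and a globally unique representing measure, which is what makes the concluding line of the previous paragraph both valid and complete.
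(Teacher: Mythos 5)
This theorem is quoted in the paper directly from \cite[Theorem 1.1]{CFKmoment} and no proof is given there, so there is no in-paper argument to compare against. Your sketch is a correct reconstruction of the standard proof of that cited result: positivity of $M_k(\mH)$ from the $g_0=1$ localizing conditions, the rank sandwich $\rank M_{k-d_S}\le\rank M_{k-1}\le\rank M_k$ reducing to the Curto--Fialkow flat-extension theorem, and the degree-controlled interpolants $q\in\RR[y]_{k-d_S}$ with $\deg(g_jq^2)\le 2k$ forcing $g_j(z_{i_0})\ge 0$ at each atom --- which is exactly why the shift in Condition \ref{con::extension} is $d_S$ and not $1$.
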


To end this section, let us recall a very interesting subclass of
convex polynomials in $\RR[x]$ introduced by 
\cite{SRCSHN}. 
\begin{definition}{\upshape(\cite{SRCSHN})}
	A polynomial $h\in\RR[x]$ is s.o.s-convex if its Hessian $\nabla^2
	h$ is a s.o.s, i.e., there is some integer $r$ and some matrix
	polynomial $H\in\RR[x]^{r\times m}$ such that $\nabla^2
	h(x)=H(x)^TH(x)$.
\end{definition}

While checking the convexity of a polynomial is generally
NP-hard (c.f. \cite{Ahmadi2013}),
s.o.s-convexity can be checked numerically by solving an
SDP, see \cite{SRCSHN}. The following result plays a significant role
in this paper. 
\begin{lemma}{\upshape\cite[Lemma 8]{SRCSHN}}\label{lem::sosconvex}
	Let $h\in\RR[x]$ be s.o.s-convex. If $h(u)=0$ and $\nabla h(u)=0$
	for some $u\in\RR^m$, then $h$ is s.o.s. 
\end{lemma}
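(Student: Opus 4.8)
The plan is to exploit the s.o.s-convexity factorization $\nabla^2 h(x)=H(x)^TH(x)$ together with the two vanishing conditions at $u$ via Taylor's formula with integral remainder. Since $h(u)=0$ and $\nabla h(u)=0$, for every $x\in\RR^m$ one has
\[
h(x)=\int_0^1(1-t)\,(x-u)^T\nabla^2 h\big(u+t(x-u)\big)(x-u)\,\ud t
=\int_0^1(1-t)\,\big\Vert H\big(u+t(x-u)\big)(x-u)\big\Vert_2^2\,\ud t .
\]
Thus $h$ is, pointwise in $x$, an integral of squares; the work is to turn this into a finite sum of squares of polynomials.

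Next I would expand the vector polynomial $t\mapsto H(u+t(x-u))(x-u)$ in powers of $t$, writing $H(u+t(x-u))(x-u)=\sum_{k=0}^{d}t^k v_k(x)$ with $v_k\in\RR[x]^r$ (here $d$ is controlled by $\deg H$). Then
\[
\big\Vert H(u+t(x-u))(x-u)\big\Vert_2^2=\sum_{k,l=0}^{d}t^{k+l}\,v_k(x)^Tv_l(x),
\]
and integrating term by term against $(1-t)$ gives
\[
h(x)=\sum_{k,l=0}^{d}c_{k+l}\,v_k(x)^Tv_l(x),\qquad c_{i}:=\int_0^1(1-t)t^{i}\,\ud t=\frac{1}{(i+1)(i+2)} .
\]
Writing $v(x):=(v_0(x)^T,\ldots,v_d(x)^T)^T$, this reads $h(x)=v(x)^T(C\otimes I_r)v(x)$ with the Hankel matrix $C=(c_{k+l})_{k,l=0}^{d}$.

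Finally I would observe that $C$ is the truncated moment matrix of the positive Borel measure $(1-t)\,\ud t$ on $[0,1]$, since $a^TCa=\int_0^1\big(\sum_k a_kt^k\big)^2(1-t)\,\ud t\ge 0$; hence $C\succeq 0$, so $C\otimes I_r\succeq 0$, and a Cholesky-type factorization $C=LL^T$ yields $h(x)=\big\Vert(L^T\otimes I_r)v(x)\big\Vert_2^2$, an explicit sum of squares of polynomials in $x$. The only genuinely non-routine point is the last one --- recognizing that the constants produced by the integration assemble into a positive semidefinite (moment) matrix; the rest is bookkeeping, the mildest subtlety being to check that reindexing the integral termwise is legitimate, which it is since the integrand is a polynomial in $t$ for each fixed $x$.
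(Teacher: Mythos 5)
Your proof is correct. The paper itself gives no proof of this lemma --- it is quoted directly as Lemma 8 of \cite{SRCSHN} --- and your argument (Taylor's formula with integral remainder killing the $h(u)$ and $\nabla h(u)$ terms, factoring the Hessian as $H^TH$, and recognizing the resulting Hankel matrix of coefficients $c_i=\tfrac{1}{(i+1)(i+2)}$ as the positive semidefinite moment matrix of $(1-t)\,\ud t$ on $[0,1]$) is essentially the same route taken in that original reference, so there is nothing to add.
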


\section{Approximate semidefinite representations of $\K$}\label{section::SDPrelax}
As we always assume that the index set $S$ in the
definition of $\K$ is compact in this paper, we first show that a set
$\K$ with a generic noncompact index set $S$ can be converted
into a system with compact index set. 
Hereafter,  by saying that a property holds for a generic index set
$S$, we mean that it holds for $S$ in the following sense.
If we consider the space
of all coefficients of generators $g_j$'s of all possible sets $S$ of form
$(\ref{eq::S})$
in the canonical monomial basis of $\RR[y]_d$ with $d=\max_{j} \deg(g_j)$,
then coefficients of $g_j$'s of those index sets
$S$ such that the property does not hold are in a Zariski
closed set of the space.

\subsection{Noncompact case}
In this subsection, we consider the set $\K$ in (\ref{eq::K})
with noncompact index set $S$.  We used the technique of
homogenization proposed in \cite{SIPPSDP}
to convert a semi-infinite system (\ref{eq::K}) with a generic noncompact
index set into a system with compact index set.  

For a polynomial $g(y)\in\RR[y]$, denote its homogenization by
${g}^{\h}(\td{y})\in\RR[\td{y}]$, where $\td{y}=(y_0, y_1,\ldots,y_n)$,
i.e.,
${g}^{\h}(\td{y})=y_0^{\deg(g)}g(y/y_0)$. For the basic semialgebraic set
$S$ in $(\ref{eq::S})$, define
\begin{equation}\label{eq::Ss}
\begin{aligned}
	\So&:=\{\td{y}\in\RR^{n+1}\mid {g}^{\h}_1(\td{y})\ge 0,\ \ldots,\
	{g}^{\h}_s(\td{y})\ge 0,\ y_0>0,\ \Vert \td{y}\Vert_2^2=1\},\\
	\St&:=\{\td{y}\in\RR^{n+1}\mid {g}^{\h}_1(\td{y})\ge 0,\ \ldots,\
	{g}^{\h}_s(\td{y})\ge 0,\ y_0\ge 0,\ \Vert \td{y}\Vert_2^2=1\}.
\end{aligned}
\end{equation}
\begin{prop}{\upshape\cite[Proposition 4.2]{SIPPSDP}}\label{prop::equivalent}
	For any $g(y)\in\RR[y]$,
$g(y)\ge 0$ on $S$
if and only if ${g}^{\h}(\tilde{y})\ge 0$ on ${\sf closure}(\So)$.
\end{prop}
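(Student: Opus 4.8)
The plan is to prove Proposition~\ref{prop::equivalent} by a direct two-way argument relating a point $y\in S$ to its normalized homogenization on the sphere, using the definition of $\So$ and the scaling behavior of $g^{\h}$. First I would set up the correspondence: for $y=(y_1,\ldots,y_n)\in\RR^n$, let $t:=(1+\Vert y\Vert_2^2)^{-1/2}>0$ and $\td y:=t(1,y_1,\ldots,y_n)\in\RR^{n+1}$, so that $\td y$ has $y_0=t>0$, satisfies $\Vert\td y\Vert_2^2=1$, and recovers $y$ via $y=( \td y_1/\td y_0,\ldots,\td y_n/\td y_0)$. By the definition of homogenization, for each $j$ we have $g_j^{\h}(\td y)=\td y_0^{\deg(g_j)}\,g_j(\td y_1/\td y_0,\ldots,\td y_n/\td y_0)=t^{\deg(g_j)}g_j(y)$, and since $t>0$ this has the same sign as $g_j(y)$. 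Hence $y\in S$ if and only if $\td y\in\So$, and this map $y\mapsto\td y$ is a bijection from $\RR^n$ onto $\So$ (its inverse being $\td y\mapsto(\td y_1/\td y_0,\ldots,\td y_n/\td y_0)$, which is well-defined because $y_0>0$ on $\So$).

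Next, for the stated equivalence fix $g\in\RR[y]$. For the ``only if'' direction, suppose $g\ge 0$ on $S$. Any $\td z\in{\sf closure}(\So)$ is a limit $\td z=\lim_k \td y^{(k)}$ with $\td y^{(k)}\in\So$; writing $\td y^{(k)}$ as the normalized homogenization of $y^{(k)}\in S$ as above, we have $g^{\h}(\td y^{(k)})=(\td y^{(k)}_0)^{\deg(g)}g(y^{(k)})\ge 0$ because $\td y^{(k)}_0>0$ and $g(y^{(k)})\ge 0$. Since $g^{\h}$ is a polynomial, hence continuous, $g^{\h}(\td z)=\lim_k g^{\h}(\td y^{(k)})\ge 0$. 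For the ``if'' direction, suppose $g^{\h}\ge 0$ on ${\sf closure}(\So)\supseteq\So$. Given $y\in S$, form $\td y\in\So$ with $\td y_0=t>0$; then $0\le g^{\h}(\td y)=t^{\deg(g)}g(y)$, and dividing by $t^{\deg(g)}>0$ gives $g(y)\ge 0$. This closes the argument.

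I would remark that the reason one must pass to the closure of $\So$ rather than $\So$ itself is purely to obtain a \emph{closed} index set suitable for the compactness hypotheses used later; the equivalence of nonnegativity is already captured on $\So$, and the closure direction is the trivial one by continuity. The set $\St$ is introduced because $\So\subseteq\St\subseteq{\sf closure}(\So)$ is not automatic---$\St$ is closed and contains $\So$, so ${\sf closure}(\So)\subseteq\St$, but the reverse inclusion ${\sf closure}(\So)=\St$ is exactly where genericity of $S$ enters (boundary points with $y_0=0$ of $\St$ need to be approximable from within $\So$), though that refinement is not needed for the present proposition.

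I do not expect any serious obstacle here: the only points requiring a little care are (a) checking that the sign of $g_j^{\h}(\td y)$ genuinely matches that of $g_j(y)$, which follows from $\td y_0>0$ and the homogeneity identity, and (b) handling the closure via continuity of the polynomial $g^{\h}$. The one genuinely substantive fact being used---that the map $y\mapsto\td y$ identifies $S$ with $\So$---is immediate from the explicit formulas. Everything else is routine scaling.
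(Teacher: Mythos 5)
Your proof is correct, and since the paper states this proposition only by citation to \cite[Proposition 4.2]{SIPPSDP} without reproducing a proof, your direct scaling-plus-continuity argument is exactly the standard one and fills that gap appropriately. The only blemish is the phrase ``bijection from $\RR^m$ onto $\So$'': the normalization map $y\mapsto t(1,y)$ is a bijection from $\RR^n$ onto the \emph{open upper hemisphere} $\{y_0>0,\ \Vert\td{y}\Vert_2^2=1\}$, and it is its restriction to $S$ that is a bijection onto $\So$ (which your sign computation in fact establishes), so this is a misstatement rather than a gap.
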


Let $d_y:=\deg_y(p(x,y))$ and $p^{\h}(x,\td{y})$ be the homogenization of
$p(x,y)$ with respect to the variables $y$. 
It follows that the set $\K$ in $(\ref{eq::K})$ is equivalent to
\[
	\{x\in\RR^m\mid p^{\h}(x,\td{y})\ge 0,\ \ \forall y\in {\sf
	closure}(\So)\}.
\]
Replacing ${\sf closure}(\So)$ by the basic semialgebraic set
$\St$, we get the following set  
\[
	\wt{\K}:=\{x\in\RR^m\mid p^{\h}(x,\td{y})\ge 0,\ \ \forall y\in\St\}.
\]
It is obvious that $\wt{\K}\subseteq\K$ since ${\sf closure}(\So)\subseteq\St$.
\begin{defi}{\upshape(\cite{exactJacNie})}
	$S$ is said to be closed at $\infty$ if ${\sf closure}(\So)=\St$.
\end{defi}
\begin{remark}\label{re::closedness}{\rm
	Clearly, $\K=\wt{\K}$ when $S$ is closed at $\infty$.
Note that not every set $S$ of form $(\ref{eq::S})$ is closed at $\infty$
even when it is compact \cite[Example 5.2]{NieDisNon}.
However, it is shown in \cite[Theorem 4.10]{SIPPSDP} that the
closedness at $\infty$ is a generic property.
It follows that $\K=\wt{\K}$ for generic index sets $S$.
Note that $\So$ depends only on $S$, while $\St$ depends not
only on $S$ but also on the
choice of the inequalities
$g_1(y)\ge 0, \ldots, g_s(y)\ge 0$. In some cases, we can add some
redundant inequalities in the description of $S$
to force it to be closed at $\infty$ (c.f. \cite{SIAMGWZ}).}
\end{remark}

For any polynomial $g(y)\in\RR[y]$,
denote $\hat{g}(y)$ as its homogeneous part
of the highest degree.  Define
\begin{equation}\label{eq::hatS}
	\widehat{S}:=\{y\in\RR^n\mid \hat{g}_1(y)\ge 0,
	\ldots, \hat{g}_s(y)\ge 0, \Vert y\Vert_2^2=1\}.
\end{equation}
In particular,
denote $\hat{p}(x,y)$ as the homogeneous parts of $p(x,y)$ with
respect to $y$ of the highest degree $d_y$.

\begin{definition}\label{def::slater2}
	We say that the extended Slater condition holds for $\K$
	if there exists a point $u\in\RR^m$ of
	$\K$ such that $p(u,y)>0$
	for all $y\in S$ and $\hat{p}(u,y)>0$
	for all $y\in\widehat{S}$. We call $u$ an extended Slater point 
	of $\K$. 
\end{definition}
\begin{prop}\label{prop::eq}
	The Slater condition holds for $\wt{\K}$ 
	if and only if the extended Slater condition holds for 
	$\K$.
\end{prop}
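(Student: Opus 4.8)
The plan is to establish the equivalence by carefully relating the two Slater points through the homogenization construction. The key observation is that $\St = \overline{\So} \cup (\St \cap \{y_0 = 0\})$, and on the piece $\St \cap \{y_0 = 0\}$ the homogenized polynomial $p^{\h}(x,\td y)$ reduces precisely to the top-degree part $\hat p(x,y)$ (in the $y$-variables), while $\St \cap \{y_0 = 0\}$ projects onto $\widehat S$. So positivity of $p^{\h}$ on all of $\St$ splits into positivity on $\overline{\So}$ (which by Proposition \ref{prop::equivalent} is equivalent to $p(u,y) > 0$ on $S$) and positivity of $\hat p$ on $\widehat S$. This is essentially the content we want, but one has to be careful about the closure and about which degree is used when homogenizing.

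First I would make precise the degree bookkeeping: $p^{\h}(x,\td y) = y_0^{d_y} p(x, y/y_0)$ where $d_y = \deg_y p$, so writing $p(x,y) = \sum_{|\af| \le d_y} p_\af(x) y^\af$ gives $p^{\h}(x,\td y) = \sum_{|\af|\le d_y} p_\af(x) y_0^{d_y - |\af|} y^\af$, and setting $y_0 = 0$ kills every term with $|\af| < d_y$, leaving $\sum_{|\af| = d_y} p_\af(x) y^\af = \hat p(x,y)$. Likewise for each $g_j$: on $\{y_0 = 0\} \cap \{\|\td y\|_2^2 = 1\}$ one has $g_j^{\h}(\td y) = \hat g_j(y)$ with $\|y\|_2 = 1$, so the slice $\St \cap \{y_0 = 0\}$ is exactly $\{(0,y) : y \in \widehat S\}$.

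Next, for the ``only if'' direction: suppose $u$ is a Slater point for $\wt\K$, i.e. $u \in \wt\K$ and $p^{\h}(u,\td y) > 0$ for all $\td y \in \St$. Restricting to $\overline{\So} \subseteq \St$ and invoking Proposition \ref{prop::equivalent} (applied to $g(\cdot) = p(u,\cdot)$, noting $\deg_y p(u,y) \le d_y$ so the homogenization matches up to a harmless factor of $y_0^{d_y - \deg_y p(u,\cdot)}$ which is positive on $\So$) yields $p(u,y) > 0$ on $S$; in particular $u \in \K$. Restricting instead to the slice $\{y_0 = 0\}$ gives $\hat p(u,y) > 0$ on $\widehat S$. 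Hence $u$ is an extended Slater point for $\K$. For the ``if'' direction I would run this in reverse: given an extended Slater point $u$ for $\K$, I need $p^{\h}(u,\td y) > 0$ on all of $\St$. On the open part $\So$ this follows from $p(u,y) > 0$ on $S$ (rescaling, since $p^{\h}(u,\td y) = y_0^{d_y} p(u, y/y_0)$ and $y_0 > 0$); on the slice $y_0 = 0$ it follows from $\hat p(u,y) > 0$ on $\widehat S$; and then a compactness/continuity argument extends positivity from $\So \cup (\St \cap \{y_0=0\})$ to the closure $\St$ — since $p^{\h}(u,\cdot)$ is continuous and $\St$ is compact, and $\St = \overline{\So} \cup (\St \cap \{y_0 = 0\})$, a limit point of $\So$ in $\St$ either lies in $\{y_0 = 0\}$ (covered) or has $y_0 > 0$ (covered by continuity of the rescaling). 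I should also confirm $u \in \wt\K$, which is immediate from $p(u,y) > 0$ on $S$ and $\wt\K \subseteq \K$ plus $\hat p(u,\cdot) \ge 0$ on $\widehat S$ giving $p^{\h}(u,\cdot) \ge 0$ on $\St$.

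The main obstacle I anticipate is the topological bookkeeping around $\St$ versus $\overline{\So}$: a priori $\St$ may be strictly larger than $\overline{\So}$ (when $S$ is not closed at $\infty$), and points of $\St$ with $y_0 > 0$ that are not limits of $\So$-points must still be handled — but in fact any $\td y \in \St$ with $y_0 > 0$ automatically has $y/y_0 \in S$ (since $g_j^{\h}(\td y) \ge 0$ and $y_0 > 0$ force $g_j(y/y_0) \ge 0$), so such points are limits of (indeed members of, after rescaling) $\So$, and the decomposition $\St = \overline{\So} \cup (\St \cap \{y_0=0\})$ is clean. The only genuinely delicate point is ensuring the strict inequalities survive passage to the closure, which is where compactness of $\St$ (guaranteed by the constraint $\|\td y\|_2^2 = 1$) does the work. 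I'd keep the proof short, citing Proposition \ref{prop::equivalent} for the $\So$-part and doing the slice computation explicitly for the $y_0 = 0$ part.
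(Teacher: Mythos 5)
Your proof is correct and follows essentially the same route as the paper's: the pointwise correspondence sending $\td v\in\St$ with $v_0>0$ to $v/v_0\in S$ and $\td v$ with $v_0=0$ to $v\in\widehat S$, together with its inverse via normalization. The closure/compactness discussion and the appeal to Proposition~\ref{prop::equivalent} are unnecessary --- since $\So=\St\cap\{y_0>0\}$ exactly, the decomposition $\St=\So\cup(\St\cap\{y_0=0\})$ requires no limit argument, and Proposition~\ref{prop::equivalent} only concerns nonnegativity anyway; the direct rescaling identity $p^{\h}(u,\td y)=y_0^{d_y}p(u,y/y_0)$ that you also write down is what actually carries the strict inequalities.
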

\begin{proof}
	Suppose that $u$ is an extended Slater point of $\K$.
	For any $\td{v}=(v_0,v)\in\St$, we have $v\in\widehat{S}$ if
	$v_0=0$ and $v/v_0\in S$ otherwise. It is straightforward to
	verify that the Slater condition also holds for $\wt{\K}$
	at $u$.

	Suppose that the Slater condition holds for $\wt{\K}$ at
	$u\in\RR^m$.
	For any point $v\in\RR^n$, we have $(0,v)\in\St$ if $v\in\widehat{S}$
	and $\left(\frac{1}{\sqrt{1+\Vert v\Vert_2^2}},
	\frac{v}{\sqrt{1+\Vert v\Vert_2^2}}\right)\in\St$ if $v\in S$.
	Then similarly, it implies that the extended Slater condition 
	holds for $\K$ at $u$.
\end{proof}
\vskip 5pt 
As a result of the above arguments, it is reasonable to consider the
following assumption in the rest of this paper.
\begin{assumption}\label{ass}
	The set $S$ is compact, $-p(x,y)\in\RR[x]$ is convex for any $y\in
	S$ and the Slater condition holds for $\K$.
\end{assumption}

\subsection{Approximate semidefinite representations of
$\K$}\label{subsec::ASR}
We assume that $\K$ in (\ref{eq::K}) is compact and a scalar $\tau_\K$
such that $\Vert x\Vert_2\le\tau_\K$ for any $x\in\K$ is known. 
For $r\in\N$, define
\[
	\Theta_{r}(x)=\sum_{i=1}^m\left(\frac{x_i}{\tau_\K}\right)^{2r}\in\RR[x].
\]
It is clear that $\Theta_r(x)\le 1$ for any $x\in\K$ and $r\in\N$. 
Denote by $\mathbf{B}$ the unit ball in $\RR^m$. 
Recall the notation $d_S$ in (\ref{eq::d}) and let
\[
	d_x=\deg_x(p(x,y)),\quad d_y=\deg_y(p(x,y))\quad\text{and}\quad 
	d_{\K}:=\max\{\lceil d_y/2\rceil, d_S\}. 
\]

For $\mL\in(\RR[x])^*$ (resp., $\mH\in(\RR[y])^*$), denote by
$\mL(p(x,y))$ (resp., $\mH(p(x,y))$) the image of $\mL$ (resp., $\mH$)
on $p(x,y)$ regarded as an element in $\RR[x]$ (resp., $\RR[y]$) with
coefficients in $\RR[y]$ (resp., $\RR[x]$), i.e.,
$\mL(p(x,y))\in\RR[y]$ (resp., $\mH(p(x,y)))\in\RR[x]$). Hence, some  
notation, like $\mH(\mL(p(x,y)))$, should cause no confusion once
the dual spaces where the linear fuctionals $\mL$ and $\mH$ come from
are specified in the context.

\begin{theorem}\label{th::main}
	Suppose that $\K$ is compact. For any integers
	$r\ge\lceil d_x/2\rceil$ and $t\ge d_{\K}$, define
\begin{equation}\label{eq::lambda}
	\Lambda_{r,t}:=\left\{(\mL(x_1),\ldots,\mL(x_m))\in\RR^m:
	\left\{
		\begin{aligned}
			&\mL\in(\RR[x]_{2r})^*,\ 
			\mL(1)=1,\\
			&\mL(q^2)\ge 0, \
			\forall q\in\RR[x]_r,\\
			&\mathscr{L}(\Theta_k)\le 1,\ k=\lceil d_x/2\rceil,\ldots,r,\\
			&\mL(p(x,y))\in\qm_t(G).
		\end{aligned}\right.\right\}.
	\end{equation}
Then, $\Lambda_{r_2,t}\subseteq\Lambda_{r_1,t}$ for any
			$r_2>r_1\ge\lceil d_x/2\rceil$
			and $\Lambda_{r,t_2}\supseteq\Lambda_{r,t_1}$ for any
			$t_2>t_1\ge d_{\K}$.
			If Assumption \ref{ass} holds, then the
			following are true.
	\begin{enumerate}[\upshape (i)]
		\item For any $\varepsilon>0$, there exists an integer 
			$r(\varepsilon)\ge\lceil d_x/2\rceil$ such that for every
			$r\ge r(\varepsilon)$
			and $t\ge d_{\K}$, it holds that 
			$\Lambda_{r,t}\subseteq\K+\varepsilon\mathbf{B}$. 
		    If $\qm(G)$ is Archimedean, then 
			there exists integer $t(\varepsilon)\ge d_{\K}$ such that for
			every $r\ge\lceil d_x/2\rceil$ and $t\ge t(\varepsilon)$, 	
			it holds that
			$\K\subseteq\Lambda_{r,t}+\varepsilon\mathbf{B}$.  		
			Consequently,
			$\Lambda_{r,t}$ converges to $\K$ as $r$ and $t$ both tend
			to $\infty$;
		\item If 
			the Lagrangian $L_f(x)$ as defined in $(\ref{eq::lag})$ is s.o.s for every linear $f\in\RR[x]$, 
			then 
			$\K\supseteq\Lambda_{r,t_2}\supseteq\Lambda_{r,t_1}$
			for any $r\ge\lceil d_x/2\rceil$, $t_2>t_1\ge d_{\K}$.
			For any $\varepsilon>0$, if moreover, $\qm(G)$ is
			Archimedean, then there exists integer
			$t(\varepsilon)\ge d_{\K}$ such that
			$\K\subseteq\Lambda_{r,t}+\varepsilon\mathbf{B}$ for any
			$r\ge\lceil d_x/2\rceil$, $t\ge t(\varepsilon)$.
			Consequently,  
			$\Lambda_{r,t}$ converges to $\K$ as $t$ tends to
			$\infty$ for any $r\ge\lceil d_x/2\rceil$.
	\end{enumerate}
\end{theorem}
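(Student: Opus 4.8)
The plan is to establish the theorem in three parts: the monotonicity claims first (which are purely structural), then claim (i) (the general convergence result), then claim (ii) (the improvement under the s.o.s.\ hypothesis on Lagrangians).

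For the monotonicity, the inclusion $\Lambda_{r,t_1}\subseteq\Lambda_{r,t_2}$ for $t_1<t_2$ is immediate since $\qm_{t_1}(G)\subseteq\qm_{t_2}(G)$ (the membership $\mL(p(x,y))\in\qm_t(G)$ is the only $t$-dependent constraint, and enlarging $t$ only relaxes it). For $\Lambda_{r_2,t}\subseteq\Lambda_{r_1,t}$ when $r_1<r_2$, given $\mL\in(\RR[x]_{2r_2})^*$ meeting the constraints, I would restrict it to $(\RR[x]_{2r_1})^*$; the normalization, the positivity on squares of degree $\le r_1$, the bounds $\mL(\Theta_k)\le 1$ for $k\le r_1$, and the membership $\mL(p(x,y))\in\qm_t(G)$ are all inherited (note $\deg_x p = d_x\le 2r_1$ so the restriction still sees all of $p$). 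Thus the coordinate projections agree, giving the inclusion.

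For claim (i), the first half — $\Lambda_{r,t}\subseteq\K+\varepsilon\mathbf{B}$ for $r$ large — is the heart of the matter and where the construction from the introduction is used. Fix a unit vector $a\in\RR^m$ and a supporting value: since $\K$ is compact convex, it suffices to show that the support function of $\Lambda_{r,t}$ in direction $a$ is within $\varepsilon$ of that of $\K$, uniformly in $a$. Take the linear function $f(x)=\langle a,x\rangle - c$ where $c=\max_{x\in\K}\langle a,x\rangle$, so $f\le 0$ on $\K$, i.e.\ $-f\ge 0$ on $\K$; by Corollary \ref{cor::red} applied to the convex function $-f$ (here the Slater condition and compactness of $S$ from Assumption \ref{ass} are exactly what is needed), the Lagrangian $L_{-f}(x) = -f(x) - (-f)^* - \sum_i\lambda_i p(x,y_i) = -\langle a,x\rangle + c - \sum_i\lambda_i p(x,y_i)$ (using $(-f)^*=0$ since $c$ is the max) is nonnegative on all of $\RR^m$, vanishes with zero gradient at some $u$, and the $\lambda_i\ge0$. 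Now apply Theorem \ref{th::perturbation}: for $r$ large enough, $L_{-f}(x) + \varepsilon'(1+\Theta_r(x))$ is s.o.s.\ — here I need to track that the perturbation level $\varepsilon^*_r$ can be chosen uniformly over all directions $a$, which follows because the family of Lagrangians $L_{-f}$ (parametrized by $a\in S^{m-1}$) is contained in a bounded set of polynomials of bounded degree once one fixes $\tau_\K$ and uses compactness; the finitely-many $y_i$ live in the compact $S$ so the $\sum\lambda_i p(x,y_i)$ part is controlled. Then, for any $\mL\in\Lambda_{r,t}$, apply $\mL$: since $\mL$ is nonnegative on squares of degree $\le r$, $\mL(L_{-f}+\varepsilon'(1+\Theta_r))\ge 0$; using $\mL(1)=1$, $\mL(\Theta_r)\le1$, $\mL(p(x,y))\in\qm_t(G)$ hence $\mL(p(x,y_i))=\mH$-evaluation at $y_i\in S$ is $\ge0$ — wait, more carefully: $\mL(\sum\lambda_i p(x,y_i)) = \sum\lambda_i\,[\mL(p(x,\cdot))](y_i)$ and $\mL(p(x,y))\in\qm_t(G)$ implies it is $\ge0$ on $S\ni y_i$, so this term is $\ge0$ — we get $-\langle a, (\mL(x_1),\dots,\mL(x_m))\rangle + c + 2\varepsilon' \ge 0$, i.e.\ $\langle a, \mL(x)\rangle \le c + 2\varepsilon'$. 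Choosing $\varepsilon' = \varepsilon/2$ and intersecting the supporting half-spaces over all $a$ yields $\Lambda_{r,t}\subseteq \K+\varepsilon\mathbf{B}$.

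For the reverse inclusion $\K\subseteq\Lambda_{r,t}+\varepsilon\mathbf{B}$ when $\qm(G)$ is Archimedean and $t$ is large: given $x^*\in\K$, I want a nearby point realized as the moment vector of some admissible $\mL$. The natural candidate is (a perturbation of) the Dirac evaluation $\mL = \delta_{x^*}$ extended to $(\RR[x]_{2r})^*$ by $\mL(q)=q(x^*)$. Then $\mL(1)=1$, $\mL(q^2)=q(x^*)^2\ge0$, $\mL(\Theta_k)=\Theta_k(x^*)\le1$ (since $x^*\in\K$, $\|x^*\|_2\le\tau_\K$), and $\mL(p(x,y)) = p(x^*,y)\in\RR[y]$ which is $\ge0$ on $S$; the only missing piece is $p(x^*,y)\in\qm_t(G)$, which need not hold since $p(x^*,\cdot)$ is merely nonnegative (not positive) on $S$. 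To fix this I would use the Slater point $u\in\K$ with $p(u,y)>0$ on $S$: set $x_\lambda = (1-\lambda)x^* + \lambda u$ for small $\lambda>0$; by convexity of $-p(\cdot,y)$ we get $p(x_\lambda,y)\ge (1-\lambda)p(x^*,y)+\lambda p(u,y) \ge \lambda\,\min_{y\in S}p(u,y) > 0$ on $S$, so $p(x_\lambda,\cdot)$ is strictly positive on $S$; by Putinar (Theorem \ref{th::PP}), $p(x_\lambda,\cdot)\in\qm_{t}(G)$ for $t$ large (depending on $\lambda$, hence on $\varepsilon$ — Theorem \ref{th::complexity} could be cited if an explicit bound is wanted, but it is not needed). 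Taking $\mL=\delta_{x_\lambda}$ gives a point of $\Lambda_{r,t}$ within $\lambda\|u-x^*\|_2 \le \lambda\cdot 2\tau_\K$ of $x^*$; choose $\lambda$ so this is $<\varepsilon$. One subtlety: $t$ depends on $\lambda$ which depends on the point $x^*$; to get a uniform $t(\varepsilon)$ I use compactness of $\K$ — cover $\K$ by finitely many $\varepsilon/2$-balls, handle each center, take the max of the finitely many thresholds $t$. Combining both halves gives convergence of $\Lambda_{r,t}$ to $\K$ as $r,t\to\infty$.

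For claim (ii): the hypothesis is that $L_f$ is s.o.s.\ for every \emph{linear} $f$. Then in the first argument above, for $f(x)=\langle a,x\rangle-c$, the Lagrangian $L_{-f}$ (associated to $-f$, also linear) is already s.o.s.\ with \emph{no perturbation needed}: $L_{-f}\in\Sigma[x]$, so for any $\mL\in\Lambda_{r,t}$ with $r\ge\lceil d_x/2\rceil$ (enough to see squares up to the degree of $L_{-f}$, which is $\max(1,d_x)\le 2r$), $\mL(L_{-f})\ge0$, giving directly $\langle a,\mL(x)\rangle\le c$; hence $\Lambda_{r,t}\subseteq\K$ for all $r,t$. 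Monotonicity in $t$ was already shown, yielding the chain $\K\supseteq\Lambda_{r,t_2}\supseteq\Lambda_{r,t_1}$. The reverse inclusion $\K\subseteq\Lambda_{r,t}+\varepsilon\mathbf{B}$ for $t\ge t(\varepsilon)$ is exactly as in (i), and crucially its proof never needed $r$ large — it only used the Dirac functional — so it holds for every $r\ge\lceil d_x/2\rceil$. Therefore $\Lambda_{r,t}\to\K$ as $t\to\infty$ with $r$ fixed arbitrarily.

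**The main obstacle** I anticipate is the uniformity in the direction $a$ in the first half of (i): Theorem \ref{th::perturbation} as stated gives, for a \emph{single} polynomial $h$ nonnegative on $[-1,1]^m$, an order $r(h,\varepsilon)$, and I need a single $r(\varepsilon)$ working for the whole sphere's worth of Lagrangians $L_{-f}$. The resolution is to check that $\{L_{-f} : a\in S^{m-1}\}$ — after rescaling $x$ by $\tau_\K$ so that $\K\subseteq[-1,1]^m$, which is why $\Theta_r$ is defined with the $\tau_\K$ normalization — forms a compact family in a fixed finite-dimensional space $\RR[x]_{\deg}$ of polynomials, on which $h\mapsto\varepsilon^*_r(h)$ is upper semicontinuous and decreases to $0$ pointwise, hence (Dini) uniformly; the $\lambda_i$ and $y_i$ from Corollary \ref{cor::red} can be bounded uniformly using that they solve the dual of a discretization problem whose data vary continuously and boundedly with $a$. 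A secondary, milder point is handling the coordinate-projection bookkeeping when restricting $\mL$ to a lower-degree space in the monotonicity proof — routine but worth stating carefully so that the $\Theta_k$-bounds for $\lceil d_x/2\rceil\le k\le r_1$ are genuinely inherited.
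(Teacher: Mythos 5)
Your proposal is correct and follows the same skeleton as the paper's proof: monotonicity by restricting the functional and nesting the quadratic modules; the outer containment $\Lambda_{r,t}\subseteq\K+\varepsilon\mathbf{B}$ via the Lagrangian of a supporting linear functional (Corollary \ref{cor::red}), Lasserre's perturbation theorem, and an application of the admissible $\mL$ to the resulting identity; the inner containment via Dirac functionals at convex combinations with a Slater point together with Putinar's Positivstellensatz; and part (ii) by observing that the s.o.s.\ hypothesis removes the need for the perturbation term. Your support-function phrasing of the outer containment is equivalent to the paper's separation of a point $v\notin\K+\varepsilon\mathbf{B}$, and your resolution of the uniformity-in-$a$ issue (boundedness of the multipliers via a Slater point, $0\le\lambda_i\le 2\tau_\K/\min_{y\in S}p(u_0,y)$, plus compactness of $S$ to control the coefficients of $p(x,y_i)$) is exactly the paper's. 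The one genuine divergence is how $t(\varepsilon)$ is made uniform over $\K$ in the inner containment: the paper invokes the quantitative Nie--Schweighofer complexity bound (Theorem \ref{th::complexity}), after checking that $\Vert p(\bar u,\cdot)\Vert$ and $\min_{y\in S}p(\bar u,y)$ are controlled uniformly in $\bar u$, whereas you cover the compact set $\K$ by finitely many $\varepsilon/2$-balls and take the maximum of the finitely many Putinar orders at the perturbed centers. Your route is more elementary, needing only the qualitative Positivstellensatz together with monotonicity of $\qm_t(G)$ in $t$; the paper's route buys an explicit (if enormous) formula for $t(\varepsilon)$. Both are valid, and your flagged secondary points (degree bookkeeping $\deg(\td q^2)\le 2r$, the rescaling by $\tau_\K$ so that the perturbation theorem on $[-1,1]^m$ applies) are precisely the details the paper also records.
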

\begin{proof}
	For a fixed $x\in\Lambda_{r_2,t}$, there exists
	$\mL\in(\RR[x]_{2r_2})^*$ 
	satisfying conditions in (\ref{eq::lambda}) for $\Lambda_{r_2,t}$. Let 
	$\mL'$ be the restriction of $\mL$ on $\RR[y]_{2r_1}$. 
	Then, it is clear that $\mL'$ satisfies all conditions in
	(\ref{eq::lambda}) for $\Lambda_{r_1,t}$ and thus
	$x\in\Lambda_{r_1,t}$. Similarly, if $x\in\Lambda_{r,t_1}$, then 
	$x\in\Lambda_{r,t_2}$ for any $t_2>t_1\ge d_{\K}$.
	 
	(i).  Fix an $\varepsilon>0$ and a point
	$v\not\in\K+\varepsilon\mathbf{B}$. Now we prove that there is
	some integer $r(\varepsilon)$ that does not depend on $v$ such
	that $v\not\in\Lambda_{r,t}$ for every $r\ge r(\varepsilon)$ and
	$t\ge d_{\K}$, which implies that
	$\Lambda_{r,t}\subseteq\K+\varepsilon\mathbf{B}$. By
	\cite[Lemma 5]{convexsetLasserre}, there exist $a\in\RR^m$ and
	$b=\min_{x\in\K} a^Tx$ statisfying $\Vert a\Vert_2=1$ and $\vert
	b\vert\le\tau_\K$ such that $a^Tx-b\ge 0$ for any $x\in\K$ and
	$a^Tv-b<-\varepsilon$. 
	Consider the optimization problem $\min_{x\in\K} a^Tx-b$. By
	Corollary \ref{cor::red}, the associated Lagrangian
	$L_{a,b}(x):=a^Tx-b-\sum_{j=1}^l\lambda_j p(x,y_l)$ as defined
	in (\ref{eq::lag}) is nonnegative on $\RR^m$ for some
	$y_1,\ldots,y_l\in S$ and nonnegative
	$\lambda_1,\ldots,\lambda_l\in\RR$. In particular, $L_{a,b}$ is nonnegative on
	$[-\tau_{\K},\tau_{\K}]^m$. 
	By Theorem \ref{th::perturbation} (iii), there is some integer
	$r(\varepsilon)\ge\lceil d_x/2\rceil$ such that for any $r\ge
	r(\varepsilon)$, it holds that 
	\begin{equation}\label{eq::ab}
		a^Tx-b+\frac{\varepsilon}{2}(1+\Theta_r)=\td{q}^2+\sum_{j=1}^l \lambda_j p(x,y_j)
	\end{equation}
	for some $\td{q}\in\RR[x]$. As $r\ge
	r(\varepsilon)\ge\lceil d_x/2\rceil$, we have $\deg(\td{q}^2)\le 2r$.
	Now we show that $r(\varepsilon)$ does
	not depend on $v$.  According to \cite[Sec. 3.3]{Lasserre05},
	$r(\varepsilon)$ depends on $\varepsilon$, the dimension $m$ and
	the size of $a$, $b$, $\lambda_j$'s and the coefficients
	$p(x,y_j)$ regarded as polynomials in $\RR[x]$. Fix a Slater point
	$u_0\in\K$, since $a^Tu_0-b-\sum_{j=1}^l\lambda_j p(u_0,y_j)\ge
	0$, as proved in \cite[Lemma 7]{convexsetLasserre}, we have
	\[
		0\le
		\lambda_j\le\frac{a^Tu_0-b}{p(u_0,y_j)}\le\frac{2\tau_\K}{p(u_0,y_j)}
		\le\frac{2\tau_\K}{\min_{j=1,\ldots,l}p(u_0,y_j)}\le\
		\frac{2\tau_\K}{\min_{y\in
		S}p(u_0,y)}\le\frac{2\tau_\K}{p^*_{u_0}},
	\]
	where 
	$p_{u_0}^*:=\min_{y\in S}p(u_0,y)>0$ since $u_0$ is a Slater point
	and $S$ is compact. Write
	$p(x,y_j)=\sum_{\alpha}p_{x,\alpha}(y_j)x^\alpha$, then 
	$p_{x,\alpha}(y_j)\le\max_\alpha\max_{y\in S}p_{x,\alpha}(y)$. 
	Hence, all $a$, $b$, $\lambda_j$'s and $p_{x,\alpha}(y_j)$'s are
	uniformly bounded, which means that $r(\varepsilon)$ does not
	depend on $v$. For any $r\ge r(\varepsilon)$ and $t\ge d_{\K}$, 
	to the contrary, assume that $v\in\Lambda_{r,t}$.
	Then, there exists $\mL$ satisfying the conditions in
	(\ref{eq::lambda}) for $\Lambda_{r,t}$ with
	$\mathscr{L}(x_i)=v_i$. 
	Let $\mu=\sum_{j=1}^l\lambda_j\delta_{y_l}$ where $\delta_{y_l}$
	denotes the Dirac measure at $y_l$. As $\deg(\td{q}^2)\le 2r$,
	it holds that
	\begin{equation}\label{eq::contradiction}
		\begin{aligned}
			0>
			a^Tv-b+\varepsilon&=\mathscr{L}(a^Tx-b)+\varepsilon\\
			&\ge\mathscr{L}(a^Tx-b)+\frac{\varepsilon}{2}\mathscr{L}(1+\Theta_r)\\
			&=\mathscr{L}\left(\td{q}^2+\int_{S}p(x,y)d\mu(y)\right)\\
			&=\mathscr{L}(\td{q}^2)+\int_S\mathscr{L}(x,y)d\mu(y)\ge
			0,\\
		\end{aligned}
	\end{equation}
	which is a contradiction.  Thus, $v\not\in\Lambda_{r,t}$ and
	$\Lambda_{r,t}\subseteq\K+\varepsilon\mathbf{B}$. 
	
	Fix a Slater point $u_0\in\K$. Let
	$u\in\K$ be arbitrary. Now we first prove
	that there exist a point $\bar{u}\in\K$ and an integer $t(\varepsilon)$
	that does not depend on $u$ (in fact, it depends on $\varepsilon, \K, S, u_0,
	p(x,y), g_j$'s) such that $\Vert u-\bar{u}\Vert_2\le\varepsilon$ and
	$\bar{u}\in\Lambda_{r,t}$ for every $r\ge\lceil d_x/2\rceil$ and
	$t\ge t(\varepsilon)$, which implies that
	$\K\subseteq\Lambda_{r,t}+\varepsilon\mathbf{B}$. 
	If $\Vert u-u_0\Vert_2\le\varepsilon$, then let $\bar{u}=u_0$;
	otherwise,
	let $\lambda=\varepsilon/{\Vert u_0-u\Vert_2}$ and
	$\bar{u}=\lambda u_0+(1-\lambda)u$, then we have
	$1>\lambda\ge\frac{\varepsilon}{2\tau_\K}$,
	$\Vert u-\bar{u}\Vert_2=\lambda\Vert u_0-u\Vert_2=\varepsilon$ and  
	\[
		\begin{aligned}
			p(\bar{u},y)&\ge \lambda
			p(u_0,y)+(1-\lambda)p(u,y)&\ &[\text{as $-p(x,y)$ is
				convex in $x$}]\\
			&\ge\lambda p(u_0,y).&\ &[\text{as $u\in\K$}]\\
		\end{aligned}
\]
Let $\kappa(\varepsilon):=\min\{\frac{\varepsilon}{2\tau_\K},1\}$. 
Then, in either case, it follows that
\[
	p(\bar{u},y)\ge\kappa(\varepsilon)p(u_0,y)\ge\kappa(\varepsilon)p_{u_0}^*>0
\]
for any $y\in S$. 
Write $p(\bar{u},y)=\sum_\beta p_{y,\beta}(\bar{u})y^\beta\in\RR[y]$.
Recall the norm defined in (\ref{eq::fnorm}), then 
\[
	\Vert
	p(\bar{u},y)\Vert=\max_\beta\frac {\vert
		p_{y,\beta}(\bar{u})\vert}{\binom{|\beta|}{\beta}}
	\le \max_{\beta}\frac{\max_{x\in\K}
	\vert p_{y,\beta}(x)\vert}{\binom{|\beta|}{\beta}}
	=:N_p. 
\]
As $\K$ is compact, $N_p$ is well-defined. 
Note that $N_p$ does not depend on $u$ but only on $p$ and $\K$. 
By Theorem \ref{th::complexity}, there exists come positive $c$
depending on $g_j$'s such that $p(\bar{u},y)\in\qm_t(G)$ whenever
\[
	t\ge
	c\exp\left[\left(d_y^2n^{d_y}\frac{N_p\tau_S^{d_y}}
	{\kappa(\varepsilon)p_{u_0}^*}\right)^c\right]=:t(\varepsilon).
\]
	For any $r\ge\lceil d_x/2\rceil$, define a linear functional
	$\mL\in(\RR[x]_{2r})^*$ by $\mL(x^\alpha)=\bar{u}^\alpha$ for all
	$\alpha\in\N^m_{2r}$. 
	Then, it is clear that
	$\mathscr{L}(x_i)=\bar{u}_i$ for $i=1,\ldots,m$, 
	$\mathscr{L}(\Theta_k)\le 1$ for $k=\lceil
	d_x/2\rceil,\ldots,r$ and $\mL(q^2)\ge 0$ for all $q\in\RR[x]_r$. 
	We have $\mL(p(x,y))=p(\bar{u},y)$. 
It implies that $\bar{u}\in\Lambda_{r,t}$ and thus
$\K\subseteq\Lambda_{r,t}+\varepsilon\mathbf{B}$ for every $r\ge\lceil
d_x/2\rceil$ and $t\ge t(\varepsilon)$. 

(ii). By (i), we only need to prove $\Lambda_{r,t}\subseteq\K$ for any
$r\ge\lceil d_x/2\rceil$ and $t\ge d_{\K}$. 
Fix a point $v\not\in\K$. By the Separation Theorem of convex
sets, there exist $a\in\RR^m$ and $b\in\RR$ such that $a^Tx-b\ge 0$
for any $x\in\K$ and $a^Tv-b<0$. As proved in (i), there are some
$y_1,\ldots,y_l\in S$ and nonnegative
$\lambda_1,\ldots,\lambda_l\in\RR$ such that
$a^Tx-b-\sum_{j=1}^l\lambda_j p(x,y_l)$ is nonnegative on $\RR^m$. 
Since 
the associated Lagrangian $L_f(x)$ is s.o.s for every linear function $f$, 
we have  
\begin{equation}\label{eq::ab2}
	a^Tx-b=\td{q}^2+\sum_{j=1}^l \lambda_j p(x,y_j)
\end{equation}
for some $\td{q}\in\RR[x]$. To the contrary, assume that
$v\in\Lambda_{r,t}$. Then, there exist $\mL$
satisfying the conditions in (\ref{eq::lambda}) for $\Lambda_{r,t}$. 
Define $\mu$ as in (i). Like in (\ref{eq::contradiction}), we get 
that 
\begin{equation}\label{eq::contradiction2}
		0> a^Tv-b=\mathscr{L}(a^Tx-b)=\mathscr{L}(\td{q}^2)+
		\int_S(\mL(p(x,y)))d\mu(y)\ge 0,
\end{equation}
which is a contradiction.  Thus, $v\not\in\Lambda_{r,t}$ and hence
$\Lambda_{r,t}\subseteq\K$. 
\end{proof}
\begin{remark}\label{rk::simplification}{\rm
	(i). According to the proof, the conclusions (i) and (ii) in Theorem
	\ref{th::main} are still true if we simplify the condtion
	$\mathscr{L}(\Theta_k)\le 1,\ k=\lceil
	d_x/2\rceil,\ldots,r$ in $(\ref{eq::lambda})$ by
	$\mathscr{L}(\Theta_r)\le 1$.
 
	(ii). In practice, we can let $r=t$ in $\Lambda_{r,t}$ and
	approximate $\K$ by one sequence $\{\Lambda_{r,r}\}$. 
	Suppose that $\qm(G)$ is Archimedean,
	then by Theorem \ref{th::main} (i), for any
	$\varepsilon>0$, there exists $r\ge\max\{\lceil d_x/2\rceil,
		d_{\K}\}$ such that
		$\Lambda_{r,r}\subseteq\K+\varepsilon\mathbf{B}$ and
		$\K\subseteq\Lambda_{r,r}+\varepsilon\mathbf{B}$. That is,
		$\{\Lambda_{r,r}\}$ can approximate $\K$ as closely as
	possible as $r$ increases. 

	(iii).  If $S$ is compact but $\qm(G)$ is not Archimedean,
	then the set $\qm_t(G)$ in the definition of $\Lambda_{r,t}$ in
	$(\ref{eq::lambda})$ can be replaced by the $t$-th order {\itshape
	preordering} in Schm{\"u}dgen's representations of polynomials positive
	on $S$ ({\upshape\cite{Schmugen1991}}).
	Moreover, if we have exact representations of polynomials {\itshape nonnegative}
	on $S$ in some cases, we may fix the order $t$ in $\Lambda_{r,t}$
	and only let $r$ increase. Then, a sequence of nested outer approximate
semidefinite representations of $\K$ can be obtained. For instance,
consider the case
\begin{equation}\label{eq::univariate}
	S=[-1,1]=\{y_1\in\RR\mid g_1(y_1):=1-y_1^2\ge 0\}. 
\end{equation}
By the representations of univariate polynomials nonnegative on an
interval (c.f. \cite{PR2000,Laurent_sumsof}), 
we can fix $t=d_{\K}$ and then the sequence $\Lambda_{r,d_{\K}}$ converges to
$\K$ as $r$ tends to $\infty$. We leave the details here to keep the
paper clean.

(iv). If the Lagrangian $L_f(x)$ is s.o.s for every linear
$f\in\RR[x]$, by the proof of Theorem \ref{th::main} (ii), the
condition $\mathscr{L}(\Theta_k)\le 1,\ k=\lceil d_x/2\rceil,\ldots,r$
is redundant and can be removed. 
In general, it may be difficult to check whether or not the Lagrangian
$L_f(x)$ is s.o.s for every linear $f\in\RR[x]$. However, when
$-p(x,y)$ is s.o.s-convex in $x$ for any $y\in S$, by Corollary
\ref{cor::red} and Lemma \ref{lem::sosconvex}, $L_f(x)$ is indeed
s.o.s for any s.o.s-convex $f\in\RR[x]$ (in particular, for every
linear $f\in\RR[x]$). 
 In particular, if $S$ is in the case \eqref{eq::univariate} and
 $-p(x,y)$ is s.o.s-convex in $x$ for any $y\in S$, then we have the
 {\itshape exact} semidefinite representation $\K=\Lambda_{r,t}$ for any $r\ge
 \lceil d_x/2\rceil$ and $t\ge d_\K$. 
}
	$\hfill\square$
\end{remark}

Note that the standard semidefinite representation \eqref{eq::sdr} of
$\Lambda_{r,t}$
can be easily generated using {\sf Yalmip} (\cite{YALMIP}). 
Moreover, for $m=2$ and $3$, we can first generate the form
\eqref{eq::sdr} of $\Lambda_{r,t}$ 
and then draw it using the software package {\sf Bermeja}
\cite{Bermeja}.

\begin{example}\label{ex::sdr}
Now we present some illustrating
examples. As we shall see, the approximate semidefinite
representations defined in this section are very tight for some given
sets $\K$.
\begin{enumerate}[(1).]
	\item	\label{ex::K1}
		Consider the polynomial 
		\[
			\begin{aligned}
				\begin{aligned}
					f(x_1,x_2,x_3)=&32x_1^8+118x_1^6x_2^2+40x_1^6x_3^2+25x_1^4x_2^4
					-43x_1^4x_2^2x_3^2-35x_1^4x_3^4+3x_1^2x_2^4x_3^2\\
					&-16x_1^2x_2^2x_3^4+24x_1^2x_3^6+16x_2^8+44x_2^6x_3^2+70x_2^4x_3^4+60x_2^2x_3^6+30x_3^8. 
				\end{aligned}
			\end{aligned}
		\]
		It is proved in {\upshape\cite{Ahmadi2012}} that
		$f(x_1,x_2,1)\in\RR[x_1,x_2]$ is convex but 
		not s.o.s-convex. Rotate the shape in the $(x_1,x_2)$-plane defined by 
		$f(x_1,x_2,1)\le 100$ continuously around the origin by $90^\circ$ clockwise. 
		Denote by $\K$ the common area of these shapes in this process.
		We illustrate $\K$ in the left of Figure \ref{fig::nsos}.
		In other words, the set $\K$ is defined by
		\[
			\K=\{(x_1,x_2)\in\RR^2\mid p(x_1,x_2,y_1,y_2)\ge 0,\quad
			\forall y\in S\},
		\]
		where $p(x_1,x_2,y_1,y_2)=100-f(y_1x_1-y_2x_2,y_2x_1+y_1x_2,1)$
		and 
		\[
			S=\{(y_1,y_2)\in\RR^2\mid y_1\ge 0,\ y_2\ge 0,\ y_1^2+y_2^2=1\}.
		\]
		It is clear that the assumptions in Theorem \ref{th::main} holds
		for $\K$ and $d_x=d_y=8$, $d_{\K}=4$. 
		By the software {\sf Bermeja}, the semidefinite representation set
		$\Lambda_{4,4}$ as defined in $(\ref{eq::lambda})$ is
		drawn in gray bounded by the red curve in the right of Figure 
		\ref{fig::nsos}. 
		\begin{figure}
			\centering
			\caption{\label{fig::nsos} The set $\K$ (left) and the
			semidefinite representation set 
			$\Lambda_{4,4}$ (right) in Example \ref{ex::sdr}
			(\ref{ex::K1}).}
			\scalebox{0.5}{
				\includegraphics[trim=150 220 150 220,clip]{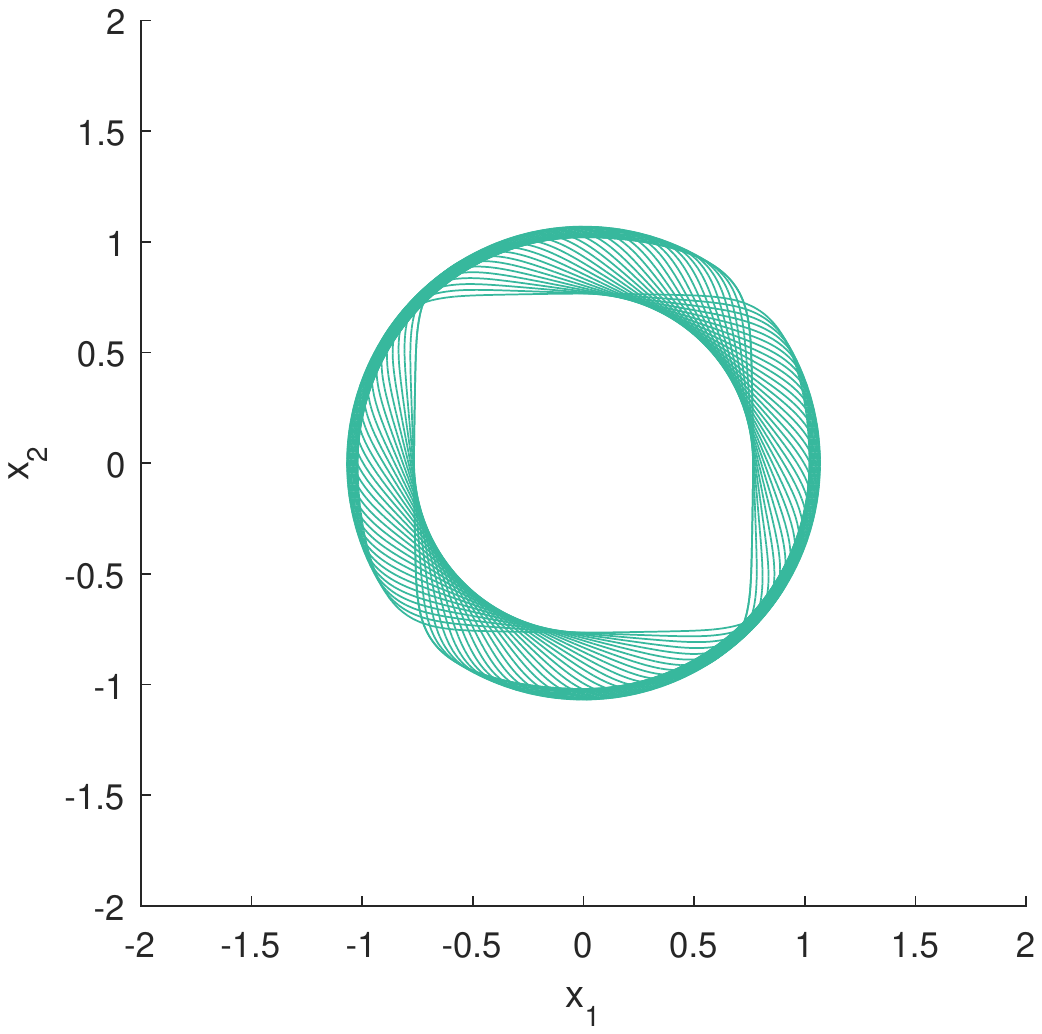}
				\includegraphics[trim=150 220 150 220,clip]{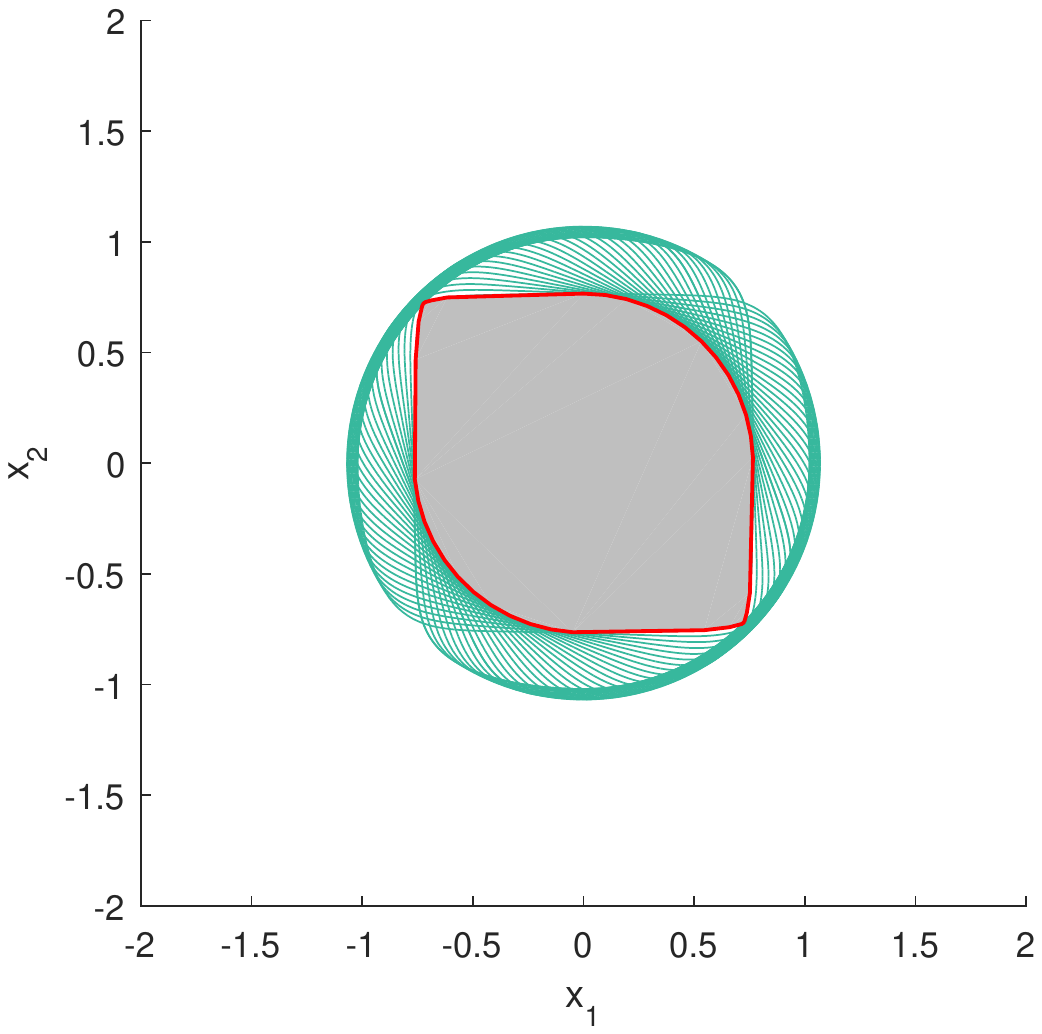}
			}
		\end{figure}
	\item \label{ex::K2}
	Consider the set 
	\[
		\K=\{(x_1,x_2)\in\RR^2\mid p(x_1,x_2,y_1,y_2)\ge
		0,\quad\forall y\in S\}
	\]
	where $p(x_1,x_2,y_1,y_2)=-x_1^2-2y_2x_1x_2-y_1x_2^2-x_1-x_2$ and 
	\[
		S=\{(y_1,y_2)\in\RR^2\mid 1-y_1\ge 0,\ 1/2\ge y_2\ge -1/2,\
		y_1-y_2^2\ge 0\}. 
	\]
	We illustrate $\K$ in the left of Figure \ref{fig::sos} by using
	some grid of $S$. The Hessian matrix of $p$ with respect to $x_1$
	and $x_2$ is 
	\[
		H=-\left[\begin{array}{cc}
		2& 2y_2\\
2y_2& 2y_1
\end{array}\right]\qquad\text{with}\quad \det(H)=4y_2^2-4y_1.
	\]
	Clearly, $-p(x_1,x_2,y_1,y_2)$ is s.o.s-convex in $(x_1,x_2)$
	for every $y\in S$. We have $d_x=2, d_y=1$ and $d_{\K}=1$. 
The 
semidefinite representation set $\Lambda_{1,1}$ is
	drawn in gray bounded by the red curve in the right of Figure 
	\ref{fig::sos}.
\begin{figure}
	\centering
	\caption{\label{fig::sos} The set $\K$ (left) and the semidefinite
	representation set $\Lambda_{1,1}$ (right) in Example \ref{ex::sdr}
	(\ref{ex::K2}).}
\scalebox{0.5}{
	\includegraphics[trim=150 220 150 220,clip]{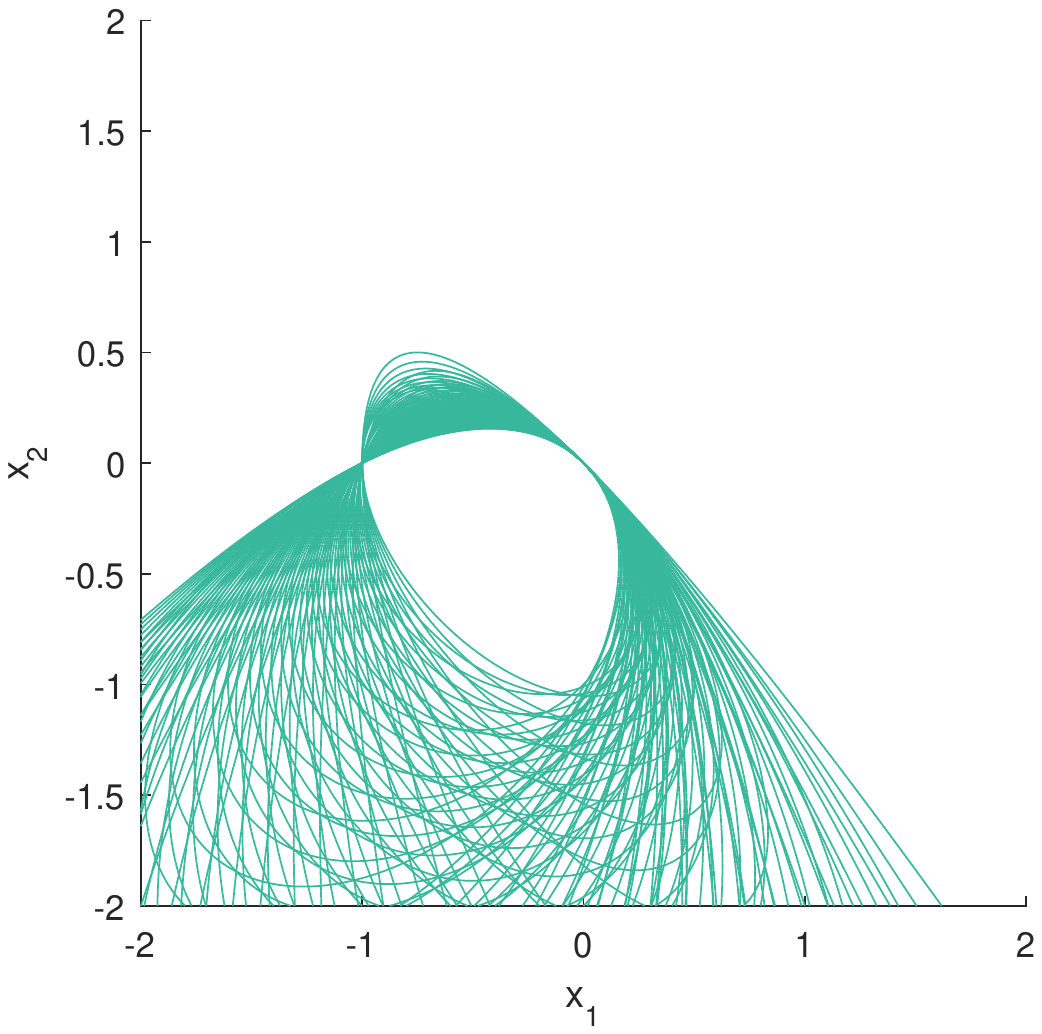}
\includegraphics[trim=150 220 150 220,clip]{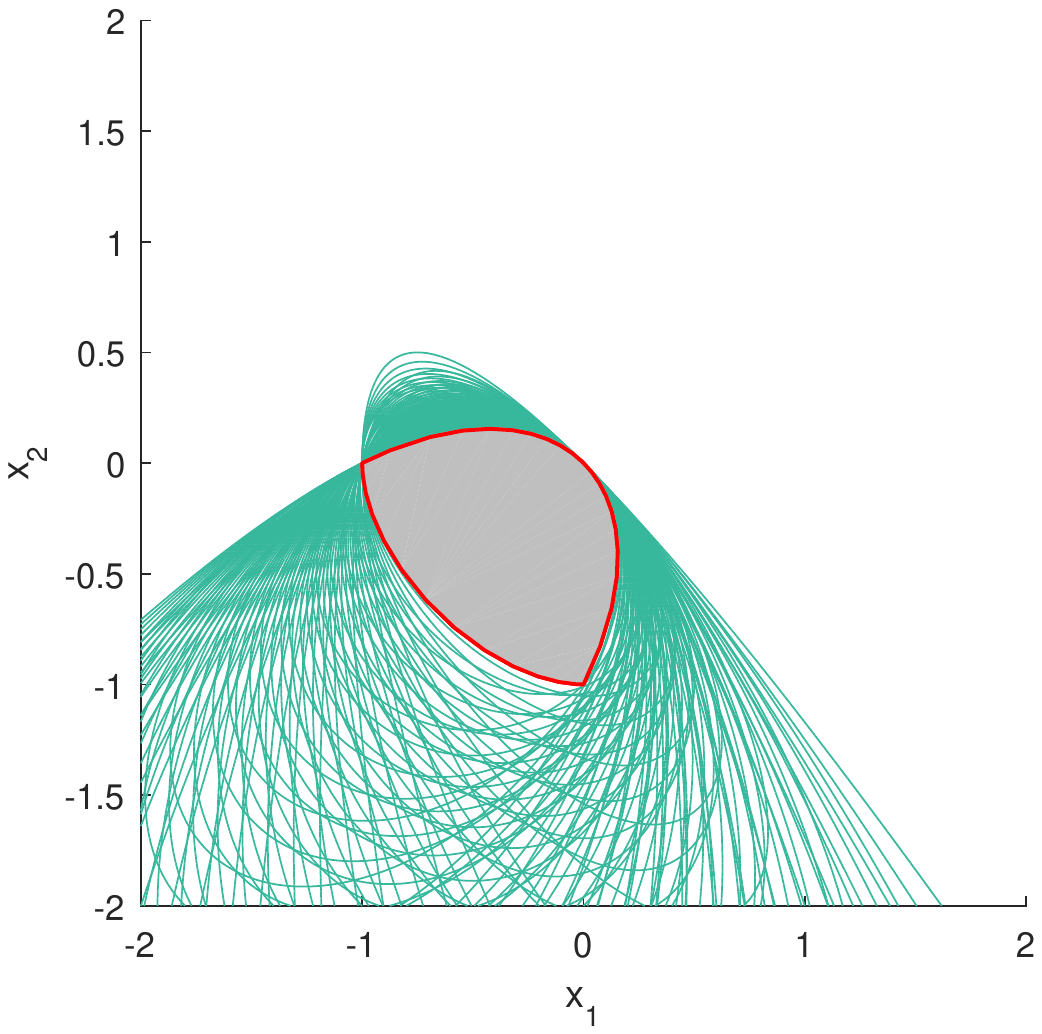}
}
\end{figure}

\end{enumerate}
\end{example}

%
%

\subsection{More discussions}

Now we would like to interpret the semidefinite approximations
$\Lambda_{r,t}$	for $\K$ in a dual view. We shall explain why these
semidefinite approximations need two indices and whether or not we can
approximate the convex hull of $\K$ in a similar way if the convexity
in $x$ is removed
from the constraints functions $-p(x,y)$ for $y\in S$. 

It is clear that the convex hull of a subset in $\RR^m$ is the
intersection of half spaces defined by hyperplanes tangent to this
subset. Hence, to obtain semidefinite approximations of the convex
hull of a subset in $\RR^m$, it is key to characterize linear functions
nonnegative on the subset via s.o.s of polynomials. If
$\K$ is defined by finitely many polynomial inequalities, a linear
function $a^Tx+b$ nonnegative on $\K$ can be represented by Putinar's
(or Schm{\"u}dgen's) Positivstellensatz and convergent semidefinite
approximations of $\K$ can be derived by increasing the degrees of
s.o.s of polynomials invloved in the representation, see
\cite{convexsetLasserre,thetabody}. However, as the set $\K$ in our
case is defined by infinitely many polynomial inqualities, the
Positivstellensatz can not be directly used here. Nevertheless, when $-p(x,y)$
is convex in $x$ for all $y\in S$ and the assumptions in Corollary
\ref{cor::red} hold, there exists a (atomic) measure $\mu$
supported on $S$ for each $a^Tx+b$ such that the associated Lagrangian
$L_{a,b}(x)=a^Tx+b-\int
p(x,y)\ud \mu(y)\ge 0$ on $\RR^m$. Then, to obtain semidefinite
approximations of $\K$, we can use Lasserre's s.o.s representation via
high degree perturbations (Theorem
\ref{th::perturbation}) to characterize
this inequality and the dual of Putinar's Positivstellensatz (Theorem
\ref{th::PP2}) to
replace the unknown measure $\mu$ by a linear functional in
$(\qm_t(G))^*$. Consequently, in the dual, the resulting semdefinite approximations
$\Lambda_{r,t}$ are defined in the way \eqref{eq::lambda} and need two
indices, i.e., one to bound the degree of the perturbation and the
other to bound the order of the quadratic module.

From the above arguments, we can
also see that if the convexity in $x$ is removed from $-p(x,y)$ for
$y\in S$, the convex hull of $\K$ can not be approximated as closely
as possible in a way similarly as $\Lambda_{r,t}$ is defined.
To see this, recall that even in the finitely many
constraints case mentioned above, one need to increase the degree of
s.o.s of polynomials involved in the
Putinar's (or Schm{\"u}dgen's) representation of $a^Tx+b$ to obtain
convergent semidefinite approximations. In the infinitely many
constraints case, to formulate the nonnegative Lagrangian $L_{a,b}$,
we need a measure $\mu$ for $a^Tx+b$ to encode those 
active $y\in S$ (Corollary \ref{cor::red}). As $\mu$ is
unknown, we can
not further parameterize unknown s.o.s of polynomials in the integral $\int
p(x,y)\ud\mu(y)$ and increase the degree; otherwise, bi-linearity
occurs and thus semidefinite
approximations can not be derived. Moreover, such a measure $\mu$ for
$a^Tx+b$ may not even exist if the convexity in $x$ is removed from
$-p(x,y)$. Therefore, it is still a challenge to construct semidefinite
approximations of convex hull of semi-algebraic sets defined by infinitely many
arbitrary polynomial inequalities. 
\vskip 5pt
	In \cite{Lasserre15,MHL15}, some tractable methods using
	semidefinite programs are proposed to approximate
	semi-algebraic sets defined with quantifiers. Clearly, the set
	$\K$ studied in this paper is in such case with a universal
	quantifiers. To end this section, we would like to point out the
	differences in methodology and contributions between the present
	paper and the above two references. The following is the basic idea of
	\cite{Lasserre15,MHL15} to get approximations of $\K$ in
	\eqref{eq::K}. For $x\in\RR^m$, define the map
	$J_p(x):=\min_{y\in S} p(x,y)$. Then, we have $\K=\{x\in\RR^m\mid
	J_p(x)\ge 0\}$. Suppose that $\K$ is contained in a compact set
	$\mathscr{B}$ in
	$\RR^m$, then it can be proved that there exsits a sequence of
	polynomials $\{q_k\}_k\subseteq\RR[x]$ such that $q_k(x)\le
	J_p(x)$ for all $x\in\mathscr{B}$ and $q_k$ converges to $J_p$ 
	for the $L_1(\mathscr{B})$-norm. Hence, $\K$ can be approximated
	by $\{x\in\RR^m\mid
	q_k(x)\ge 0\}$. As $p(x,y)-q_k(x)\ge 0$ for every
	$x\in\mathscr{B}$ and $y\in S$, we can use Positivstellensatz in
	$\RR[x,y]$ to reduce the problem of computing such a sequence
	$\{q_k\}_k$ to SDP problems. Therefore, the method in
	\cite{Lasserre15,MHL15} works for $\K$ in a general form without
	requiring $-p(x,y)$ to be
	convex in $x$ and approximates $\K$ by a sequence of sublevel
	set of a single polynomial. Instead, we exploit the convexity of
	the defining polynomials of 
	$\K$ and construct semidefinite approximations for it.
	Note that the polynomials $q_k$'s in method of
	\cite{Lasserre15,MHL15} can be enforced to be convex for $\K$ in
	\eqref{eq::K}
	(see \cite[Section 4.2]{Lasserre15}),
	but the convergence and the semidefinitely representability of the sublevel
	sets are not clear to the best of our knowledge. 

\section{SDP relaxations of convex semi-infinite polynomial
programming}\label{sec::SDPCSIPP}
For a convex polynomial $f(x)\in\RR[x]$, consider the following convex
semi-infinite polynomial programming problem
\[
(\P)\qquad	f^*:=\inf_{x\in\K}\ f(x)\quad\text{where $\K$ is defined in
		(\ref{eq::K})}.
	\]
Let $d_P:=\max\{\deg(f),d_x\}$ and $\mathcal{M}(S)$ be the set of all
(nonnegative) Borel measures supported on $S$.

\subsection{General case}
Consider the case when $\K$ is compact and Assumption
\ref{ass} holds. 
In the following, we will obtain SDP relaxations of $(\P)$ in two
steps.

In the first step, 
for any integer $r\ge\lceil d_P/2\rceil$, we convert
$(\P)$ to the problem
\begin{equation}\label{eq::f*r}
(\P_r)\qquad	\left\{
	\begin{aligned}
		f_r^*:=\sup_{\rho,\eta,\mu,q}\
		&\rho-2\eta&\\
		\text{s.t.}\ &f(x)-\rho+\eta(1+\Theta_r)=\int_{S}
		p(x,y)d\mu(y)+q^2,&\\
		&\rho\in\RR,\ \eta\ge 0,\ \mu\in\mathcal{M}(S),\
		q\in\RR[x]_{r}.&
	\end{aligned}\right.
\end{equation}
For $\mathscr{L}\in(\RR[x]_{2r})^*$, $\xi\ge 0$, $\rho\in\RR$,
$\eta\ge 0$, $\mu\in\mathcal{M}(S)$ and
$q\in\RR[x]_{r}$,
consider the Lagrange dual function of \eqref{eq::f*r}:
\[
	\begin{aligned}
		L(\rho,\eta,\mu,q,\mathscr{L},\xi)
		:=&\rho-2\eta+\mathscr{L}\left(f(x)-\rho+\eta(1+\Theta_r)-\int_{S}
		p(x,y)d\mu(y)-q^2\right)+\xi\eta\\
		=&\mathscr{L}(f)+(1-\mathscr{L}(1))\rho+(\mathscr{L}(\Theta_r)+\mL(1)-2+\xi)\eta-\int_S
		\mathscr{L}(p(x,y))\ud\mu(y)-\mathscr{L}(q^2)\\
	\end{aligned}
\]
Then, 
\[
	\begin{aligned}
&\sup_{\rho,\eta,\mu,q}\
L(\rho,\eta,\mu,q,\mathscr{L},\xi)\quad\text{s.t.}\ \rho\in\RR,\ \eta\ge 0,\ \mu\in\mathcal{M}(S),\
		q\in\RR[x]_{r}\\
		&=\left\{\begin{array}{rll}
			\mathscr{L}(f)&\mbox{if} &\mathscr{L}(1)=1,\
			\mathscr{L}(\Theta_r)+\xi\le 1,\\
			&		&\mathscr{L}(q^2)\ge 0,\ \forall\ q\in\RR[x]_{r},\\
			&				&\mathscr{L}(p(x,y))\ge 0,\ \forall\ y\in S,\\
			+\infty&\mbox{otherwise}.& 
		\end{array}
		\right.
	\end{aligned}
\]
Hence, the Lagrange dual problem of \eqref{eq::f*r} reads
\begin{equation}\label{eq::f*rdual}
(\P^*_r)\qquad	\left\{
	\begin{aligned}
		\inf_{\mL\in(\RR[x]_{2r})^*}\
		&\mathscr{L}(f)&\\
		\text{s.t.}\
		& \mL(1)=1,\ \mathscr{L}(\Theta_r)\le 1,&\\ 
		&\mL(q^2)\ge 0,\
		\forall q\in\RR[x]_r,&\\
		&\mL(p(x,y))\ge 0,\ \forall y\in S.&\\
	\end{aligned}\right.
\end{equation}

\begin{definition}
	We call $\mL^{(r)}\in(\RR[x]_{2r})^* $ with $r\ge\lceil
	d_P/2\rceil$ a nearly optimal solution of
	$(\ref{eq::f*rdual})$ if $\mL^{(r)}$ is feasible for
	$(\ref{eq::f*rdual})$ and 
	the limit of $\mathscr{L}^{(r)}(f)$ is equal to the limit of
	the optimal values of $(\P^*_r)$ as $r\rightarrow\infty$. 
\end{definition}

\begin{theorem}\label{th::mainOP}
	Suppose that $f(x)$ is convex, $\K$ is compact and Assumption \ref{ass} holds.
	Let $\mL^{(r)}$ be a nearly optimal solution of
	$(\ref{eq::f*rdual})$ and
	$\mL^{(r)}(x)=(\mL^{(r)}(x_1),\ldots,\mL^{(r)}(x_m))$.
	\begin{enumerate}[\upshape (i)]
	\item $f^*_r\le f^*$ and $f^*_r$ 
		converges to $f^*$ as $r$ tends to $\infty$;
	\item $f^*_r$ is attainable in $(\ref{eq::f*r})$ and there is no dual gap between
		$(\ref{eq::f*r})$ and $(\ref{eq::f*rdual})$;
	\item Assume that $\tau_\K=1$ $($possibly after scaling$)$. 
		Then, for any convergent subsequence
		$\{\mL^{(r_i)}(x)\}_i$ of $\{\mL^{(r)}(x)\}_r$, 
	$\lim_{i\rightarrow\infty}\mL^{(r_i)}(x)$ is a minimizer of
		$(\P)$. 
		Consequently, if $u^*$ is the unique minimizer of $(\P)$, then 
		$\lim_{r\rightarrow\infty}{\mL^{(r)}(x)}=u^*$;
	\item If moreover, the Lagrangian $L_f(x)$ as defined in
		$(\ref{eq::lag})$ is s.o.s,
		then $f_r^*=f^*$ for any $r\ge\lceil d_P/2\rceil$ and it is
		also attainable in $(\ref{eq::f*rdual})$. 
	\end{enumerate}
\end{theorem}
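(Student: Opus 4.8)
The plan is to treat $(\P_r)$ and $(\P_r^*)$ as a dual pair of conic linear programs and to combine conic duality with the Lagrangian description of Corollary~\ref{cor::red}, Lasserre's perturbation theorem (Theorem~\ref{th::perturbation}), and a limiting argument resting on Berg's determinacy criterion (Theorem~\ref{th::Berg}); convexity of $f$ and of $-p(\cdot,y)$ is exploited only at the end, through Jensen's inequality. The bound $f_r^*\le f^*$ in (i) is weak duality: for any feasible $(\rho,\eta,\mu,q)$ of $(\ref{eq::f*r})$ and any $x\in\K$ the right-hand side of the equality constraint is $\ge 0$ (since $\mu\ge 0$ and $p(x,\cdot)\ge 0$ on $S$) while $0\le\Theta_r(x)\le 1$ on $\K$, so $f(x)\ge\rho-\eta(1+\Theta_r(x))\ge\rho-2\eta$; taking $\inf_{x\in\K}$ and then the supremum over feasible tuples gives the claim. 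For the remaining assertions of (ii) I would invoke conic LP duality: as $(\P_r^*)$ is the Lagrange dual of $(\P_r)$, it suffices to produce a strictly feasible point of $(\P_r^*)$. Fixing a Slater point $u_0\in\K$ (so $p(u_0,y)>0$ on the compact $S$) and letting $\mL(\cdot):=\int\cdot\,d\nu$ for a probability measure $\nu$ with smooth density supported in a small ball around $u_0$ inside the interior of $[-\tau_\K,\tau_\K]^m$, chosen small enough that $p(x,y)>0$ for all $x$ in that ball and $y\in S$, gives $\mL(1)=1$, $M_r(\mL)\succ 0$, $\mL(\Theta_r)<1$ and $\mL(p(x,y))>0$ on $S$; dual strict feasibility then yields zero duality gap and attainment of $f_r^*$ in $(\ref{eq::f*r})$.

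For the convergence $f_r^*\to f^*$ in (i), Corollary~\ref{cor::red} furnishes $y_1,\dots,y_l\in S$ and $\lambda_i\ge 0$ with $L_f(x)=f(x)-f^*-\sum_i\lambda_i p(x,y_i)\ge 0$ on $\RR^m$, in particular on $[-\tau_\K,\tau_\K]^m$. By Theorem~\ref{th::perturbation}(iii), applied after rescaling the variables by $\tau_\K$, for every $\varepsilon>0$ there is $r(\varepsilon)\ge\lceil d_P/2\rceil$ such that $L_f+\varepsilon(1+\Theta_r)$ is a sum of squares of degree $\le 2r$ whenever $r\ge r(\varepsilon)$; then $(\rho,\eta,\mu,q^2)=\bigl(f^*-\varepsilon,\varepsilon,\sum_i\lambda_i\delta_{y_i},L_f+\varepsilon(1+\Theta_r)\bigr)$ is feasible for $(\P_r)$ with objective value $f^*-3\varepsilon$, so $f_r^*\ge f^*-3\varepsilon$ for all $r\ge r(\varepsilon)$, and together with $f_r^*\le f^*$ this gives the limit.

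For (iii), assume $\tau_\K=1$ and let $\mL^{(r_i)}(x)\to u^*$. From $\mL^{(r)}(\Theta_r)\le 1$ and positivity of the associated moment form one gets $\mL^{(r)}(x_j^{2r})\le 1$ for each $j$, and standard Cauchy--Schwarz estimates for positive functionals then bound $|\mL^{(r)}(x^\alpha)|\le 1$ for all $|\alpha|\le r$. Hence, after passing to a further subsequence (still denoted $r_i$), every moment converges, defining $L_\infty\in(\RR[x])^*$ with $L_\infty(1)=1$, $L_\infty(q^2)\ge 0$ for all $q$, and $|L_\infty(x^\alpha)|\le 1$; by Theorem~\ref{th::Berg}, $L_\infty$ has a representing probability measure $\nu$ supported on $[-1,1]^m$ with barycenter $\int x\,d\nu=L_\infty(x)=u^*$. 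Passing to the limit in the fixed-degree relations $\mL^{(r_i)}(p(x,y))\ge 0$ gives $\int p(x,y)\,d\nu(x)\ge 0$ for every $y\in S$, and since $-p(\cdot,y)$ is convex, Jensen's inequality yields $p(u^*,y)\ge\int p(x,y)\,d\nu(x)\ge 0$, so $u^*\in\K$. By near-optimality of $\mL^{(r)}$ and parts (i)--(ii), $L_\infty(f)=\lim_i\mL^{(r_i)}(f)=f^*$, whence Jensen once more gives $f(u^*)\le\int f\,d\nu=f^*\le f(u^*)$, i.e.\ $u^*$ minimizes $(\P)$; the final claim follows because $\{\mL^{(r)}(x)\}_r$ is bounded and every cluster point is a minimizer.

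For (iv), if $L_f$ is a sum of squares then Corollary~\ref{cor::red} gives $f-f^*-\sum_i\lambda_i p(x,y_i)=\sigma$ with $\sigma$ s.o.s.\ and $\deg\sigma\le d_P\le 2r$; taking $(\rho,\eta,\mu,q^2)=(f^*,0,\sum_i\lambda_i\delta_{y_i},\sigma)$ is feasible for $(\P_r)$ with value $f^*$, so $f_r^*=f^*$ by (i), and evaluation at a minimizer $u^*$ of $(\P)$ (which exists since $\K$ is compact) is feasible for $(\P_r^*)$ with value $f(u^*)=f^*$, giving dual attainment. I expect the main obstacle to be step (iii): a bare positive-semidefinite truncated moment functional need not respect the nonnegativity of $-p(\cdot,y)$ or of separating affine functions, so one must first recover a genuine representing measure for the limit functional — via the uniform moment bounds and Berg's theorem — and only then bring in convexity through Jensen's inequality.
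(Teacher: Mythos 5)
Your proposal is correct and follows essentially the same strategy as the paper: weak duality for $f_r^*\le f^*$, Corollary \ref{cor::red} plus Lasserre's perturbation theorem for the convergence $f_r^*\to f^*$, a strictly feasible dual point built from a measure concentrated near a Slater point for (ii), uniform moment bounds plus Tychonoff and Theorem \ref{th::Berg} plus Jensen for (iii), and the s.o.s.\ Lagrangian with an evaluation functional at a minimizer for (iv). The one genuine local difference is in (iii): to show the limit point lies in $\K$, you pass to the limit in the constraints $\mL^{(r_i)}(p(x,y))\ge 0$ and apply Jensen's inequality to the concave function $p(\cdot,y)$, whereas the paper re-invokes the separation/perturbation argument from the proof of Theorem \ref{th::main}(i) to place the limit in $\K+\varepsilon\mathbf{B}$ for every $\varepsilon>0$; your route is more direct and equally valid, since the barycenter of the representing measure $\nu$ satisfies $p(u^*,y)\ge\int p(x,y)\,d\nu(x)\ge 0$ for each $y\in S$.
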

\begin{proof}
	(i) For any $x\in\K$ and $y\in S$, we have $\Theta_r(x)\le 1$ and $p(x,y)\ge 0$.
	Consequently, for any feasible point $(\rho,\eta,\mu,q)$ of
	(\ref{eq::f*r}) and any $x\in\K$, it holds that
	\[
		\begin{aligned}
			f(x)&=\rho-\eta(1+\Theta_r(x))+\int_S p(x,y)d\mu(y)+q^2\\
			&\ge \rho-\eta(1+\Theta_r(x))\\
			&\ge \rho-2\eta, 
		\end{aligned}
	\]
which implies that $f^*_r\le f^*$. 

Conversely, by Corollary \ref{cor::red},
there exist some $y_1,\ldots,y_l\in S$ 
and nonnegative Lagrange multipliers
$\lambda_1,\ldots,\lambda_l\in\RR$ such that 
\begin{equation}\label{eq::Lag}
	f(x)-f^*-\sum_{j=1}^l\lambda_j p(x,y_l)=
	f(x)-f^*-\int_{S} p(x,y)d\mu(y)
	\ge 0,\quad\forall x\in\RR^m, 
\end{equation}
where $\mu=\sum_{j=1}^l\lambda_j\delta_{y_l}\in\mathcal{M}(S)$ and $\delta_{y_l}$ is
the Dirac measure at $y_l$.
For any fixed $r\in\N$ with $r\ge\lceil d_P/2\rceil$, by Theorem
\ref{th::perturbation} (i),
there exists a $\varepsilon_r^*\ge 0$ such that 
\begin{equation}\label{eq::L}
	f(x)-f^*-\int_S p(x,y)d\mu(y)+\eta(1+\Theta_r) \quad\text{is
	s.o.s in}\ \RR[x]_{2r}
\end{equation}
if and only if $\eta\ge\varepsilon_r^*$. It means that
$(\ref{eq::f*r})$ is feasible and $f^*_r\ge f^*-2\varepsilon^*_r$.  
Moreover,  by Theorem \ref{th::perturbation} (ii), 
$\varepsilon_r^*$ decreasingly converges to $0$ as $r$ tends to
$\infty$. It then follows
that $f_r^*$ converges to $f^*$ as $r$ tends to $\infty$. 

(ii) Fix a Slater point $u$ of $\K$. Since $S$ is compact, there
exists a neighborhood $\mathcal{O}_u$ of $u$ such that every point in
$\mathcal{O}_u$ is a Slater point of $\K$. Let $\nu$ be the probability
measure with uniform distribution in $\mathcal{O}_u$ and set
$\mL\in(\RR[x]_{2r})^*$ where $\mL(x^\alpha)=\int x^\alpha
d\nu$. It is easy to see
that $\mL$ is strictly admissible for (\ref{eq::f*rdual}).
The conclusion follows due to the duality theory in convex optimization. 

(iii) For any $r\ge \lceil d_P/2\rceil$, as $\tau_{\K}=1$ and
$\mathscr{L}^{(r)}(\Theta_r)\le
1$, it is clear that $\mathscr{L}^{(r)}(x_i^{2r})\le 1$ for all $i=1,\ldots,n$. 
Since $\mL(1)=1$ and $\mL(q^2)\ge 0$ for all $q\in\RR[x]_r$, we then
deduce that $\vert \mL^{(r)}(x^\alpha)\vert\le 1$ for any $|\alpha|\le 2r$ by
\cite[Lemma 4.1 and 4.3]{Lasserre05}. 
Extend $\mL^{(r)}\in(\RR[x]_{2r})^*$ to $(\RR[x])^*$ by letting
$\mL^{(r)}(x^\alpha)=0$ for all $\vert\alpha\vert>2r$ and denote it by
$\wt{\mL}^{(r)}$.  
Then, it holds that $\wt{\mL}^{(r)}(x^\alpha)\in[-1,1]$
for all $\alpha\in\N^m$. 

Let $\{\mL^{(r_i)}(x)\}_i$ be a
convergent subsequence of $\{\mL^{(r)}(x)\}_r$. By Tychonoff's theorem,
there exists a convergent subsequence of the corresponding
$\{\wt{\mL}^{(r_i)}(x^\alpha)\mid \alpha\in\N^m\}_i$ in the product topology. 
Without loss of generality, we assume that the whole
sequence $\{\wt{\mL}^{(r_i)}(x^\alpha)\mid \alpha\in\N^m\}_i$
converges as $i\rightarrow\infty$ and
denote by $\wt{\mL}^*\in(\RR[x])^*$ the limit. 
From the pointwise convergence, we have
$\wt{\mathscr{L}}^*(q^2)\ge 0$ for all $q\in\RR[x]$ and
$\wt{\mL}^*(x^\alpha)\in[-1,1]$ for all $\alpha\in\N^m$. 
By Theorem \ref{th::Berg}, $\wt{\mL}^*$ has exactly
one representing measure $\nu$ with support contained in
$[-1,1]^{m}$. Since $\mL^{(r)}$ is nearly optimal solution of
(\ref{eq::f*rdual}), we obtain $\wt{\mL}^*(f)=\int f d\nu(x)=f^*$ by (i) and (ii).
We have  
\[
	\lim_{i\rightarrow\infty}\mL^{(r_i)}(x)=\wt{\mL}^*(x)=\left(\int
	x_1 d\nu(x),\ldots,\int x_m d\nu(x)\right). 
\]
For any $\varepsilon>0$, from the proof of Theorem \ref{th::main} (i) and
Remark \ref{rk::simplification} (i), it is easy to see that there exists
an integer $r(\varepsilon)$ such that
$\mL^{(r_i)}(x)\in\K+\varepsilon\mathbf{B}$ whenever $r_i\ge
r(\varepsilon)$. 
By the pointwise convergence, 
we deduce that $\wt{\mL}^*(x)\in\K$. 
%
Then, since $f$ is convex and $\wt{\mL}^*$ has a representing measure, by
Jensen's inequality, $f^*\le f(\wt{\mL}^*(x))\le\wt{\mL}^*(f)=f^*$.
Hence, $\wt{\mL}^*(x)$ is indeed a minimizer of (\ref{eq::f*rdual}). 

Assume that $u^*$ is the unique minimizer of (\ref{eq::f*rdual}).
We have shown that $\{\mL^{(r)}(x)\}_r$ is contained in
$[-1,1]^{m}$ and
$\lim_{i\rightarrow\infty}{\mL^{(r_i)}(x)}=u^*$ for any convergent
subsequence $\{\mL^{(r_i)}(x)\}_i$, therefore the whole sequence
$\{\mL^{(r)}(x)\}_r$ converges to $u^*$.

(iv) Under the assumption, (\ref{eq::L}) holds for $\eta=0$ and any 
$r\ge\lceil d_P/2\rceil$. Hence, $f_r^*=f^*$ for any $r\ge\lceil
d_P/2\rceil$ by the proof of (i). 
As $\K$ is compact, suppose that $f^*$ is attainable in $(\P)$ at a
minimizer $x^*\in\K$. 
Define $\mL^*\in(\RR[x]_{2r})^*$ by letting $\mL^*(x^\alpha)=(x^*)^\alpha$
for all $\alpha\in\N^m_{2r}$, then $f^*_r=f^*$
is attainable in (\ref{eq::f*rdual}) at $\mL^*$. 
\end{proof}

\vskip 10pt 
Consider the problem $(\P_r)$. The integration  $\int_{S}\cdot
d\mu(y)$ can be seen as a linear functional in $(\RR[y])^*$. In the
second step, to
obtain SDP relaxations of $(\P_r)$, we need to characterize those linear
functionals $\mH\in(\RR[y])^*$ which have representing measures in
$\mathcal{M}(S)$. In a dual view, we need a representation of 
$\mL(p(x,y))\in\RR[y]$ in \eqref{eq::f*rdual} which is nonnegative on
$S$. Here, Putinar Positivstellensatz (Theorem
\ref{th::PP} and \ref{th::PP2}) comes into play.

For any $t\ge d_{\K}$, consider the SDP relaxation of (\ref{eq::f*r})

\begin{equation}\label{eq::f*rt}
	\left\{
	\begin{aligned}
		f_{r,t}^{\psdp}:=\sup_{\rho,\eta,\mH,q}\
		&\rho-2\eta&\\
		\text{s.t.}\
		&f(x)-\rho+\eta(1+\Theta_r)=\mH(p(x,y))+q^2,&\\
		&\rho\in\RR,\ \eta\ge 0,\ \mH\in(\qm_t(G))^*,\ 
		q\in\RR[x]_{r}.&
	\end{aligned}\right.
\end{equation}
Similar to \eqref{eq::f*r}, the Lagrange dual function of \eqref{eq::f*rt}
is 
\[
		L(\rho,\eta,\mH,q,\mathscr{L},\xi)
		=\mathscr{L}(f)+(1-\mathscr{L}(1))\rho+(\mathscr{L}(\Theta_r)+\mL(1)-2+\xi)\eta-
		\mathscr{H}(\mathscr{L}(p(x,y)))-\mathscr{L}(q^2),
\]
where $\mathscr{L}\in(\RR[x]_{2r})^*$, $\xi\ge 0$, $\rho\in\RR$,
$\eta\ge 0$, $\mH\in(\qm_t(G))^*$ and $q\in\RR[x]_{r}$.
Similar to the duality between \eqref{eq::f*r} and \eqref{eq::f*rdual}, 
the Lagrange dual problem of \eqref{eq::f*rt} can be derived as
\begin{equation}\label{eq::f*rtdual}
	\left\{
	\begin{aligned}
		f_{r,t}^{\dsdp}:=\inf_{\mL}\
		&\mathscr{L}(f)&\\
		\text{s.t.}\
		&\mL\in(\RR[x]_{2r})^*,\ \mL(1)=1,\ \mathscr{L}(\Theta_r)\le
		1,&\\
		&\mL(q^2)\ge 0,\ \forall q\in\RR[x]_r,\
		\mL(p(x,y))\in\qm_t(G).&\\
	\end{aligned}\right.
\end{equation}
\begin{theorem}\label{th::mainsdp}
	For any integer $r\ge\lceil d_P/2\rceil$, the following are true. 
	\begin{enumerate}[\upshape (i)]
		\item If $\qm(G)$ is Archimedean and the Slater
			condition holds for $\K$, then 
			$f_{r,t}^{\psdp}$ and $f_{r,t}^{\dsdp}$ decreasingly
			converge to $f_r^*$ as $t$ tends to $\infty$;
		\item For some order $t\ge d_{\K}$, if the flat extension
			condition
			holds for $\mH^*$ in the solution $(\rho^*,\eta^*,\mH^*,q^*)$
			of $(\ref{eq::f*rt})$, then $f_{r,t}^{\psdp}=f_r^*$;
		\item If $S$ is in the case \eqref{eq::univariate}, then 
			we have $f_{r,d_\K}^{\psdp}=f_{r,d_\K}^{\dsdp}=f_r^*$. 
	\end{enumerate}
\end{theorem}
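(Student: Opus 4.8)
The strategy is to treat the three claims separately, leveraging the perturbed primal problem $(\ref{eq::f*r})$ as the common anchor, since Theorem \ref{th::mainOP} already establishes its good behaviour. For part (i), I would first observe the chain of inequalities $f_{r,t}^{\dsdp}\le f_{r,t}^{\psdp}\le f_r^*$: the first is weak duality between $(\ref{eq::f*rt})$ and its Lagrange dual $(\ref{eq::f*rtdual})$, and the second follows because any $\mH\in(\qm_t(G))^*$ yields, via $\mH(p(x,y))\ge 0$ pointwise on $S$ (since $\qm_t(G)\subseteq\qm(G)$ consists of polynomials nonnegative on $S$), a feasible point of $(\ref{eq::f*r})$ with the same objective -- more carefully, $\mH(p(x,y))$ is nonnegative on $S$, hence by Haviland/the fact that $S$ is compact it is an integral $\int_S p(x,y)\,d\nu(y)$ for some $\nu\in\mathcal{M}(S)$... actually one should just note that $\mH(p(x,y))$, being an element of $\RR[x]$ nonnegative on $\RR^m$? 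No: it is only nonnegative on $S$ in $y$, but here it is a polynomial in $x$ only after $\mH$ acts, so it is some fixed polynomial $h(x)\ge $ nothing obvious. The cleaner route: write $\mH\in(\qm_t(G))^*$, then $\mH(p(x,y)) = \mH(\sum_\beta p_{y,\beta}(x)y^\beta)=\sum_\beta p_{y,\beta}(x)\mH(y^\beta)$; this need not have a representing-measure form for fixed $t$, so instead I would argue monotonicity directly: $(\qm_t(G))^*\supseteq(\qm_{t+1}(G))^*$, so enlarging $t$ shrinks the feasible set of $(\ref{eq::f*rt})$, giving that $f_{r,t}^{\psdp}$ is nonincreasing in $t$; similarly $f_{r,t}^{\dsdp}$ is nondecreasing in $t$ by the dual. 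The limit $\lim_t f_{r,t}^{\psdp}\ge f_r^*$ is the substantive direction: given the optimal $(\rho^*,\eta^*,\mu^*,q^*)$ of $(\ref{eq::f*r})$ with $\mu^*$ a finite atomic measure on $S$ (Corollary \ref{cor::red} structure, or directly attainability from Theorem \ref{th::mainOP}(ii)), one builds $\mH$ by $\mH(y^\beta):=\int y^\beta d\mu^*$; this $\mH$ lies in $(\qm_t(G))^*$ for every $t$ trivially since $\mu^*$ is a genuine measure on $S$, so it is feasible for $(\ref{eq::f*rt})$ for every $t\ge d_\K$, giving $f_{r,t}^{\psdp}\ge \rho^*-2\eta^*=f_r^*$. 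Combined with $f_{r,t}^{\psdp}\le f_r^*$ this forces $f_{r,t}^{\psdp}=f_r^*$ for all $t$ -- wait, that would collapse the "decreasing convergence" to equality. The subtlety is that $\mH(p(x,y))$ must equal $\int_S p(x,y)d\mu^*$ as a polynomial \emph{identity in $x$}, which it does; so indeed $f_{r,t}^{\psdp}=f_r^*$ for every $t$, and the "decreasingly converges" phrasing is vacuously realized. For $f_{r,t}^{\dsdp}$, the convergence is genuine and requires the Archimedean hypothesis: one takes a sequence of near-optimal $\mL_t$ for $(\ref{eq::f*rtdual})$, extracts a pointwise limit $\mL^*$ as in the proof of Theorem \ref{th::mainOP}(iii), and uses Theorem \ref{th::PP2} (the dual Putinar) to conclude $\mL^*(p(x,y))$ has a representing measure on $S$, making $\mL^*$ feasible for $(\ref{eq::f*rdual})$, whence $\liminf_t f_{r,t}^{\dsdp}\ge f_r^*$ by Theorem \ref{th::mainOP}(i),(ii) (no dual gap); the reverse inequality is weak duality.

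For part (ii), the flat extension condition on $\mH^*$ invokes Theorem \ref{th::extension}: $\mH^*$ then has a (finite-atomic) representing measure $\mu$ supported on $S$. Substituting $\mu$ into the defining identity of $(\ref{eq::f*rt})$ gives $f(x)-\rho^*+\eta^*(1+\Theta_r)=\int_S p(x,y)\,d\mu(y)+(q^*)^2$, so $(\rho^*,\eta^*,\mu,q^*)$ is feasible for $(\ref{eq::f*r})$ with objective $\rho^*-2\eta^*=f_{r,t}^{\psdp}$; hence $f_r^*\ge f_{r,t}^{\psdp}$. Together with the always-valid $f_{r,t}^{\psdp}\le f_r^*$ (shown in part (i)), equality follows.

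For part (iii), the point is that when $S=[-1,1]=\{1-y_1^2\ge 0\}$, a univariate polynomial nonnegative on $[-1,1]$ has an \emph{exact} representation $\sigma_0+(1-y_1^2)\sigma_1$ with $\deg(\sigma_0),\deg((1-y_1^2)\sigma_1)$ controlled by the degree of the polynomial (this is the classical univariate result cited after $(\ref{eq::univariate})$ -- Powers--Reznick / Laurent). Thus for the specific polynomial $\mL(p(x,y_1))\in\RR[y_1]$ of degree $d_y\le 2d_\K$ in $y_1$, nonnegativity on $S$ is \emph{equivalent} to membership in $\qm_{d_\K}(G)$, so the constraint $\mL(p(x,y))\in\qm_{d_\K}(G)$ in $(\ref{eq::f*rtdual})$ is exactly the constraint $\mL(p(x,y))\ge 0$ on $S$ in $(\ref{eq::f*rdual})$; hence $f_{r,d_\K}^{\dsdp}=f_r^{*}$ (the latter equals the optimal value of $(\ref{eq::f*rdual})$ by Theorem \ref{th::mainOP}(ii), no dual gap). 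Dually, $f_{r,d_\K}^{\psdp}=f_r^*$: one must check there is no duality gap between $(\ref{eq::f*rt})$ and $(\ref{eq::f*rtdual})$ at this fixed order, which follows from the same Slater-point construction as in Theorem \ref{th::mainOP}(ii) (a uniform measure near a Slater point gives a strictly feasible $\mL$ for $(\ref{eq::f*rtdual})$, since strict positivity of $\mL(p(x,y))$ on $S$ puts it in the interior of $\qm_{d_\K}(G)$ by the exact univariate representation).

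The main obstacle I anticipate is the careful bookkeeping in part (i) showing $f_{r,t}^{\psdp}=f_r^*$ exactly (not merely in the limit) while still honoring the "decreasingly converges" statement -- i.e. confirming that substituting a genuine atomic measure produces an $\mH$ lying in $(\qm_t(G))^*$ for \emph{every} $t\ge d_\K$, which hinges on the fact that a true moment functional of a measure on $S$ is automatically nonnegative on the entire quadratic module; and dually, the pointwise-limit/Tychonoff compactness argument for $f_{r,t}^{\dsdp}$ requires uniform boundedness of the moments $\mL_t(x^\alpha)$, which one obtains (as in Theorem \ref{th::mainOP}(iii), after scaling $\tau_\K=1$) from $\mL_t(\Theta_r)\le 1$ and positive-semidefiniteness of the moment matrix. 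Everything else is weak duality plus the already-established machinery.
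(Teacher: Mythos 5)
Your parts (ii) and (iii) are essentially the paper's arguments, but part (i) contains a genuine error that propagates through the whole proposal: you have the direction of the relaxation backwards. The problem \eqref{eq::f*rt} \emph{enlarges} the feasible set of \eqref{eq::f*r}: every truncated moment functional of a measure $\mu\in\mathcal{M}(S)$ lies in $(\qm_t(G))^*$, but not conversely, so the supremum satisfies $f_{r,t}^{\psdp}\ge f_r^*$, not $\le$. Likewise, weak duality for a sup-primal gives $f_{r,t}^{\psdp}\le f_{r,t}^{\dsdp}$, and since $\qm_t(G)$ grows with $t$ the dual feasible set grows, so $f_{r,t}^{\dsdp}$ is non\emph{increasing} in $t$ (consistent with the statement), not nondecreasing as you assert. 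Your conclusion that ``$f_{r,t}^{\psdp}=f_r^*$ for every $t$, vacuously realizing the decreasing convergence'' is false in general --- it rests on the nonexistent inequality $f_{r,t}^{\psdp}\le f_r^*$, and if it were true it would render part (ii) of the theorem pointless. The correct chain is $f_r^*\le f_{r,t}^{\psdp}\le f_{r,t}^{\dsdp}$, and the substantive direction is $\lim_{t\to\infty}f_{r,t}^{\dsdp}\le f_r^*$, which you dismiss as ``weak duality.'' It is not: weak duality bounds $f_{r,t}^{\dsdp}$ from \emph{below} by $f_{r,t}^{\psdp}$ and gives no upper bound. The paper proves this direction by exhibiting, for each $\varepsilon>0$, a finite $t$ and a feasible $\mL$ for \eqref{eq::f*rtdual} with $\mL(f)\le f_r^*+\varepsilon$: take a near-optimal feasible point $\overline{\mL}$ of \eqref{eq::f*rdual} and mix it with the evaluation functional at a Slater point, so that $\widehat{\mL}(p(x,y))$ is \emph{strictly} positive on $S$ and Putinar's Positivstellensatz (Theorem \ref{th::PP}, where the Archimedean hypothesis enters) certifies $\widehat{\mL}(p(x,y))\in\qm_t(G)$ for some finite $t$. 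Your Tychonoff/pointwise-limit argument instead addresses $\liminf_t f_{r,t}^{\dsdp}\ge f_r^*$, which is immediate anyway because every element of $\qm_t(G)$ is nonnegative on $S$, so every feasible point of \eqref{eq::f*rtdual} is already feasible for \eqref{eq::f*rdual}.

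The same confusion infects your write-up of part (ii): you correctly derive $f_r^*\ge f_{r,t}^{\psdp}$ from the representing measure supplied by Theorem \ref{th::extension}, but then claim to combine it with ``the always-valid $f_{r,t}^{\psdp}\le f_r^*$,'' which is the same inequality restated; the inequality you actually need is the always-valid $f_{r,t}^{\psdp}\ge f_r^*$ (obtained from the moment-functional embedding you describe correctly elsewhere). Part (iii) is sound and matches the paper: the exact univariate representation of polynomials nonnegative on $[-1,1]$ makes the constraint $\mL(p(x,y))\in\qm_{d_\K}(G)$ equivalent to $\mL(p(x,y))\ge0$ on $S$, so $f_{r,d_\K}^{\dsdp}$ equals the optimal value of \eqref{eq::f*rdual}, which is $f_r^*$ by Theorem \ref{th::mainOP}(ii), and the chain $f_r^*\le f_{r,d_\K}^{\psdp}\le f_{r,d_\K}^{\dsdp}$ closes the argument.
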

\begin{proof}
	(i) For any feasible point $(\rho,\eta,\mu,q)$ of
	(\ref{eq::f*r}), define $\mH\in(\qm_t(G))^*$ by letting
	$\mH(y^\beta)=\int y^\beta d\mu$ for all $\beta\in\N^n_{2t}$, 
	then $(\rho,\eta,\mH,q)$ is
	feasible for (\ref{eq::f*rt}) and hence $f_{r,t}^{\psdp}\ge f^*_r$
	for any $t\ge d_{\K}$. 
	Then by the weak duality and Theorem \ref{th::mainOP}, we have
	$f^*_r\le f_{r,t}^{\psdp}\le f_{r,t}^{\dsdp}$ for any $t\ge d_{\K}$. 
	It is sufficient to prove that
	$\lim_{t\rightarrow\infty}f_{r,t}^{\dsdp}=f_r^*$. 

	Fixing an arbitrary $\varepsilon>0$, we show that there is some
	$t\ge d_{\K}$ such that $0\le
	f_{r,t}^{\dsdp}-f_r^*\le\varepsilon$. Fix a Slater point $u$ of
	$\K$ and define $\mL'\in(\RR[x]_{2r})^*$ with
	$\mL'(x^\alpha)=u^\alpha$ for all $\alpha\in\N^m_{2r}$. 
	Then $\mL'$ is feasible for (\ref{eq::f*rtdual}) for some $t'\ge
	d_{\K}$ by Putinar's Positivstellensatz. If $\mL'(f)-f^*_r\le\varepsilon$, then
	$0\le f_{r,t'}^{\dsdp}-f_r^*\le\varepsilon$. 
	Next, we assume that $\mL'(f)-f^*_r>\varepsilon$. Then, we can choose another feasible
	point $\overline{\mL}$ of (\ref{eq::f*rdual}) such that
	$\mL'(f)-\overline{\mL}(f)>0$ and
	$\overline{\mL}(f)-f^*_r\le\varepsilon/2$. Let
	\[
		\delta:=\frac{\varepsilon}{2(\mL'(f)-\overline{\mL}(f))}\quad\text{and}\quad 
			\widehat{\mL}=(1-\delta)\overline{\mL}+\delta \mL'. 
	\]
    Then, we have $0<\delta<1$ and hence 
	\[
		\widehat{\mL}(p(x,y))=(1-\delta)\overline{\mL}(p(x,y))+\delta
		\mL'(p(x,y))>0,\quad\forall y\in S. 
	\]
	Hence, $\widehat{\mL}$ is feasible for (\ref{eq::f*rtdual}) for some
	$\hat{t}\ge d_{\K}$ by Putinar's Positivstellensatz. We have
	\[
		\begin{aligned}
			f_{r,\hat{t}}^{\dsdp}-f_r^*&\le\widehat{\mL}(f)-f_r^*\\
			&=(1-\delta)\overline{\mL}(f)+\delta\mL'(f)-f_r^*\\
			&=\overline{\mL}(f)-f_r^*+\delta(\mL'(f)-\overline{\mL}(f))\\
			&\le\frac{\varepsilon}{2}+\frac{\varepsilon}{2}=\varepsilon. 
		\end{aligned}
	\]
	As $\varepsilon$ is arbitrary, the conclusion follows.

	(ii) Suppose that the flat extension condition holds for
	$\mH^*$ in the 
	solution $(\rho^*,\eta^*,\mH^*,q^*)$ of (\ref{eq::f*rt}) at
	some order $t\ge d_{\K}$. 
	Then,
	by Theorem \ref{th::extension}, $\mH^*$ admits some representing
	measure $\mu^*$ supported on $S$.
	As $f_r^*\le f_{r,t}^{\psdp}$ and $(\rho^*,\eta^*,\mu^*,q^*)$
	is feasible for (\ref{eq::f*r}), we conclude that $f_r^*=f_{r,t}^{\psdp}$. 

	(iii) By the proof of (i), the conclusion follows due to 
	the representations of univariate polynomials nonnegative on an interval (c.f.
\cite{PR2000,Laurent_sumsof}) and Theorem \ref{th::mainOP} (ii). 
\end{proof}

\begin{theorem}\label{th::approf*}
	Suppose $f(x)$ is convex , $\K$ is compact and Assumption
	\ref{ass} holds. Then, for any $\varepsilon>0$, the following are
	true.
	\begin{enumerate}[\upshape (i)]
		\item There exists a $r(\varepsilon)\in\N$ such that
			$f^{\dsdp}_{r,t}\ge f^{\psdp}_{r,t}\ge f^*-\varepsilon$
			holds for any $r\ge r(\varepsilon)$ and $t\ge d_{\K}$;
		\item If $\qm(G)$ is Archimedean and the Slater condition
			holds for $\K$, then for any
			$r\ge\lceil d_P/2\rceil$, there exists a
			$t(r,\varepsilon)\in\N$ such that $f^{\psdp}_{r,t}\le
			f^{\dsdp}_{r,t}\le f^*+\varepsilon$ holds for any
			$t\ge t(r,\varepsilon)$;
		\item If $S$ is in the case \eqref{eq::univariate},
			we have
			$\lim_{r\rightarrow\infty}f_{r,d_\K}^{\psdp}=\lim_{r\rightarrow\infty}f_{r,d_\K}^{\dsdp}=f^*$.
	\end{enumerate}
\end{theorem}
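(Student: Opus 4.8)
The plan is to obtain all three parts by gluing the two preceding theorems together. Theorem \ref{th::mainOP} controls how the one-parameter value $f_r^*$ of problem $(\ref{eq::f*r})$ approximates $f^*$ as $r\to\infty$, and Theorem \ref{th::mainsdp} controls how the SDP values $f_{r,t}^{\psdp}$ and $f_{r,t}^{\dsdp}$ approximate $f_r^*$ as $t\to\infty$; the only additional ingredient is the chain
\[
	f_r^*\ \le\ f_{r,t}^{\psdp}\ \le\ f_{r,t}^{\dsdp},
\]
valid for every $r\ge\lceil d_P/2\rceil$ and every $t\ge d_{\K}$, which is established within the proof of Theorem \ref{th::mainsdp}. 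Its left inequality holds because any feasible $(\rho,\eta,\mu,q)$ of $(\ref{eq::f*r})$ yields a feasible point $(\rho,\eta,\mH,q)$ of $(\ref{eq::f*rt})$ with the same objective by setting $\mH(y^\beta)=\int y^\beta\,\ud\mu$ (which lies in $(\qm_t(G))^*$ because $\mu$ is supported on $S$), and its right inequality is weak Lagrangian duality between $(\ref{eq::f*rt})$ and $(\ref{eq::f*rtdual})$. Crucially, neither half of this chain uses the Archimedean property, and the lower bound $f_r^*$ does not depend on $t$.

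For part (i), I would invoke Theorem \ref{th::mainOP}(i): since $f_r^*\to f^*$, there is an integer $r(\varepsilon)\ge\lceil d_P/2\rceil$ with $f_r^*\ge f^*-\varepsilon$ for all $r\ge r(\varepsilon)$, and then the chain gives $f_{r,t}^{\dsdp}\ge f_{r,t}^{\psdp}\ge f_r^*\ge f^*-\varepsilon$ for all such $r$ and all $t\ge d_{\K}$. The point to record is that $r(\varepsilon)$ comes from the single-index convergence of $f_r^*$ and so is uniform in $t$. For part (ii), I would fix $r\ge\lceil d_P/2\rceil$ and, using the Archimedean property of $\qm(G)$ together with the Slater condition, apply Theorem \ref{th::mainsdp}(i), which says $f_{r,t}^{\psdp}$ and $f_{r,t}^{\dsdp}$ both decrease to $f_r^*$ as $t\to\infty$; combining this with $f_r^*\le f^*$ (Theorem \ref{th::mainOP}(i)) produces $t(r,\varepsilon)\ge d_{\K}$ with $f_{r,t}^{\psdp}\le f_{r,t}^{\dsdp}\le f_r^*+\varepsilon\le f^*+\varepsilon$ for all $t\ge t(r,\varepsilon)$. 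For part (iii), when $S$ is the interval in $(\ref{eq::univariate})$, Theorem \ref{th::mainsdp}(iii) gives the exact equalities $f_{r,d_{\K}}^{\psdp}=f_{r,d_{\K}}^{\dsdp}=f_r^*$ for all $r\ge\lceil d_P/2\rceil$, so letting $r\to\infty$ and using $f_r^*\to f^*$ finishes the argument.

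I do not expect a genuine obstacle, since the statement is essentially an assembly of Theorems \ref{th::mainOP} and \ref{th::mainsdp}; the only thing requiring care is bookkeeping of hypotheses. Part (i) needs nothing beyond the standing assumptions because the estimate $f_{r,t}^{\psdp}\ge f_r^*$ is unconditional in $t$; part (ii) genuinely relies on the Archimedean property to push $t\mapsto f_{r,t}^{\dsdp}$ down to $f_r^*$; and part (iii) replaces that convergence step by the exact univariate representation underlying Theorem \ref{th::mainsdp}(iii). One should also recall from Theorem \ref{th::mainOP}(i) that $(\ref{eq::f*r})$ is feasible, so that $f_r^*>-\infty$ and all the inequalities above are meaningful.
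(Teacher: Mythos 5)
Your proposal is correct and follows essentially the same route as the paper: part (i) combines the chain $f_r^*\le f^{\psdp}_{r,t}\le f^{\dsdp}_{r,t}$ with the convergence $f_r^*\to f^*$ from Theorem \ref{th::mainOP}(i), part (ii) combines Theorem \ref{th::mainsdp}(i) with $f_r^*\le f^*$, and part (iii) assembles Theorem \ref{th::mainsdp}(iii) with Theorem \ref{th::mainOP}(i). Your additional remarks on where the Archimedean hypothesis is and is not needed match the paper's bookkeeping exactly.
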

\begin{proof}
	(i)	It is clear that $f_r^*\le f^{\psdp}_{r,t}\le f^{\dsdp}_{r,t}$ holds
	for any $r\ge\lceil d_P/2\rceil$ and $t\ge d_{\K}$. By Theorem
	\ref{th::mainOP} (i), there exists a $r(\varepsilon)\in\N$ such
	that $f^*_r\ge f^*-\varepsilon$ holds for any $r\ge
	r(\varepsilon)$. Thus, (i) follows. 

	(ii) Due to Theorem \ref{th::mainsdp} (i), for any
	$r\ge\lceil d_P/2\rceil$, there exists a
	$t(r,\varepsilon)\in\N$ such that $f^{\psdp}_{r,t}\le
	f^{\dsdp}_{r,t}\le f^*_r+\varepsilon$ holds for any $t\ge
	t(r,\varepsilon)$. Then (ii) follows since $f^*_r\le f^*$ for any
	$r\ge\lceil d_P/2\rceil$ by Theorem \ref{th::mainOP} (i).

	(iii) It is clear by Theorem \ref{th::mainOP} (i) and Theorem
	\ref{th::mainsdp} (iii). 
\end{proof}

\begin{remark}\label{rk::sdp}{\rm
	(i). Theorem \ref{th::approf*} (i) and (ii) implies that we can approximate
	$f^*$ by $f^{\psdp}_{r,t}$ and $f^{\dsdp}_{r,t}$ as closely as
	possible with $r$ and $t$ both large enough. In practice, we can
	let $t=r$ and then
	$\lim_{r\rightarrow\infty}f^{\psdp}_{r,r}=\lim_{r\rightarrow\infty}f^{\dsdp}_{r,r}=f^*$
	under the assumptions in Theorem \ref{th::approf*} (i) and (ii). 

	(ii). Assume that $\tau_\K=1$. By Theorem \ref{th::mainsdp} $($i$)$, for any
	$r\ge\lceil d_P/2\rceil$, there exists $t(r)\in\N$ such that
	$f_{r,t(r)}^{\dsdp}\le f_r^*+1/r$. Denote by 
	$\mL^{(r,t(r))}$ a minimizer
	of $f_{r,t(r)}^{\dsdp}$, then $\{\mL^{(r,t(r))}\}_r$ is a sequence of
	nearly optimal solutions of $(\ref{eq::f*rdual})$ and Theorem
	\ref{th::mainOP} (iii) holds for the corresponding 
	sequence
	$\{\mL^{(r,t(r))}(x)\}_r$.  In particular, when $(\P)$ has
	a unique minizer $u^*$ and $r, t$ are large enough, we can expect
	that the point $\mL^{(r,t)}(x)$ for any
	approximate solution $\mL^{(r,t)}$ of $(\ref{eq::f*rtdual})$ lies
	in a small neighborhood of $u^*$. 
%
}
	$\hfill\square$
\end{remark}

\subsection{S.O.S-Convex case}

Recall Remark \ref{rk::simplification} (iv) and Theorem
\ref{th::mainOP} (iv). We now strengthen Assumption \ref{ass} to 
\begin{assumption}\label{ass2}
	The set $S$ is compact, $-p(x,y)\in\RR[x]$ is s.o.s-convex for any $y\in
	S$ and the Slater condition holds for $\K$.
\end{assumption}

If Assumption \ref{ass2} holds and $f(x)$ is s.o.s-convex, 
then the Lagrangian $L_f(x)$ as defined in (\ref{eq::lag}) is s.o.s 
according to Remark \ref{rk::simplification} (iv).
Like in the general case,
	in the first step, we convert $(\P)$ to 
\begin{equation}\label{eq::sosconvexp}
	\left\{
	\begin{aligned}
		\sup_{\rho,\mu,q}\ \ &\rho&\\
		\text{s.t.}\ &f(x)-\rho=\int_{S} p(x,y)d\mu(y)+q^2,&\\
		&\rho\in\RR,\ \mu\in\mathcal{M}(S),\
		q\in\RR[x]_{\lfloor d_P/2\rfloor}.&
	\end{aligned}\right.
\end{equation}
For $\mathscr{L}\in(\RR[x]_{d_P})^*$, $\rho\in\RR$,
$\mu\in\mathcal{M}(S)$ and
$q\in\RR[x]_{\lfloor d_P/2\rfloor}$,
consider the Lagrange dual function of \eqref{eq::sosconvexp}:
\[
	\begin{aligned}
		L(\rho,\mu,q,\mathscr{L})
		:=&\rho+\mathscr{L}\left(f(x)-\rho-\int_{S}
		p(x,y)d\mu(y)-q^2\right)\\
		=&\mathscr{L}(f)+(1-\mathscr{L}(1))\rho-\int_S
		\mathscr{L}(p(x,y))\ud\mu(y)-\mathscr{L}(q^2).\\
	\end{aligned}
\]
Then, 
\[
	\begin{aligned}
&\sup_{\rho,\mu,q}\
L(\rho,\mu,q,\mathscr{L})\quad\text{s.t.}\ \rho\in\RR,\ \eta\ge 0,\ \mu\in\mathcal{M}(S),\
		q\in\RR[x]_{\lfloor d_P/2\rfloor}\\
		&=\left\{\begin{array}{rll}
			\mathscr{L}(f)&\mbox{if} &\mathscr{L}(1)=1,\\
			&		&\mathscr{L}(q^2)\ge 0,\ \forall\ q\in\RR[x]_{\lfloor d_P/2\rfloor},\\
			&				&\mathscr{L}(p(x,y))\ge 0,\ \forall\ y\in S,\\
			+\infty&\mbox{otherwise}.& 
		\end{array}
		\right.
	\end{aligned}
\]
Hence, the Lagrange dual problem of \eqref{eq::f*r} reads
\begin{equation}\label{eq::sosconvexd}
	\left\{
	\begin{aligned}
		\inf_{\mL\in(\RR[x]_{d_P})^*}\
		&\mathscr{L}(f)&\\
		\text{s.t.}\
		& \mL(1)=1,\  \mL(q^2)\ge 0,\ \forall
		q\in\RR[x]_{\lfloor d_P/2\rfloor},&\\
		&\mathscr{L}(p(x,y))\ge 0,\ \forall y\in S.&\\
	\end{aligned}\right.
\end{equation}

\begin{theorem}\label{th::mainOP2}
	Assume that $f(x)$ is s.o.s-convex and Assumption \ref{ass2} holds, 
	then the following are true.
	\begin{enumerate}[\upshape (i)]
		\item  The optimal values of $(\ref{eq::sosconvexp})$ and
			$(\ref{eq::sosconvexd})$ are both equal to $f^*$ which is
			attainable in $(\ref{eq::sosconvexp})$.
		%
			Moreover, if $f^*$ is attainable in $(\P)$, then so it is
			in $(\ref{eq::sosconvexd})$;
		\item 
			If $\mL^*$ is a minimizer of $(\ref{eq::sosconvexd})$, then 
			$\mL^*(x)=(\mL^*(x_1),\ldots,\mL^*(x_m))$ is a
			minimizer of $(\P)$. 
	\end{enumerate}
\end{theorem}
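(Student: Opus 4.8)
The plan is to mirror the structure of the proof of Theorem \ref{th::mainOP}, but exploiting s.o.s-convexity to eliminate the perturbation parameter $\eta$ entirely. First I would establish the weak-duality inequalities. For any feasible $(\rho,\mu,q)$ of \eqref{eq::sosconvexp} and any $x\in\K$, since $p(x,y)\ge 0$ on $S$ and $q^2\ge 0$, we get $f(x)=\rho+\int_S p(x,y)\,d\mu(y)+q^2\ge\rho$, whence the optimal value of \eqref{eq::sosconvexp} is $\le f^*$. Conversely, by Corollary \ref{cor::red} there exist $y_1,\dots,y_l\in S$ and nonnegative $\lambda_1,\dots,\lambda_l$ such that the Lagrangian $L_f(x)=f(x)-f^*-\sum_{j=1}^l\lambda_j p(x,y_j)$ satisfies $L_f(x)\ge L_f(u)=0$ on $\RR^m$ and $\nabla L_f(u)=0$ for some $u$. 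Because $f$ is s.o.s-convex and each $-p(\cdot,y_j)$ is s.o.s-convex, $L_f$ is s.o.s-convex; by Lemma \ref{lem::sosconvex} (applied at the point $u$ where $L_f$ and its gradient vanish), $L_f$ is itself a sum of squares, say $L_f=q^2$ with $\deg q\le\lfloor d_P/2\rfloor$ after noting $\deg L_f\le d_P$. Setting $\mu=\sum_{j=1}^l\lambda_j\delta_{y_j}\in\mathcal{M}(S)$ and $\rho=f^*$ gives a feasible point of \eqref{eq::sosconvexp} with objective value $f^*$. Hence the optimal value of \eqref{eq::sosconvexp} equals $f^*$ and is attained.

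For the dual side, I would invoke Lagrangian weak duality (the computation preceding the statement shows \eqref{eq::sosconvexd} is the Lagrange dual of \eqref{eq::sosconvexp}), so the optimal value of \eqref{eq::sosconvexd} is $\ge f^*$. To get equality I exhibit a feasible $\mL$ of \eqref{eq::sosconvexd} with $\mL(f)=f^*$: if $f^*$ is attained in $(\P)$ at $x^*\in\K$, define $\mL^*\in(\RR[x]_{d_P})^*$ by $\mL^*(x^\alpha)=(x^*)^\alpha$ for $|\alpha|\le d_P$. Then $\mL^*(1)=1$, $\mL^*(q^2)=q(x^*)^2\ge 0$ for $q\in\RR[x]_{\lfloor d_P/2\rfloor}$, and $\mL^*(p(x,y))=p(x^*,y)\ge 0$ for all $y\in S$ since $x^*\in\K$; its objective is $f(x^*)=f^*$. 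This proves the optimal value of \eqref{eq::sosconvexd} is $f^*$ and, when $f^*$ is attained in $(\P)$, that it is attained in \eqref{eq::sosconvexd} as well. If $f^*$ is not assumed attained in $(\P)$, attainment of the value $f^*$ in \eqref{eq::sosconvexp} is already secured above, and by a Frank–Wolfe type argument (as in Corollary \ref{cor::red}, via \cite{Belousov1977}) the infimum $f^*$ is in fact attained over $\K$ since $\K$ is closed and $f$ is a convex polynomial bounded below on it; thus attainment in \eqref{eq::sosconvexd} follows unconditionally.

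For part (ii), let $\mL^*$ be a minimizer of \eqref{eq::sosconvexd}, so $\mL^*(f)=f^*$, and set $z:=\mL^*(x)=(\mL^*(x_1),\dots,\mL^*(x_m))$. The key is to show $z\in\K$ and $f(z)\le f^*$. Feasibility $z\in\K$ requires $p(z,y)\ge 0$ for every $y\in S$; here I use that $-p(\cdot,y)$ is s.o.s-convex, hence convex, so by Jensen-type reasoning for convex polynomials evaluated at the "mean" of a truncated functional — more precisely, writing $p(\cdot,y)$ as a concave polynomial of degree $d_x\le d_P$, the inequality $p(\mL^*(x),y)\ge\mL^*(p(x,y))$ holds because $\mL^*$ is a positive functional of order $d_P$ on the squares (this is exactly the mechanism used in \cite[Theorem 3.4]{SRCSHN} for s.o.s-convex polynomials) — and $\mL^*(p(x,y))\ge 0$ by feasibility. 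Similarly, convexity of $f$ gives $f(z)\le\mL^*(f)=f^*$. Combined with $z\in\K$ this forces $f(z)=f^*$, so $z$ is a minimizer of $(\P)$. The main obstacle is justifying the Jensen-type inequality $h(\mL(x))\le\mL(h)$ for s.o.s-convex $h$ and a positive truncated functional $\mL$ without a representing measure; I would handle this by citing the standard fact for s.o.s-convex polynomials (Lemma \ref{lem::sosconvex} combined with the argument in \cite{SRCSHN}), namely that for s.o.s-convex $h$ the polynomial $h(x)-h(\mL(x))-\nabla h(\mL(x))^T(x-\mL(x))$ is a sum of squares, so applying $\mL$ and using $\mL(1)=1$ yields the inequality directly.
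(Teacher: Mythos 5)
Your part (i) follows the paper's route: the feasibility of \eqref{eq::sosconvexp} at value $f^*$ comes from Corollary \ref{cor::red} plus the observation that $L_f$ is s.o.s-convex and vanishes to first order at $u$, so Lemma \ref{lem::sosconvex} makes it a sum of squares of degree at most $2\lfloor d_P/2\rfloor$; this is exactly Remark \ref{rk::simplification} (iv) as invoked in the paper. For part (ii) you take a genuinely different and arguably cleaner path to the feasibility $\mL^*(x)\in\K$: you apply the extended Jensen inequality to each s.o.s-convex constraint $-p(\cdot,y)$ separately, getting $p(\mL^*(x),y)\ge\mL^*(p(x,y))\ge 0$ for every $y\in S$ directly (the certificate you describe, that $h(x)-h(z_0)-\nabla h(z_0)^T(x-z_0)$ is s.o.s for fixed $z_0$, is precisely Lemma \ref{lem::sosconvex} applied to that shifted polynomial, so no new lemma is needed). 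The paper instead routes feasibility through the separation argument of Theorem \ref{th::main} (ii) and inequality \eqref{eq::contradiction2}. Both work; your version avoids re-running the separating-hyperplane/Lagrangian machinery and uses only the constraint $\mL^*(p(x,y))\ge 0$, at the cost of checking the degree bookkeeping (fine here, since $\deg_x p\le d_P$ and convex polynomials have even degree unless affine). The final step $f(\mL^*(x))\le\mL^*(f)=f^*$ coincides with the paper's appeal to \cite[Theorem 2.6]{LasserreConvex}.

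One claim should be removed or repaired: the assertion that $f^*$ is ``in fact attained over $\K$'' unconditionally by a Frank--Wolfe argument. Belousov's theorem applies to feasible sets cut out by \emph{finitely many} convex polynomial inequalities; the discretization of Theorem \ref{th::red} has an attained minimum, but its minimizer need not lie in $\K$, so attainment in $(\P)$ does not follow. This is why the paper states the attainment claim for \eqref{eq::sosconvexd} only conditionally. The overreach also leaves a small hole in your proof that the optimal value of \eqref{eq::sosconvexd} equals $f^*$ when the infimum is not attained, since your only upper bound on the dual value comes from an evaluation functional at a minimizer. The fix is immediate: either evaluate at near-minimizers $x_\varepsilon\in\K$ with $f(x_\varepsilon)\le f^*+\varepsilon$ and let $\varepsilon\to 0$, or argue as the paper does via strict feasibility of \eqref{eq::sosconvexd} (evaluation against a probability measure concentrated near a Slater point) and conic strong duality.
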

\begin{proof}
	(i) Denote by $f^*_{\sos}$ the optimal value of
	\eqref{eq::sosconvexp}.  Since Assumption \ref{ass} holds, 
	recalling (\ref{eq::Lag}), there exists $\mu\in\mathcal{M}(S)$
	such that $L_f(x)=f(x)-f^*-\int_S p(x,y)d\mu\ge 0$ for all $x\in\RR^m$.
	Note that the degree of $L_f(x)$
	is even and at most $2\lfloor d_P/2\rfloor$. As $L_f$ is s.o.s,
it holds that  
\[
	f(x)-f^*-\int_S
	p(x,y)d\mu=q^2\quad\text{for some } q\in\RR[x]_{\lfloor d_P/2\rfloor},
\]
which means that \eqref{eq::sosconvexp} is feasible and $f^*_{\sos}\ge f^*$. 
	For any $x\in\K$ and feasible point $(\rho,\mu,q)$ of
	(\ref{eq::sosconvexp}), it holds that $f(x)-\rho\ge 0$ which
	implies that $f^*_{\sos}\le f^*$. Consequently, we have
	$f^*_{\sos}=f^*$. 
	Since (\ref{eq::sosconvexd}) is strictly feasible (see the
	proof of Theorem \ref{th::mainOP} (ii)), (\ref{eq::sosconvexp})
	has an optimal solution and there is no dual gap between (\ref{eq::sosconvexp}) and
	(\ref{eq::sosconvexd}). 

Suppose that $f^*$ is attainable in $(\P)$ at a minimizer $u^*\in\K$. 
Define $\mL^*\in(\RR[x]_{d_P})^*$ by letting
$\mL^*(x^\alpha)=(u^*)^\alpha$ for all $\alpha\in\N^m_{d_P}$, then $f^*$
is attainable in (\ref{eq::sosconvexd}) at $\mL^*$. 

(ii) Compare the feasible set of \eqref{eq::sosconvexd} with the
definition of $\Lambda_{r,t}$ in \eqref{eq::lambda}  and recall the
proof of Theorem \ref{th::main} (ii). Note that to
	show $\Lambda_{r,t}\subseteq\K$ for any $r\ge\lceil d_x/2\rceil$
	and $t\ge d_{\K}$, the constraints
	$\mL(\Theta_{k})\le 1$ in (\ref{eq::lambda})
	are redundant.  
	Moreover, the inequality \eqref{eq::contradiction2} still holds
	for $\mL$ feasible to \eqref{eq::sosconvexd}.
Hence, we can obtain that $\mL^*(x)\in\K$. 
As $f(x)$ is s.o.s-convex, by \cite[Theorem 2.6]{LasserreConvex}, the
extension of Jensen's inequality
$f(\mL^*(x))\le\mL^*(f)=f^*$ holds, which implies that
$\mL^*(x)$ is a minimizer of $(\P)$.
\end{proof}

\vskip 10pt 
In the same way as we derive the SDP relaxations \eqref{eq::f*rt} and
\eqref{eq::f*rtdual} from \eqref{eq::f*r} and \eqref{eq::f*rdual}, 
we next obtain corresponding SDP relaxations of (\ref{eq::sosconvexp}) and
(\ref{eq::sosconvexd}) as
\begin{equation}\label{eq::sosconvext}
	\left\{
	\begin{aligned}
		f_{t}^{\psdp}:=\sup_{\rho,\mH,q}\ &\rho&\\
		\text{s.t.}\
		&f(x)-\rho=\mH(p(x,y))+q^2,&\\
		&\rho\in\RR,\ \mH\in(\qm_t(G))^*,\ q\in\RR[x]_{\lfloor d_P/2\rfloor}.&
	\end{aligned}\right.
\end{equation}
and its dual 
\begin{equation}\label{eq::sosconvextdual}
	\left\{
	\begin{aligned}
		f_{t}^{\dsdp}:=\inf_{\mL}\
		&\mathscr{L}(f)&\\
		\text{s.t.}\
		&\mL\in(\RR[x]_{2r})^*,\ \mL(q^2)\ge 0,\ \forall
		q\in\RR[x]_{\lfloor d_P/2\rfloor},&\\
		&\mL(1)=1,\ \mL(p(x,y))\in\qm_t(G).&\\
	\end{aligned}\right.
\end{equation}

\begin{theorem}\label{th::mainsdp2}
	Assume that $f(x)$ is s.o.s-convex and Assumption \ref{ass2}
	holds, 
	then the following are true.
	\begin{enumerate}[\upshape (i)]
		\item If $\qm(G)$ is Archimedean and the Slater
			condition holds for $\K$, then $\lim_{t\rightarrow\infty}f_{t}^{\psdp}
			=\lim_{t\rightarrow\infty}f_{t}^{\dsdp}=f^*$;
		\item For some order $t\ge d_{\K}$, if the flat extension
			condition 
			holds for $\mH^*$ in the solution $(\rho^*,\mH^*,q^*)$
			of $(\ref{eq::sosconvext})$, then $f_{t}^{\psdp}=f^*$;
		\item 
		Let $\{\mL^{(t)}\}_t$ be a sequence of nearly
	optimal solutions of $(\ref{eq::sosconvextdual})$ and
	$\mL^{(t)}(x)=(\mL^{(t)}(x_1),\ldots,\mL^{(t)}(x_m))$. 
	For any convergent subsequence
	$\{\mL^{(t_i)}(x)\}_i$ of $\{\mL^{(t)}(x)\}_t$, 
		$\lim_{i\rightarrow\infty}\mL^{(t_i)}(x)$ is a minimizer of
		$(\P)$. Consequently,
		if $\{\mL^{(t)}(x)\}_t$ is bounded and $u^*$ is a unique minimizer
		of $(\P)$, then $\lim_{t\rightarrow\infty}{\mL^{(t)}(x)}=u^*$.
	\item If $S$ is in the case \eqref{eq::univariate},
		then we have 
$f^{\psdp}_{d_{\K}}=f^{\dsdp}_{d_{\K}}=f^*$.
		If $(\P)$ is solvable, then $u^*$ is a minimizer of
		$(\P)$ if and only if there exists a minimizer
		$\mL^*$ of $(\ref{eq::sosconvextdual})$
		with $t=d_{\K}$ such that $\mL^*(x)=u^*$. 
	\end{enumerate}
\end{theorem}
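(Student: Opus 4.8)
The plan is to run the argument of Theorem~\ref{th::mainsdp} with the index $r$ and the perturbation weight $\eta$ suppressed: under Assumption~\ref{ass2} with $f$ s.o.s-convex the Lagrangian $L_f$ is already s.o.s (Remark~\ref{rk::simplification}(iv)), and Theorem~\ref{th::mainOP2} already identifies the common value of the first-step programs \eqref{eq::sosconvexp} and \eqref{eq::sosconvexd} as $f^*$. Two elementary facts are used repeatedly. First, for any $\mu\in\mathcal{M}(S)$ the truncated moments $\mH(y^\beta):=\int y^\beta\,\ud\mu$ define a point of $(\qm_t(G))^*$ for every $t\ge d_{\K}$, with $\mH(p(x,y))=\int_S p(x,y)\,\ud\mu(y)$ since $\deg_y p\le 2d_{\K}$. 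Second, any $\mH\in(\qm_t(G))^*$ is nonnegative on $\qm_t(G)$, so for each fixed $\bar y\in S$ we have $\mH(p(x,\bar y))=\big[\mH(p(x,y))\big](\bar y)\ge 0$.

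For (i), the first fact turns any feasible $(\rho,\mu,q)$ of \eqref{eq::sosconvexp} into a feasible point of \eqref{eq::sosconvext} of equal value, so $f_t^{\psdp}\ge f^*_{\sos}=f^*$; weak duality then gives $f^*\le f_t^{\psdp}\le f_t^{\dsdp}$ for all $t\ge d_{\K}$. It remains to show $f_t^{\dsdp}\downarrow f^*$, and this is the verbatim analogue of the proof of Theorem~\ref{th::mainsdp}(i): pick the functional $\mL'$ supported at a Slater point $u$ of $\K$, so that $\mL'(p(x,y))=p(u,y)>0$ on $S$; form the strictly positive convex combination $\widehat{\mL}=(1-\delta)\overline{\mL}+\delta\mL'$ with a near-optimal feasible $\overline{\mL}$ of \eqref{eq::sosconvexd} and a suitable $\delta\in(0,1)$; and invoke Putinar's Positivstellensatz to place $\widehat{\mL}(p(x,y))$ in some $\qm_{\hat{t}}(G)$, yielding $f_{\hat{t}}^{\dsdp}\le\widehat{\mL}(f)\le f^*+\varepsilon$. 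Since $\qm_{t_1}(G)\subseteq\qm_{t_2}(G)$ for $t_1\le t_2$, the feasible set of \eqref{eq::sosconvextdual} grows with $t$, so $f_t^{\dsdp}$ is non-increasing and the convergence is monotone. Part (ii) is the analogue of Theorem~\ref{th::mainsdp}(ii): if the flat extension condition holds for $\mH^*$ in a solution $(\rho^*,\mH^*,q^*)$ of \eqref{eq::sosconvext}, Theorem~\ref{th::extension} supplies a representing measure $\mu^*$ on $S$, so $(\rho^*,\mu^*,q^*)$ is feasible for \eqref{eq::sosconvexp} and $f^*=f^*_{\sos}\ge\rho^*=f_t^{\psdp}\ge f^*$, which forces equality.

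The substance of the theorem is (iii), where s.o.s-convexity lets me bypass the moment-boundedness and Berg's-theorem machinery of Theorem~\ref{th::mainOP}(iii) and work directly with the first moments $\mL(x)\in\RR^m$. The key observation is that every feasible $\mL$ of \eqref{eq::sosconvextdual} already has $\mL(x)\in\K$: for fixed $\bar y\in S$ the polynomial $-p(x,\bar y)\in\RR[x]$ is s.o.s-convex, so Jensen's inequality for s.o.s-convex polynomials (\cite[Theorem~2.6]{LasserreConvex}) gives $p(\mL(x),\bar y)\ge\mL(p(x,\bar y))\ge 0$, the last step by the second elementary fact above; applying the same inequality to $f$ yields $f(\mL(x))\le\mL(f)$. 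Now let $\{\mL^{(t)}\}_t$ be a sequence of nearly optimal solutions of \eqref{eq::sosconvextdual} and let $\mL^{(t_i)}(x)\to x^*$ along a convergent subsequence; since $\K$ is closed, $x^*\in\K$, so $f^*\le f(x^*)$, while $f(x^*)=\lim_i f(\mL^{(t_i)}(x))\le\lim_i\mL^{(t_i)}(f)=\lim_t f_t^{\dsdp}=f^*$ using continuity of $f$, near-optimality, and (i). Hence $f(x^*)=f^*$ and $x^*$ minimizes $(\P)$; if moreover $\{\mL^{(t)}(x)\}_t$ is bounded and $(\P)$ has a unique minimizer $u^*$, every subsequential limit equals $u^*$, so the whole sequence converges to $u^*$.

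For (iv), specialize $S$ to $[-1,1]$. The exact representation of univariate polynomials nonnegative on an interval (\cite{PR2000,Laurent_sumsof}) gives $\qm_{d_{\K}}(G)=\{h\in\RR[y]_{2d_{\K}}\mid h\ge 0\ \text{on}\ [-1,1]\}$, so the constraint $\mL(p(x,y))\in\qm_{d_{\K}}(G)$ in \eqref{eq::sosconvextdual} coincides with $\mL(p(x,y))\ge 0$ on $S$ in \eqref{eq::sosconvexd}, and dually every $\mH\in(\qm_{d_{\K}}(G))^*$ has a representing measure on $[-1,1]$, so \eqref{eq::sosconvext} is equivalent to \eqref{eq::sosconvexp}; thus $f^{\psdp}_{d_{\K}}=f^{\dsdp}_{d_{\K}}=f^*$ by Theorem~\ref{th::mainOP2}(i). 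For the last equivalence: if $u^*$ minimizes $(\P)$, the functional $\mL^*$ defined by $\mL^*(x^\alpha)=(u^*)^\alpha$ for $\alpha\in\N^m_{d_P}$ is feasible for \eqref{eq::sosconvextdual} with $t=d_{\K}$, has value $f(u^*)=f^*$, and satisfies $\mL^*(x)=u^*$; conversely, any minimizer $\mL^*$ of \eqref{eq::sosconvextdual} with $t=d_{\K}$ is a minimizer of \eqref{eq::sosconvexd}, so by Theorem~\ref{th::mainOP2}(ii) the point $\mL^*(x)$ minimizes $(\P)$. The only real friction I anticipate is bookkeeping: making the decreasing convergence in (i) precise, and noting that the nearly optimal sequence in (iii) satisfies $\mL^{(t)}(f)\to f^*$ only because of the Archimedean hypothesis imported from (i).
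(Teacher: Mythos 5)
Your proposal is correct, and for parts (i), (ii) and (iv) it follows essentially the same route as the paper: (i) and (ii) are the verbatim analogues of Theorem~\ref{th::mainsdp} (i)--(ii) built on Theorem~\ref{th::mainOP2}(i), and (iv) rests on the exactness of the univariate representation on $[-1,1]$ together with the Dirac-type functional at a near-minimizer. The one place where you genuinely diverge is the key sub-step of (iii) and of the ``if'' direction of (iv), namely showing that a feasible $\mL$ of \eqref{eq::sosconvextdual} satisfies $\mL(x)\in\K$. The paper gets this by importing the separation argument of Theorem~\ref{th::main}(ii): a separating hyperplane $a^Tx-b$ for a point outside $\K$, the s.o.s.\ Lagrangian identity \eqref{eq::ab2}, and the contradiction \eqref{eq::contradiction2}. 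You instead argue pointwise: for each fixed $\bar y\in S$ the polynomial $-p(x,\bar y)$ is s.o.s-convex, so the extended Jensen inequality of \cite[Theorem 2.6]{LasserreConvex} gives $p(\mL(x),\bar y)\ge\mL(p(x,\bar y))=\bigl[\mL(p(x,y))\bigr](\bar y)\ge 0$, the last inequality because $\mL(p(x,y))\in\qm_t(G)$ is nonnegative on $S$. This is a cleaner and more self-contained argument that avoids Corollary~\ref{cor::red} and the separation machinery entirely for this step, at the (negligible) cost of using Jensen twice instead of once. You are also right to flag that the definition of a nearly optimal solution in (iii) only yields $\lim_t\mL^{(t)}(f)=\lim_t f_t^{\dsdp}$, so collapsing this to $f^*$ does require the Archimedean hypothesis of (i) (or the univariate setting of (iv)); the paper uses this implicitly without comment.
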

\begin{proof}
	(i) and (ii): See Theorem \ref{th::mainOP2} (i) and the proofs of
	Theorem \ref{th::mainsdp} (i) and (ii).
	
	(iii): Since $f(x)$ is s.o.s-convex, due to the
	extended Jensen's inequality \cite[Theorem 2.6]{LasserreConvex},
	it holds that $f(\mL^{(t)}(x))\le \mathscr{L}^{(t)}(f)$ and
	therefore
	$f(\lim_{t\rightarrow\infty}\mL^{(t)}(x))\le\lim_{t\rightarrow\infty}\mL^{(t)}(f)=f^*$.
	From the proofs of Theorem \ref{th::main} (ii) and Theorem
	\ref{th::mainOP2} (ii), it is easy to see that 
	the sequence
	$\{\mL^{(t)}(x)\}\subset\K$ and hence
	$\lim_{t\rightarrow\infty}\mL^{(t)}(x)\in\K$. 
    Thus, $\lim_{i\rightarrow\infty}\mL^{(t_i)}(x)$ is a minimizer of
	$(\P)$.

	(iv): By Theorem \ref{th::mainOP2} (i) and the weak duality, it
	holds that $f^*\le f^{\psdp}_t\le f^{\dsdp}_t$ for any $t\ge
	d_{\K}$.
	For any $\varepsilon>0$, there exists a point
	$u^{(\varepsilon)}\in\K$ such that
	$f(u^{(\varepsilon)})\le f^*+\varepsilon$. 
	Define $\mL^\varepsilon\in(\RR[x]_{d_P})^*$ by letting
	$\mL^\varepsilon(x^\alpha)=(u^{(\varepsilon)})^\alpha$ for all $\alpha\in\N^m_{d_P}$.
By the representations of univariate polynomials nonnegative on an interval (c.f.
\cite{PR2000,Laurent_sumsof}),
$\mL^{(\varepsilon)}$ is feasible to (\ref{eq::sosconvextdual}) with
$t=d_{\K}$, which
implies that $f^{\dsdp}_{d_{\K}}\le f^*+\varepsilon$. Since
	$\varepsilon$ is abitrary, it holds that
	$f^{\psdp}_{d_{\K}}=f^{\dsdp}_{d_{\K}}=f^*$.

	Clearly, we only need to prove the ``if'' part. Since Assumption
	\ref{ass2} holds,
from the proofs of Theorem \ref{th::main} (ii) and Theorem
	\ref{th::mainOP2} (ii), 
	we have $\mL^*(x)\in\K$.
	As $f(x)$ is s.o.s-convex, due to the extended Jensen's inequality
	\cite[Theorem 2.6]{LasserreConvex},
	it holds that $f^*\le f(\mL^*(x))\le \mathscr{L}^*(f)=f^*$. 
	Thus, $\mL^*(x)$ is a minimizer of $(\P)$. 
\end{proof}

\begin{remark}{\rm
			(i) Note that we do not require $\K$ to be compact in Theorem
				\ref{th::mainOP2} and \ref{th::mainsdp2}. 

			(ii) In the special case when $f(x),\ p(x,y)$ are linear
				in $x$ for every $y\in S$, the SDP relaxation
				\eqref{eq::sosconvext} agrees with the SDP
				relaxation of generalized problems of moments
				proposed in \cite{Lasserre2008}. 
}
	$\hfill\square$
\end{remark}

\begin{example}\label{ex::OP} Now we consider two convex semi-infinite polynomial
	programming problems using the sets $\K$ defined in Example
	\ref{ex::sdr} (\ref{ex::K1}) and (\ref{ex::K2}).
	Notice that the
	constraints in the dual SDP relaxations $(\ref{eq::f*rtdual})$ and 
	$(\ref{eq::sosconvextdual})$
	can be easily generated by {\sf Yalmip}. Hence, we solve the following problems
	using these corresponding dual SDP relaxations, which can also give
	us some informations on the minimizers of the problems. 
	\begin{enumerate}[(1).]
		\item Recall the sets $\K$ and $S$ defined in Example
			\ref{ex::sdr} (\ref{ex::K1})
			where the polynomial $p(x_1,x_2,y_1,y_2)\in\RR[x_1,x_2]$
			is convex but not s.o.s-convex for every $y\in S$. 
			\cite{APgap} constructed a polynomial
			\[
				\begin{aligned}
				\td{f}(x_1,x_2)=&89-363x_1^4x_2+\frac{51531}{64}x_2^6-\frac{9005}{4}x_2^5+
					\frac{49171}{16}x_2^4+721x_1^2-2060x_2^3-14x_1^3+\frac{3817}{4}x_2^2\\
					&+363x_1^4-9x_1^5+77x_1^6+316x_1x_2+49x_1x_2^3-2550x_1^2x_2-968x_1x_2^2
					+1710x_1x_2^4\\
					&+794x_1^3x_2+\frac{7269}{2}x_1^2x_2^2-\frac{301}{2}x_1^5x_2
					+\frac{2143}{4}x_1^4x_2^2+\frac{1671}{2}x_1^3x_2^3+\frac{14901}{16}x_1^2x_2^4\\
					&-\frac{1399}{2}x_1x_2^5-\frac{3825}{2}x_1^3x_2^2-\frac{4041}{2}x_1^2x_2^3-364x_2+48x_1.
				\end{aligned}
			\]
			$($see {\upshape \cite[(5.2)]{APgap}}$)$ which is
			convex but not s.o.s-convex. In order to illustrate the
			efficiency of the SDP relaxations $(\ref{eq::f*rtdual})$ better, 
			we shift and scale  $\td{f}$
			to define $f(x_1,x_2):=\td{f}(x_1-1,x_2-1)/10000$, which
			is still convex but not s.o.s-convex. Then, consider the
			problem $\min_{x\in\K} f(x_1,x_2)$, where $d_P=d_x=8$.
			Letting $r=t=4$, we get $f_{4,4}^{\dsdp}=0.15234$
			achieved at $\mL^{(4,4)}$ and an approximate minimizer 
			$\mL^{(4,4)}(x)=(0.4245,0.6373)$. To show the accuracy of
			the solution, we draw some contoure lines of $f$, including
			$f(x_1,x_2)=0.15234$, and mark the point
			$\mL^{(4,4)}(x)$ by red `$+$' in Figure \ref{fig::fmin1} (left). As we
			can see, the line $f(x_1,x_2)=0.15234$ is almost tangent
			to $\K$ at the point $\mL^{(4,4)}(x)$. 
		\item Recall the sets $\K$ and $S$ defined in Example
			\ref{ex::sdr} (\ref{ex::K2}). Let $f(x_1,x_2):=(x_1-1)^2+x_2^2$, i.e.,
			the square of the distance function of a point to $(1,0)$,
			and consider the problem $\min_{x\in\K} f(x_1,x_2)$. Then,
			the polynomials $f(x_1,x_2)$ and $-p(x_1,x_2,y_1,y_2)$ for
			all $y\in S$ are s.o.s-convex. As $d_{\K}=1$, solving the SDP
			relaxation $(\ref{eq::sosconvextdual})$ with $t=1$, we get
			$f_1^{\dsdp}=0.80942$ achieved at $\mL^{(1)}$ and an
			approximate minimizer 
			$\mL^{(1)}(x)=(0.1311,-0.2335)$. The corresponding
			contoures and the point $\mL^{(1)}(x)$ are shown in Figure
			\ref{fig::fmin1} (right). 

	\end{enumerate}
\begin{figure}
	\centering
	\caption{\label{fig::fmin1} The sets $\K$ and contoure lines of
	$f$ in Example \ref{ex::OP}.}
\scalebox{0.5}{
	\includegraphics[trim=150 220 150 220,clip]{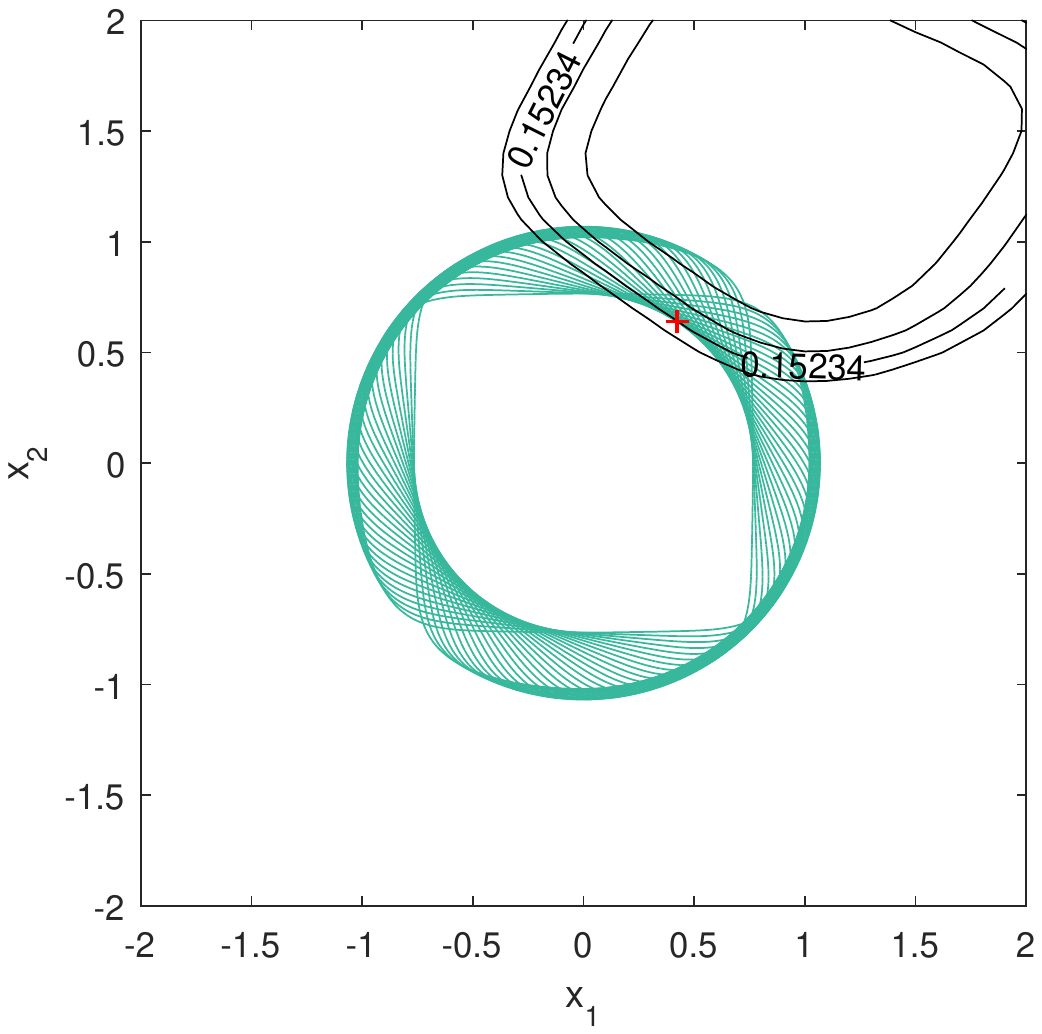}\qquad
	\includegraphics[trim=150 220 150 220,clip]{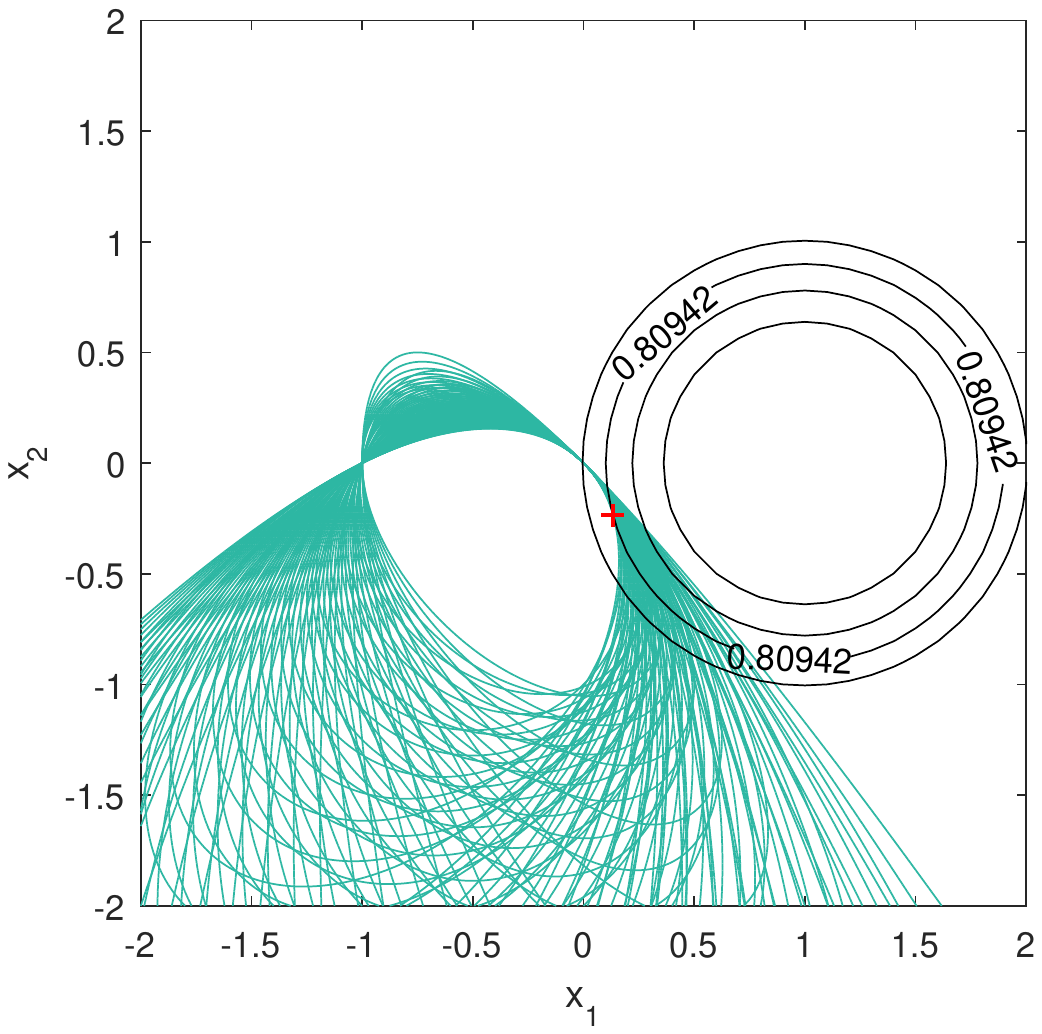}
}
\end{figure}
\end{example}

To end this section, we compare our SDP relaxation method for the
convex semi-infinite polynomial programming problem $(\P)$ with the approach
given in \cite{SIPPSDP}, which can also solve $(\P)$ via SDP
relaxations. In fact, the method proposed in \cite{SIPPSDP} is based
on the exchange scheme and works for semi-infinite polynomial
programming problems without requiring convexity. 

Generally speaking, given a
finite subset $S_k\subseteq S$
in an iteration, one obtains at least one global minimizer $u^{(k)}$ of
$f(x)$ under the associated finitely many constraints and then
compute the global minimum $p^k$ and minimizers
$y^{(1)},\ldots,y^{(l)}$ of
$p(u^{(k)},y)$ over $S$. If $p^k\ge 0$, stop; otherwise, update
$S_{k+1}=S_k\cup\{y^{(1)},\ldots,y^{(l)}\}$ and proceed to the next
iteration. Therefore, to guarantee the
success of the exchange method, the subproblems in each iteration
need to be globally solved and at least one minimizer of each
subproblem can be extracted.  The subproblems can be solved by
Lasserre's SDP relaxation method and minimizers can be
extracted when the flat extension condition holds. 

However, Lasserre's SDP
relaxation method for the lower level subproblem of  minimizing
$p(u^{(k)},y)$ over $S$ does not necessarily have finite
convergence. Even it does, a minimizer for the lower level subproblem
could not be extracted. In particular, when there are infinitely many
minimizers, the flat extension condition fails (c.f.
\cite[Sec. 6.6]{Laurent_sumsof}). 
For polynomial optimization problems with
generic coefficients data, 
according to \cite[Theorem 1.2]{NieFiniteCon} and 
\cite[Proposition 2.1]{NR2009}, 
there are finitely many minimizers and Lasserre's SDP relaxation
method has finite convergence. 
However, even the coefficients data in
$(\P)$ is generic, we are not clear about the success of the method
in \cite{SIPPSDP} applied to $(\P)$. 
It is because in the lower level subproblems, the coefficients of
$p(u^{(k)},y)$ depend on the solutions of the upper level subproblem
of the same interation.

For example, consider the problem
\[
	\left\{
	\begin{aligned}
		\min_{x\in\RR^2}&\ x_1^2+x_2^2\\
		\text{s.t.}&\ (x_1+x_2-1)(y_1^2+y_2^2)+(x_2-1)(y_3^2-1)\ge
	0,\\
	&\ \forall\ y\in\{y\in\RR^3\mid 1-y_1^2-y_2^2\ge 0,\ 1-y_3^2\ge
	0\}. 
	\end{aligned}\right.
\]
It is easy to see that the feasible set is $\{x\in\RR^2\mid
x_1+x_2-1\ge 0,\ 1-x_2\ge 0\}$ and the minimizer is
$(\frac{1}{2},\frac{1}{2})$. If we choose the intial set
$S_0=\{(0,0,0)\}$, then upper level subproblem has a unique minimizer
$u^{(0)}=(0,0)$. Then for the lower level subproblem of minimizing
$p(u^{(0)},y)=-y_1^2-y_2^2-y_3^2+1$ over $S$, it clear that the
solution set is $\{y\in\RR^3\mid y_1^2+y_2^2=1, y_3^2=1\}$ which is
infinite and the flat extension condition does not apply for Lasserre's SDP
relaxations. As none of the minimizers can be extracted, the method in
\cite{SIPPSDP} fails for this problem. Since the objective functions is
s.o.s-convex and Assumption \ref{ass2} holds, we can solve the above
problem by our SDP relaxations $(\ref{eq::sosconvextdual})$.  Let
$t=1$, we get $f_1^{\dsdp}=0.2500$ achieved at $\mL^{(1)}$ and an
approximate minimizer $\mL^{(1)}(x)=(0.5000,0.5000)$.

\section*{Acknowledgments}
The authors are very grateful for the comments of two anonymous
referees which helped to improve the presentation.
The first author was supported by
the Chinese National Natural Science Foundation under grants 11401074, 11571350.
The second author was supported by the Chinese National Natural
Science Foundation under grant 11801064, the Foundation of Liaoning Education
Committee under grant LN2017QN043.


\end{document}